\documentclass{mod}
\usepackage{url}
\usepackage{setspace}
\usepackage{scrextend}
\usepackage{cite}
\usepackage[shortlabels]{enumitem}
\usepackage{hyperref}
\usepackage{caption}
\usepackage{subcaption}
\usepackage{float}
\usepackage{tikz}
\usepackage[all]{xy}
\usepackage{tikz-cd}
\usepackage{amsmath}

\numberwithin{equation}{section}
\renewcommand{\theequation}{\thesection.\arabic{equation}}

\usepackage{mathtools}
\usepackage{amsthm}
\usepackage{amssymb}
\usepackage[dvipsnames]{xcolor}

\DeclareMathAlphabet{\mathbbm}{U}{bbm}{m}{n}

\usepackage{mathtools}
\usepackage{mathrsfs}

\usepackage[all]{xy}
\usepackage[toc,page]{appendix}
\usepackage{titlesec}
\usepackage{upgreek}

\usepackage{tocloft}

\usepackage{tocbasic}
\DeclareTOCStyleEntry[
beforeskip=.1em plus 1pt,
entryformat=\normalfont ,
pagenumberformat=\bfseries
]{tocline}{section}
\usepackage{etoolbox}
\patchcmd{\thebibliography}{%
	\section*{\refname}\@mkboth{\MakeUppercase\refname}{\MakeUppercase\refname}}{%
	\section*{\refname}}{}{}

\usepackage{ stmaryrd }

\newcounter{mainthm}

\newcounter{mainconj}

\newtheorem{thm}{Theorem}[section]

\theoremstyle{plain}
\newtheorem{lem}[thm]{Lemma}
\newtheorem{prop}[thm]{Proposition}
\newtheorem{cor}[thm]{Corollary}
\newtheorem{defn-thm}[thm]{Definition-Theorem}

\newtheorem{defn-lem}[thm]{Definition-Lemma}

\newtheorem{defn}[thm]{Definition}

\theoremstyle{definition}

\newtheoremstyle{rmk}
{5pt}
{5pt}
{}
{}
{\itshape}
{}
{.5em}
{}

\newtheorem{rmk}[thm]{Remark}

\newtheoremstyle{note}
{8pt}
{5pt}
{\itshape}
{10pt}
{\bfseries}
{}
{.5em}
{}

\theoremstyle{note}

\setlist[description]{font=
	\normalfont
	\itshape
	\space}

\usepackage{titletoc}

\titlecontents{section}
[0.72em]
{\scriptsize\bfseries\vspace{3pt}}%
{\thecontentslabel.\enspace}
{}
{\titlerule*[0.38pc]{.}\contentspage}%

\titlecontents{subsection}
[0em]
{\scriptsize \vspace{0pt}}%
{\qquad \quad \thecontentslabel.\enspace}
{}
{\titlerule*[0.5pc]{.}\contentspage}%

\titlecontents{subsubsection}
[0em]
{\footnotesize \vspace{1pt}}%
{\qquad \qquad \thecontentslabel.\enspace}
{}
{\titlerule*[0.5pc]{.}\contentspage}%

\titleformat{\subsubsection}[runin]{
	\bfseries
	\itshape\normalsize}{(\thesubsubsection) \ }{0em}{}[\mbox{ . } ]
\titlespacing{\subsubsection}
{0pt}
{2.25ex plus 1ex minus .2ex}
{0pt}

\newcommand{\id}{\mathrm{id}}

\newcommand{\one}{\mathds {1}}

\def\bar{\overline}
\def\hat{\widehat}

\def\^{\wedge}
\def\dscup{\dot\sqcup}

\def\C{\mathbb{C}}

\def\R{\mathbb{R}}
\def\Z{\mathbb{Z}}

\def\cH{\mathcal{H}}
\def\cM{\mathcal{M}}

\def\cE{\mathcal{E}}

\def\cO{\mathcal{O}}
\def\cH{\mathcal{H}}

\def\qq{\mathfrak{q}}
\def\ll{\mathfrak{l}}
\def\mm{\mathfrak{m}}
\def\one{\mathbf{1}}
\def\gg{\mathfrak{g}}

\def\d{\delta}

\def\e{\epsilon}

\def\k{\kappa}

\def\L{\Lambda}

\def\r{\rho}
\def\s{\sigma}

\def\O{\Omega}

\def\p{\partial}
\def\i{\iota}

\begin{document}
	\setlength{\parindent}{15pt}	\setlength{\parskip}{0em}

\title{Non-commutative calculus and Getzler-Gauss-Manin connections for Open-closed Homotopy Algebras}

\renewcommand{\thanks}[1]{\footnote{#1}}	

\author[Z. Yu]{Zekai Yu \thanks{\tiny Tsinghua University (Qiuzhen College), Beijing 100084, China. \quad Email: \texttt{yuzk23@mails.tsinghua.edu.cn}}}

\begin{abstract} {\textsc{abstract}: We establish the calculus structure on Hochschild invariants of open-closed homotopy algebras. We further define the Getzler-Gauss-Manin connection and show that it is flat up to chain homotopy on the open-closed periodic cyclic chain complex.}  

\end{abstract}

\maketitle

\section{Introduction}

Non-commutative calculus was introduced by Tamarkin and Tsygan \cite{tamarkintsy2005ring,GelfandDaletskiiTsygan1990,Tsygan2004} as a generalization of the classical Cartan calculus on a smooth manifold $M$. The latter is essentially an interplay among the algebra of smooth functions $A=C^\infty(M)$, the graded vector space of differential forms $\O^\bullet(M)$ and the graded commutative, graded Lie algebra of polyvector fields $\mathcal V^\bullet(M)$, with the following remarkable identities for vector fields $X, Y$ 
\begin{align}
    L_X=d\i_X+\i_Xd,\quad [L_X,\i_Y]=\i_{[X,Y]},\quad [L_X,L_Y]=L_{[X,Y]}\label{eq_cartan_id}
\end{align}
It was proven in \cite{HochschildKostantRosenberg1962} that the algebra of polyvector fields $\mathcal V^\bullet$ can be recovered from $A$ by the Hochschild cohomology of $A$, denoted $HH^\bullet(A,A)$, while the space of differential forms can be identified with the Hochschild homology of $A$, denoted $HH_\bullet(A,A)$. The natural goal of non-commutative calculus is hence to reconstruct the interactions between these Hochschild invariants from the datum of a (not necessarily commutative) associative algebra $A$, together with what is known as a Connes operator $B$ which is a non-commutative analogue of de Rham differential. The word "calculus" in particular implies the Cartan-type identities \eqref{eq_cartan_id}. The notion of a non-commutative calculus will be recalled precisely in Definition \ref{def_cal}.

%
In a similar vein, a unital $A_\infty$-algebra carries a calculus structure \cite{Tsygan2004,Getzler2002CARTANHF,chen2022note}. 
An $A_\infty$-algebra $A$ also admits the associated Hochschild cohomology $HH^\bullet(A,A)$ which carries a natural Gerstenhaber algebra structure. In particular, it is both a graded commutative algebra and a graded Lie algebra. The Hochschild homology $HH_\bullet(A,A)$ as a graded vector space is defined as well with a natural Connes operator $B$ satisfying $B^2=0$. There is a well-known calculus structure on the Hochschild invariants of $A$ \begin{prop}[\cite{chen2022note}, see also \cite{Getzler2002CARTANHF,Tsygan2004}]
    The Hochschild homology $HH_\bullet(A,A)$ carries 
    \begin{itemize}
        \item   a graded module structure over the graded commutative algebra $HH^\bullet(A,A)$;
        \item   a graded module structure over the graded Lie algebra $HH^\bullet(A,A)$
    \end{itemize}
    which are compatible, and further compatible with the Connes operator in the sense of Definition \ref{def_cal}. 
\end{prop}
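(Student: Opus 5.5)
We will build explicit chain-level operations on the Hochschild complexes and check the calculus identities up to explicit homotopies; passing to (co)homology then gives the statement. Let $A$ be a unital $A_\infty$-algebra with structure maps $m=\{m_k\}$, viewed as a coderivation of $BA=T(sA)$ with $[m,m]=0$. The cochain complex $C^\bullet(A,A)=\prod_k \mathrm{Hom}((sA)^{\otimes k},A)$ has differential $[m,-]$ (Gerstenhaber bracket of coderivations) and the cup product
\[
\phi\cup\psi = m(\ldots\phi\ldots\psi\ldots).
\]
The chain complex $C_\bullet(A,A)=A\otimes T(sA)$ (normalized, using the unit) has differential $b$ induced by $m$, and Connes' operator $B$ inserts the unit and sums over cyclic rotations. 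The steps are as follows.

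\textbf{Operations.} We define:
\begin{itemize}
  \item the contraction $\iota_\phi$, the Getzler-style cap product, given by
  \[
  \iota_\phi(a_0[a_1|\cdots|a_n])=\sum \pm\, m\bigl(a_0,\ldots,\phi(a_{i+1},\ldots),\ldots\bigr)[\ldots];
  \]
  \item the Lie derivative $L_\phi$, acting by inserting $\phi$ into all cyclically consecutive strings, so that $L_m=b$;
  \item the brace/``two-insertion'' operation $S_\phi$ of Getzler's Cartan homotopy formula (\cite{Getzler2002CARTANHF}).
\end{itemize}

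\textbf{Chain maps and module structures.} We first verify that $\iota$ and $L$ are chain maps in the sense $[b,\iota_\phi]=\iota_{[m,\phi]}$ and $[b,L_\phi]=L_{[m,\phi]}$. The second identity is immediate from $L$ being a Lie algebra map $\phi\mapsto L_\phi$ from coderivations to operators on Hochschild chains, since $[L_\phi,L_\psi]=L_{[\phi,\psi]}$ holds on the nose. Hence $L$ gives the Lie module structure on homology. For $\iota$, associativity $\iota_{\phi\cup\psi}=\iota_\phi\iota_\psi$ holds only up to a homotopy built from higher $m_k$ (the cup product is associative only up to homotopy for $A_\infty$). We write this homotopy explicitly as a sum over ways of inserting both $\phi$ and $\psi$ into a single $m_k$ together with $a_0$, and check that its $b$-boundary is the defect. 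Graded commutativity on $HH^\bullet$ follows from Gerstenhaber's classical homotopy.

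\textbf{Compatibilities.} It remains to establish the Cartan identities up to homotopy:
\[
[L_\phi,\iota_\psi]-\iota_{[\phi,\psi]}=[b,T(\phi,\psi)]+T(\ldots)
\]
for suitable $T$, and the Cartan magic formula
\[
[B,\iota_\phi]-L_\phi = [b,S_\phi]+S_{[m,\phi]}.
\]
The latter is Getzler's Cartan homotopy formula for $A_\infty$-algebras, proven by computing $[b+uB,\iota_\phi+uS_\phi]$ and comparing coefficients. The former we derive from the latter by taking commutators: applying $[-,\iota_\psi]$ and using the Jacobi identity expresses the defect through $[B,\iota_\phi\iota_\psi\pm\iota_{\phi\cup\psi}]$ and boundaries. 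Alternatively, we follow \cite{chen2022note} and use the noncommutative-calculus-up-to-homotopy formalism (a $\mathsf{calc}_\infty$-type package with explicit brace operations), which yields all identities at once.

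\textbf{Main obstacle.} We expect the main obstacle to be the sign bookkeeping and the verification of Getzler's Cartan homotopy formula, with $S_\phi$ involving the unit insertions and all the higher $m_k$. To make this tractable, we first use the normalized complex, so that terms with the unit in positions $\ge1$ vanish; the unit then contributes only through $B$, which reduces the verification to a careful matching of cyclic rotations against $b$-terms.
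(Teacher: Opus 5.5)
Most of your plan matches the paper's Section~\ref{sec_cal} specialized to $\gg=0$. This includes the cap product $\iota_\phi$ built with $m$ and $a_0$, and the module homotopy obtained by inserting both cochains into a single $m$ together with $a_0$ (the paper's $K(E,D)$). In the paper's conventions this homotopy relates $\iota_\phi\iota_\psi$ to $\pm\iota_{\psi\cup\phi}$, so the graded commutativity you invoke is what fixes the order. It also includes Getzler's Cartan homotopy $S_\phi$, checked via $[b+uB,\iota_\phi+uS_\phi]$.

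The gap is the identity $[L_\phi,\iota_\psi]\simeq\pm\iota_{[\phi,\psi]}$: the proposed derivation "from the Cartan formula via the Jacobi identity" cannot produce it. For cocycles $\phi,\psi$, work modulo operators of the form $[b,X]$, which are stable under $[B,-]$ and $[\iota_\psi,-]$ since $[B,b]=0$. Then
\[
[L_\phi,\iota_\psi]\simeq[[B,\iota_\phi],\iota_\psi]=[B,[\iota_\phi,\iota_\psi]]\pm[\iota_\phi,[B,\iota_\psi]]\simeq\pm[\iota_\phi,L_\psi],
\]
because $[\iota_\phi,\iota_\psi]$ is of the form $[b,X]$ and $[B,[b,X]]=-[b,[B,X]]$. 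The term $[\iota_\phi,[B,\iota_\psi]]$ that your sketch does not account for is the same unknown with $\phi$ and $\psi$ swapped. So all you learn is graded antisymmetry of $[L_\phi,\iota_\psi]$; the Gerstenhaber bracket never enters (there is not even a degree-compatible term like $[B,\iota_{\psi\{\phi\}}]$ that could supply it).

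Bringing in $[L_\phi,L_\psi]=L_{[\phi,\psi]}$ does not rescue this. Pushed through $L\simeq[B,\iota]$, it only controls $[B,\,[L_\phi,\iota_\psi]\mp\iota_{[\phi,\psi]}]$, not the operator itself. This is why the paper treats $[L_D,\iota_E]=(-1)^{|D|}\iota_{[D,E]}$ as an identity to be verified separately, whereas $L_{D\cup E}=L_D\iota_E\pm\iota_D L_E$ does follow formally.

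What you need is an explicit homotopy. The paper uses $T(\phi,\psi)$, which is $\iota_\psi$ with $m$ replaced by $\phi$: $\phi$ swallows a cyclic string containing $a_0$ and $\psi(\ldots)$. In the paper's conventions one checks by direct expansion that
\[
[L_\phi,\iota_\psi]-T(\delta\phi,\psi)+(-1)^{|\phi|}T(\phi,\delta\psi)+[b,T(\phi,\psi)]=(-1)^{|\phi|}\iota_{[\phi,\psi]} .
\]
In this expansion, the terms with $\phi$ and $\psi$ side by side inside $m$, or with $m$ nested inside $\phi$ or $\psi$, either cancel in pairs or assemble into the $m\{\phi\}$, $\phi\{m\}$ and $\psi\{m\}$ pieces of $T(\delta\phi,\psi)$ and $T(\phi,\delta\psi)$. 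The $m\{\phi\}$ piece contains one configuration produced nowhere else, namely $\psi$ nested inside $\phi$. That configuration, together with the term of $\iota_\psi L_\phi$ in which $\phi$ lands inside $\psi$, gives $\iota_{\phi\{\psi\}}\mp\iota_{\psi\{\phi\}}$, and this is exactly where the bracket comes from.

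Your alternative of invoking the $\mathsf{calc}_\infty$ package of \cite{chen2022note} is legitimate as a citation, but it cites the statement rather than supplying the missing step.
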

\vspace{1em}
With the Connes operator $B$, one can define the (periodic) cyclic homology $HP_\bullet(A)$ as a non-commutative analogue of de Rham cohomology $H^\bullet_{dR}(M;\R)$ on a smooth manifold. Just as the classical Cartan calculus underlies the Gauss-Manin connection and Kodaira-Spencer theory for algebraic or holomorphic families of complex manifolds, non-commutative calculus provides the corresponding framework for the Getzler-Gauss-Manin connection associated with families of $A_\infty$-algebras. In \cite{Getzler2002CARTANHF}, Getzler established the following
\begin{prop}
    There is a natural connection on the periodic cyclic chain complexes associated to a family of $A_\infty$-algebras, which is flat up to chain homotopy.
\end{prop}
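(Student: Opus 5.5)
We sketch the standard argument following Getzler. Let $A_t$ be a family of $A_\infty$-algebras over a base ring $R$ (say functions on a parameter space, or a formal disk). Its structure maps $m=m_t\in C^\bullet(A,A)\hat\otimes R$ satisfy $[m,m]=0$ in the Gerstenhaber bracket. The periodic cyclic complex is $C_\bullet(A)\hat\otimes R((u))$ with differential $b+uB$. Here $b=L_m$ is the Lie derivative along $m$, and $B$ is the Connes operator, independent of $t$.

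For a tangent vector $\partial$ on the base, the naive derivative $\partial$ acting on coefficients does not commute with $b$, since $[\partial,b]=L_{\partial m}$. The plan is to correct it using the calculus identities of the preceding proposition, lifted to the chain level. We take
\begin{align*}
\nabla_\partial=\partial-u^{-1}\,\iota_{\partial m},
\end{align*}
where $\iota$ is Getzler's chain-level contraction (the cap product) and $\partial m$ is a Hochschild cochain of degree one.

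The first step is to check that $\nabla_\partial$ is a chain map. This reduces to the chain-level Cartan homotopy formula
\begin{align*}
[b+uB,\ \iota_D]=u\,L_D-\iota_{[m,D]}+u\,S_D\ \text{-type correction terms},
\end{align*}
which Getzler establishes with an extra operator $I_D$ (built from cyclic insertions). We therefore set
\begin{align*}
\nabla_\partial=\partial-u^{-1}(\iota_{\partial m}+u I_{\partial m}),
\end{align*}
so that the exact identity
\begin{align*}
[b+uB,\ \iota_D+uI_D]=uL_D+\iota_{\delta D}+uI_{\delta D}
\end{align*}
holds, with $\delta=[m,-]$. Differentiating $[m,m]=0$ gives $[m,\partial m]=0$, so $\partial m$ is a Hochschild cocycle. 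Hence $[b+uB,\nabla_\partial]=L_{\partial m}-L_{\partial m}=0$.

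The second step concerns the curvature. For commuting vector fields $\partial_i,\partial_j$, the curvature $[\nabla_i,\nabla_j]$ expands as a sum of three kinds of terms:
\begin{itemize}
\item terms $u^{-1}(\iota_{\partial_i\partial_j m}-\iota_{\partial_j\partial_i m})$, which cancel;
\item the term $u^{-2}[\iota_{\partial_i m},\iota_{\partial_j m}]$;
\item mixed terms involving $I$.
\end{itemize}
The key input is the second Cartan identity $[L_D,\iota_E]=\iota_{[D,E]}$, together with graded commutativity of the cup product acting by $\iota$. These hold only up to homotopy at chain level, so we need a homotopy $H$ (Getzler's two-variable operator, or a brace-type operation $\iota_{\{D,E\}}$) with $[b+uB,H_{D,E}]$ equal to the curvature expression whenever $D,E$ are cocycles. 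Then $[\nabla_i,\nabla_j]=[b+uB,\,u^{-2}H_{\partial_i m,\partial_j m}]$, so the connection is flat up to chain homotopy.

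The main obstacle is this last step: writing down the homotopy $H$ explicitly as a sum over insertions of two cochains into a Hochschild chain, and verifying the higher Cartan identity with the correct signs and $u$-powers. This is the place where the $A_\infty$ (rather than associative) structure forces many extra terms. I would organize the computation using the brace operations and Getzler's formalism of the cyclic bar construction, so that the identities become formal consequences of the $A_\infty$ relation $[m,m]=0$.
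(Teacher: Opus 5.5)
Your first step is correct and matches the paper's Proposition-Definition (specialized to $\mathfrak{g}=0$): for $\nabla_\partial=\partial-u^{-1}(\iota_{\partial m}+uS_{\partial m})$, the term $[\partial,b]=\pm L_{\partial m}$ is cancelled by the Cartan homotopy formula $[b+uB,\iota_D+uS_D]=uL_D+\iota_{\delta D}+uS_{\delta D}$, because $\delta(\partial m)=0$.

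The gap is in the curvature step. This is the real content of the statement, and you only postulate the homotopy $H$. Your expansion of $[\nabla_i,\nabla_j]$ is also incomplete. The contraction $\iota_D$ of an $A_\infty$-algebra is built from $m$: it inserts $D$, together with $a_0$, into an outer $m$. Hence $\partial_i(\iota_{\partial_j m})\neq\iota_{\partial_i\partial_j m}$. There is an extra term where $\partial_i$ hits the outer $m$, namely the operator $\rho_{\{\partial_i m,\partial_j m\}}$ obtained from $\iota_{\partial_j m}$ by replacing its outer $m$ with $\partial_i m$. So besides your three kinds of terms, the curvature contains $\pm u^{-1}(\rho_{\{\partial_i m,\partial_j m\}}-\rho_{\{\partial_j m,\partial_i m\}})$, and a homotopy designed for your list would not match it.

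The inputs you name, $[L_D,\iota_E]\simeq\iota_{[D,E]}$ and commutativity of the cup product, hold only up to $b$-homotopy. At best they kill the leading term $u^{-2}[\iota_{\partial_i m},\iota_{\partial_j m}]$. They say nothing about whether the $u^{-1}$ remainder is again exact after correcting by $[B,-]$ of whichever $b$-homotopy you chose, and that is exactly what $(b+uB)$-exactness requires.

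What is missing is the mechanism that makes the $u^{-2}$ and $u^{-1}$ levels fit together. Following Getzler, the paper introduces three kinds of operators:
\begin{itemize}
\item $R_{\{D,E\}}=\iota_{\{D,E\}}+uS_{\{D,E\}}$, which inserts $D$ then $E$ into the outer $m$, respectively the $\mathbf{1}$-prefixed cyclic version;
\item $R_{D\bullet E}$, where $D\bullet E=D\{E\}$;
\item $\sigma(D,E)=R_{\{D,E\}}+(-1)^{|D||E|}R_{\{E,D\}}+R_{D\bullet E}$.
\end{itemize}
It then combines three chain-level identities:
\begin{itemize}
\item a Cartan formula for $R_{\{D,E\}}$, with error terms $R_E\circ R_D$, $R_{m\{D,E\}}$ and $uL_{\{D,E\}}$;
\item the formula $(\delta D)\bullet E-\delta(D\bullet E)+(-1)^{|D|}D\bullet\delta E=m\{D,E\}+(-1)^{|D||E|}m\{E,D\}$, which lets $R_{D\bullet E}$ absorb the $R_{m\{\cdot,\cdot\}}$ terms;
\item Getzler's identity $L_DL_E+\rho_{\{D,E\}}+(-1)^{|D||E|}\rho_{\{E,D\}}=L_{D\bullet E}+L_{\{D,E\}}+(-1)^{|D||E|}L_{\{E,D\}}$.
\end{itemize}
For cocycles $D,E$ these give
\[[b+uB,\sigma(D,E)]+(-1)^{|D|}[R_D,R_E]=u\bigl(L_DL_E+\rho_{\{D,E\}}+(-1)^{|D||E|}\rho_{\{E,D\}}\bigr).\]
The $\rho$-part of the right-hand side is precisely what cancels the terms you dropped. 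The term $L_DL_E$ is exact because $[b+uB,R_D]=uL_D$ and $L_E$ commutes with $b+uB$, so it is absorbed by a multiple of $R_{\partial_i m}\circ L_{\partial_j m}$. Thus the homotopy is $\sigma(\partial_i m,\partial_j m)$ plus that composite, not a single brace operation $\iota_{\{D,E\}}$. Without these identities your second step restates the claim rather than proving it.
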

On the other hand, an open-closed homotopy algebra, or OCHA, is a structure describing how an $L_\infty$-algebra deforms an $A_\infty$-algebra in the curved sense. Inspired by classical open-closed string field theory, it was made precise as an algebraic framework by Kajiura and Stasheff \cite{Kajiura_2006ocha,Kajiura_2006homalg}. OCHAs are also related to the operator $\qq$ appearing in the geometric theory of bulk deformations in the story of mirror symmetry between compact toric manifolds and Landau-Ginzburg models, as developed by Fukaya-Oh-Ohta-Ono \cite{fukaya_ms_compacttoric,Fukaya_toric1,fukaya_toric2,FOOO}. The literature on this subject is extensive, and we do not attempt to give a complete list of references here. 

It is then reasonable to expect that a unital OCHA, as an algebraic structure incorporating simultaneous open/closed deformations, admits a calculus analogous to that of a unital $A_\infty$-algebra, and that a family of OCHAs admits a notion of Getzler--Gauss--Manin connection. The latter is supposed to encode deformations of closed sectors as well. In this paper, we verify the calculus structure by explicit computations. Moreover, we construct the natural Getzler--Gauss--Manin connection for a family of unital OCHAs and show that it is flat up to chain homotopy.

In fact, for an OCHA, we can define analogously the notion of open-closed Hochschild cohomology $HH^\bullet(\gg;A,A)$ which carries a natural Gerstenhaber algebra structure \cite{yuan2024openclosedstringanaloguehochschild}, and the open-closed Hochschild homology $HH_\bullet(\gg;A,A)$ which is itself a graded vector space \cite{wang2025openclosedhochschildhomologyrelative}. There is also a Connes operator $B$ on Hochschild homology. We prove the following result in this paper


\begin{thm}[= Corollary \ref{cor_hom}]
    The Hochschild homology $HH_\bullet(\gg;A,A)$ carries 
    \begin{itemize}
        \item   a graded module structure over the graded commutative algebra $HH^\bullet(\gg;A,A)$;
        \item   a graded module structure over the graded Lie algebra $HH^\bullet(\gg;A,A)$
    \end{itemize}
    which are compatible, and further compatible with the Connes operator.
\end{thm}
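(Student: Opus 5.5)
The plan is to reduce the statement to the known calculus for unital $A_\infty$-algebras (the Proposition from \cite{chen2022note} recalled above), together with explicit chain-level operations. I would model the open-closed Hochschild cochain complex $C^\bullet(\gg;A,A)$ as $\mathrm{Hom}(S^c(\gg[1])\otimes T^c(A[1]),A)$. Its differential is the Gerstenhaber bracket with the OCHA structure element $\mathfrak m=\{\mathfrak l_k\}\cup\{\mathfrak n_{p,q}\}$, and its Gerstenhaber structure is that of \cite{yuan2024openclosedstringanaloguehochschild}. Dually, the open-closed Hochschild chains of \cite{wang2025openclosedhochschildhomologyrelative} would be $C_\bullet(\gg;A,A)=S^c(\gg[1])\otimes A\otimes T^c(A[1])$, with differential $b_{\mathfrak m}$, and the Connes operator $B$ would act only on the cyclic $A$-factor, inserting the unit.

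On these complexes I would define three chain-level operations for a cochain $\phi$: the cap product $\iota_\phi$, the Lie derivative $L_\phi$, and the Getzler--Tsygan brace operation $I_\phi$. Each is given by the familiar $A_\infty$ formulas, but with a shuffle over the symmetric $\gg$-inputs: the $\gg$-part of $\phi$ takes a sub-multiset of the closed inputs, and the remaining ones are left untouched. In particular, $L_\phi$ is $\phi$ inserted cyclically into the $A$-word, and $\iota_\phi$ is $\phi$ applied to the initial segment beginning at the distinguished slot.

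The key step is to observe that, after these shuffles, every identity one needs is formally the same as in the $A_\infty$ case. The chain-level identities to verify are:
\begin{align*}
[b_{\mathfrak m},L_\phi]&=L_{[\mathfrak m,\phi]},\\
[L_\phi,L_\psi]&=L_{[\phi,\psi]},\\
[L_\phi,\iota_\psi]-\iota_{[\phi,\psi]}&=\bigl[b_{\mathfrak m},I_{\phi,\psi}\bigr]+\ldots,\\
[b_{\mathfrak m},\iota_\phi]&=\iota_{[\mathfrak m,\phi]},\\
\iota_\phi\iota_\psi-\iota_{\phi\cup\psi}&=\text{homotopy},\\
L_\phi&=[B,\iota_\phi]+[b_{\mathfrak m},I_\phi]-I_{[\mathfrak m,\phi]}.
\end{align*}
The argument for transferring them is as follows. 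The symmetric coalgebra $S^c(\gg[1])$ is cocommutative. An OCHA is therefore equivalent to an $A_\infty$-structure on $A$ that is linear over the complete graded ring $\widehat{S}(\gg[1])^\vee$, with curvature coming from $\mathfrak n_{p,0}$. The $\mathfrak l_k$ add only a coderivation on that ring.

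So I would first treat the $\mathfrak l$-free part by base change: all the Getzler--Tsygan identities hold over any graded commutative base and for curved $A_\infty$-algebras, because the curvature terms cancel via the unit. I would then check separately that the $\mathfrak l$-terms in the differentials commute appropriately with $\iota$, $L$, $I$ and $B$. This holds because these operations act only on the $A$-factors, while the $\mathfrak l$-terms act only on the $\gg$-factors, so they graded-commute, with signs controlled by the Koszul rule.

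Passing to homology then gives the two module structures and their compatibilities:
\begin{align*}
[L_\phi,\iota_\psi]&=\iota_{[\phi,\psi]},\\
L_\phi&=[B,\iota_\phi].
\end{align*}
I expect the main obstacle to be the mixed terms. These are terms where $\mathfrak n_{p,q}$ with $p>0$ appears in $b_{\mathfrak m}$ while $\phi$ also consumes closed inputs, so the shuffles of $\gg$-inputs among $\phi$, $\mathfrak n$ and the spectator factor must match exactly on both sides of each homotopy formula. I would handle this by writing every operation as a coderivation or comodule map over $S^c(\gg[1])$, reducing each identity to a comodule-level identity that can be checked on cogenerators. Unitality, in the normalized complex, is what makes the curvature contributions from $\mathfrak n_{p,0}$ vanish in the Cartan homotopy formula, so I would work with normalized chains throughout.
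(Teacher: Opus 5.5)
\paragraph{Verdict.} Your proposal has a genuine gap at the step where you handle the $\mathfrak{l}$-terms separately.

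\paragraph{The $\mathfrak{l}$-terms do not commute with your operations.} You claim that the closed part $b_C=-\sum\pm\mathfrak{l}(y_{I_1})\wedge y_{I_2}\otimes x_{[d]}$ of the differential graded-commutes with $\iota_\phi$, $L_\phi$ and $I_\phi$, because these ``act only on the $A$-factors''. By your own definition they do not. The cochain $\phi$ consumes a sub-multiset of the closed inputs. Moreover, $\iota_\phi$ contains an outer $\mathfrak{q}(y_{I_2};\dots)$ that also consumes closed inputs, and so do the homotopies for the cup-product identity and the $[L_\phi,\iota_\psi]$ identity.

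Consequently $\iota_\phi\circ b_C$ contains terms in which an output $\mathfrak{l}(y_I)$ is fed into $\phi$ or into the outer $\mathfrak{q}$. These have no counterpart in $b_C\circ\iota_\phi$. Concretely:
\begin{itemize}
\item $[b_C,L_\phi]_\circ=\pm L_{\widehat{\mathfrak l}(\phi)}$;
\item $[b_C,\iota_\phi]_\circ$ is $\pm\iota_{\widehat{\mathfrak l}(\phi)}$ plus terms of the form $\mathfrak{q}(\mathfrak{l}(y_{I_2})\wedge y_{I_3};\dots)$; these are \eqref{i3.13} and \eqref{i3.12} in the paper.
\end{itemize}
Only $B$ genuinely commutes with $b_C$. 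Your own list of identities already contradicts the claim: $[b,L_\phi]_\circ=L_{\delta\phi}$ with $\delta\phi$ containing the summand $(-1)^{|\phi|}\widehat{\mathfrak l}(\phi)$ forces $[b_C,L_\phi]_\circ\neq 0$.

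\paragraph{The base-change step fails for the same reason.} The $\mathfrak{l}$-free part is not a curved $A_\infty$-algebra over the graded commutative ring $\widehat S(\mathfrak g[1])^\vee$. The OCHA relation is $\mathfrak{q}\{\mathfrak{q}\}=\widehat{\mathfrak l}(\mathfrak{q})$, not $\mathfrak{q}\{\mathfrak{q}\}=0$. So $\mathfrak{q}$ is a Maurer--Cartan element only over the dg ring $(\widehat S(\mathfrak g[1])^\vee,d_{CE})$, and the Getzler--Tsygan identities cannot be imported by plain base change.

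\paragraph{How to repair it.} The two defects cancel each other, which is why your target identities are nevertheless true. They must, however, be treated together:
\begin{enumerate}
\item Run the $A_\infty$ computation without discarding the $\mathfrak{q}\{\mathfrak{q}\}$-terms it produces.
\item Compute the commutators with $b_C$ honestly.
\item Use $\mathfrak{q}\{\mathfrak{q}\}=\widehat{\mathfrak l}(\mathfrak{q})$ to cancel the $\mathfrak{q}\{\mathfrak{q}\}$-defect against the $\mathfrak{l}$-terms that land in the outer $\mathfrak{q}$. The $\mathfrak{l}$-terms that land in $\phi$ then supply the $\widehat{\mathfrak l}(\phi)$ summand of $\delta\phi$.
\end{enumerate}
This is exactly the mechanism in the paper's direct verification: see \eqref{i3.12} cancelling \eqref{i3.14}, and \eqref{cap3.27} in the cup-product homotopy.

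Alternatively, you could make your reduction rigorous by proving the calculus identities for curved $A_\infty$-algebras over a commutative dg base. There $d_{CE}$ acts as a derivation on all coefficients, including those of the structure maps hidden inside $\iota_\phi$ and the homotopies. But that extension is the real content of the theorem, and it is not supplied by the graded-commutative case you invoke.
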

\vspace{.5em}
 In particular, the Hochschild invariants of an OCHA carry a calculus structure.
 
Analogous to the case of $A_\infty$-algebras, also as an application of the calculus structure we have developed, we define the Getzler-Gauss-Manin connection on the open-closed periodic cyclic chain complexes of a family of open-closed homotopy algebras,  by computations similar to those in \cite{Getzler2002CARTANHF}
\begin{thm}[= Theorem \ref{thm_GGM}]
   There is a natural Getzler-Gauss-Manin connection on the periodic cyclic chain complexes associated to a family of OCHAs, which is flat up to chain homotopy.
\end{thm}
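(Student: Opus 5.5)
I plan to follow Getzler's approach for $A_\infty$-algebras, applied to the open-closed Hochschild chain complex. A family of OCHAs over a base with coordinates $t$ is given by structure maps $\mathfrak{n}(t)=\{\ell_k(t),\, \mathfrak{n}_{k,l}(t)\}$ satisfying $[\mathfrak{n}(t),\mathfrak{n}(t)]=0$ in the open-closed Gerstenhaber bracket. Differentiating in $t$ gives $\partial_t\mathfrak{n}$, which is a Hochschild cochain in $C^\bullet(\gg;A,A)$. The identity $[\mathfrak{n},\partial_t\mathfrak{n}]=0$ shows that it is a cocycle: it is the Kodaira--Spencer class of the family, and it records both open and closed deformations. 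The periodic cyclic complex is $CC^{per}_\bullet(\gg;A)=(C_\bullet(\gg;A,A)((u)),\, b_{\mathfrak{n}}+uB)$, where $b_{\mathfrak{n}}=L_{\mathfrak{n}}$ is the Lie derivative along the structure cochain and $B$ is the Connes operator. I will define
\[
\nabla_{\partial_t}=\partial_t - u^{-1}\,\iota_{\partial_t\mathfrak{n}},
\]
where $\iota$ is the cap-product (contraction) action of cochains on chains coming from the calculus structure of Corollary \ref{cor_hom}. If it helps to match Getzler's version, I will add the higher term $-u^{-1}S_{\partial_t\mathfrak{n}}$ built from the Getzler--Cartan homotopy.

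The first step is to check that $\nabla$ is a chain map, that is, $[b_{\mathfrak{n}}+uB,\nabla_{\partial_t}]=0$. The term $[\partial_t, b_{\mathfrak{n}}]$ equals $L_{\partial_t\mathfrak{n}}$, because $b_{\mathfrak{n}}$ depends linearly on $\mathfrak{n}$ and the chain-level formulas do not depend on $t$ otherwise. The remaining contribution is
\[
u^{-1}[b_{\mathfrak{n}},\iota_{\partial_t\mathfrak{n}}]+[B,\iota_{\partial_t\mathfrak{n}}].
\]
I need the chain-level Cartan formula, in the form given by the calculus statement:
\[
[b_{\mathfrak{n}}+uB,\ \iota_D+uS_D]=u\,L_D+\iota_{\delta D}+uS_{\delta D},
\]
where $S_D$ is the Getzler homotopy. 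The computations of the calculus section should supply this identity for the open-closed complex. The insertions of the closed inputs from $\gg$ are symmetric, and they pass through the $L_\infty$ part without disturbing the $A_\infty$ combinatorics. Since $\delta(\partial_t\mathfrak{n})=0$, replacing $\iota$ by $\iota+uS$ in the definition of $\nabla$ makes $\nabla$ commute with the differential exactly.

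The second step is flatness up to homotopy. For two parameters $s,t$ the curvature is
\[
[\nabla_s,\nabla_t]=-u^{-1}\big(\iota_{\partial_s\partial_t\mathfrak{n}}-\iota_{\partial_t\partial_s\mathfrak{n}}\big)+u^{-2}\,[\iota_{\partial_s\mathfrak{n}}+uS_{\partial_s\mathfrak{n}},\ \iota_{\partial_t\mathfrak{n}}+uS_{\partial_t\mathfrak{n}}].
\]
The first term vanishes by symmetry of mixed partials. For the second term I need the other calculus identity: $[\iota_D,\iota_E]$, and more generally the commutator of the $\iota+uS$ operators, equals a bracket with the differential plus terms involving $\iota_{[D,E]}$ and $\iota_{D\cup E}$. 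For the cocycles $D,E$ these are homotopically trivial. So $[\nabla_s,\nabla_t]=[b_{\mathfrak{n}}+uB,\,H_{s,t}]$ for an explicit $H_{s,t}$ built from the brace-type operations $\iota_{D,E}$ of Getzler, extended to open-closed chains. Naturality is checked by the same identities applied to a change of coordinates $t\mapsto t'$.

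I expect the main obstacle to be the open-closed version of Getzler's higher homotopy formula, namely the identity for the commutator of $\iota_D+uS_D$ and $\iota_E+uS_E$ with an explicit two-variable homotopy. In the OCHA setting the Hochschild cochains carry symmetric $\gg$-inputs, and the differential mixes these with the $L_\infty$ brackets $\ell_k$. The sign and shuffle bookkeeping for the $\gg$-slots is therefore the delicate part. My plan for this step is to treat a cochain as a $\mathrm{Sym}(\gg[1])$-coalgebra-comodule coderivation, so that all $\gg$-insertions become a single coproduct-splitting step. This reduces the verification to Getzler's $A_\infty$ identities tensored with the cocommutative coproduct on $\mathrm{Sym}(\gg[1])$. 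I will check first that this reduction is compatible with the contraction pairing.
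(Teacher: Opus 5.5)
The first step, showing that $\nabla$ is a chain map, is the paper's argument. It uses $[\partial_i,b_t]_\circ=\mp L_{\partial_i\mathfrak{q}_t}$, the Cartan homotopy formula of Remark~\ref{rmk_cart_hom}, and $\delta(\partial_i\mathfrak{q}_t)=0$. One small correction: the extra term is $-u^{-1}(\iota+uS)=-u^{-1}\iota-S$, not $-u^{-1}S$.

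\textbf{Gap 1: flatness.} The contraction $\iota_D$ is \emph{not} independent of $t$, because its outer operation is the structure map itself: $\iota_D(y;x)=\pm\sum y_{I_1}\otimes\mathfrak{q}(y_{I_2};x_{J_5}\otimes x_{J_1}\otimes D(y_{I_3};x_{J_2})\otimes x_{J_3})\otimes x_{J_4}$. By the same linearity you invoke for $b$, differentiating $R_{\partial_j\mathfrak{q}_t}$ in $t_i$ gives two terms. One is $R_{\partial_i\partial_j\mathfrak{q}_t}$. The other is $\rho_{\{\partial_i\mathfrak{q}_t,\partial_j\mathfrak{q}_t\}}$, in which $\partial_i\mathfrak{q}_t$ replaces the outer $\mathfrak{q}$.

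So the curvature is \eqref{curv4.9}: with $D=\partial_i\mathfrak{q}_t$ and $E=\partial_j\mathfrak{q}_t$ it equals $u^{-1}(\rho_{\{D,E\}}-\rho_{\{E,D\}})+u^{-2}[R_D,R_E]_\circ$. These $\rho$-terms are not symmetric in $D,E$, so equality of mixed partials does not remove them. Your formula drops them.

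Without them, the remaining term cannot be handled as you propose. Theorem~\ref{thm} controls only the $u^{-2}$-coefficient $[\iota_D,\iota_E]_\circ$, via $\iota_{E\cup D}\simeq\pm\iota_D\iota_E$. It says nothing about the $u^{-1}$-coefficient $[\iota_D,S_E]_\circ+[S_D,\iota_E]_\circ$. The actual residual is also not built from $\iota_{[D,E]}$ and $\iota_{D\cup E}$.

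What the paper proves is \eqref{sigma4.11}. It needs the two-variable operators $R_{\{D,E\}}$, $L_{\{D,E\}}$, $\rho_{\{D,E\}}$, the analogue of Getzler's Lemma 2.3, Lemma~\ref{lem_bullet} and Lemma~\ref{lem_RL}. For closed $D,E$ it reads $[b+uB,\sigma(D,E)]_\circ+(-1)^{|D|}[R_D,R_E]_\circ=u\bigl(L_D\circ L_E+\rho_{\{D,E\}}+(-1)^{|D||E|}\rho_{\{E,D\}}\bigr)$, where $\sigma(D,E)=R_{\{D,E\}}+(-1)^{|D||E|}R_{\{E,D\}}+R_{D\bullet E}$.

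Consequently $u^{-2}[R_D,R_E]_\circ$ is exact only modulo $u^{-1}(\rho_{\{D,E\}}-\rho_{\{E,D\}})$. That remainder is cancelled exactly by the derivative-of-$\iota$ terms you omitted, and $L_D\circ L_E$ is exact via $R_D\circ L_E$. You named the right obstacle, but with your curvature formula the cancellation does not close.

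\textbf{Gap 2: the bulk $L_\infty$-structure must be held fixed}, as the paper does. Your family $\mathfrak{n}(t)=\{\ell_k(t),\mathfrak{n}_{k,l}(t)\}$ lets it vary. Then $\partial_t\ell$ is not an element of $C^\bullet(\mathfrak{g};A,A)$, whose cochains are $A$-valued, so it has no contraction and no Cartan formula. Also, $[\partial_t,b_t]_\circ$ picks up a term inserting $\partial_t\ell$ into the $\mathfrak{g}$-factor, which is not of the form $L_D$.

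In that generality the statement is false. Take $A=\mathbb{C}$, $\mathfrak{g}_t=\mathrm{span}(x,y)$ with $[x,y]=ty$, and $\mathfrak{q}_{n,m}=0$ for $n>0$. Then $B=0$ and $b$ is the Chevalley--Eilenberg boundary. The periodic cyclic homology over $\mathbb{C}[[t]]$ then contains the nonzero $t$-torsion class $[y]$, so it admits no connection at all.

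With $\mathfrak{l}$ fixed, two further corrections follow. The relation to differentiate is $\mathfrak{q}\{\mathfrak{q}\}=\widehat{\mathfrak{l}}(\mathfrak{q})$, not $[\mathfrak{n},\mathfrak{n}]=0$. And $b=b_O+b_C$, where $b_O=\pm L_{\mathfrak{q}}$ and $b_C$ is the fixed $\mathfrak{l}$-term. In that setting your $\mathrm{Sym}(\mathfrak{g}[1])$-comodule reduction is a sensible, more conceptual way to organize the $\mathfrak{l}$-cancellations the paper checks by hand. The reason is that $\widehat{\mathfrak{l}}$ then acts as a base differential that is a derivation of all the brace operations.
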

\vspace{.5em}
We now briefly discuss potential applications and future directions. Non-commutative calculus and Getzler-Gauss-Manin connections play an important role in homological mirror symmetry and non-commutative Hodge theory. By incorporating the bulk $L_\infty$-algebras, OCHAs may provide a richer framework for the story. The structures studied here also point toward applications beyond the standard mirror-symmetric setting. For Calabi--Yau categories and cyclic open--closed homotopy algebras, they provide a natural language for BV structures, noncommutative Poincaré duality, and open--closed topological field theories. 

In the scenario of mirror symmetry between compact toric varieties and Landau-Ginzburg models, we expect a structure that is similar to OCHAs on the B-side by Kontsevich's formality result for the differential graded Lie algebra structure on the Hochschild cochain complex. In representation-theoretic directions, one may also hope for braided or quantum-group-equivariant versions of open-closed calculus, as Hochschild and cyclic invariants admit braided analogues for algebra objects in braided monoidal categories. These possibilities suggest that open--closed calculus structures, together with their Getzler-Gauss-Manin connections, form a useful bridge between homotopy algebra, Calabi--Yau geometry, mirror symmetry, and quantum algebra. 

This paper is organized as follows. In Section \ref{sec_pre} we present preliminaries and background. In particular, we fix our sign conventions and recall the relevant structures associated with OCHAs. We also set up a sign rule to simplify the practical computations involving signs. In Section \ref{sec_cal} we construct the calculus structure on OCHAs by explicitly writing down the operators and verifying the required identities. In Section \ref{sec_ggm} we establish the results on Getzler--Gauss--Manin connections for a family of OCHAs.
 
\subsection*{Acknowledgement} The author would like to thank Hang Yuan for proposing this problem, for carefully reading the manuscript and for giving valuable suggestions. He would also like to thank Si Li for useful discussions on the subject of cyclic theory. The author acknowledges the assistance of OpenAI's GPT-5.6 Thinking in pointing out several subtleties and suggesting the filtration-based proof in place of an earlier normalization argument. 

\section{Preliminaries} \label{sec_pre}

 Background can be found in \cite{yuan2024openclosedstringanaloguehochschild,wang2025openclosedhochschildhomologyrelative}, see also \cite{yuan2025cyclicbracerelationbv}.  We fix a field $k$ of characteristic zero. We use notation $[a]$ to denote the ordered set $\{1,...,a\}$. We use notation $I_1\sqcup I_2\sqcup ... I_n=[a]$ to denote the partition of $[a]$ into $n$ disjoint subsets where each block $I_j$ is endowed with strictly increasing order induced from $[a]$. We write $J_1\dscup J_2\dscup ...J_m=[d]$ to denote an order-preserving partition of $[d]$; this means that each $J_j$ is a consecutive interval in $[d]$, and the ordered concatenation of the blocks recovers the original order on $[d]$. A priori, each block could be empty. In this notation, a tensor $x_{[d]}\in V^{\otimes d}$ could be written as $x_{J_1}\otimes ...\otimes x_{J_k}$, where $J_1\dscup J_2...\dscup J_k=[d]$.

We follow the grading convention as in \cite{keller2001introductionainfinityalgebrasmodules,yuan2024openclosedstringanaloguehochschild}. We use cohomological degree for Hochschild chain complexes. We make our sign convention clear. The degree of an element $x$ of a graded vector space $V$ is denoted $\deg(x)$. A homogeneous multi-linear map $D:V^{\otimes n}\to V$ admits its own notion of degree $\deg (D)$ such that the total degree is additive
\[\deg(D)+\sum_{i=1}^d\deg(y_i)=\deg(D(y_1\otimes  ...\otimes y_d))\]
This renders a grading on $\hom(V^{\otimes n},V)$. The graded symmetric case is similar, just replacing $V^{\otimes n}$ by $V^{\wedge n}$. In the current setup, we frequently use the notion $|\cdot|$ of shifted degree, $|x|=\deg(x)-1$. In other words, it is the degree of the suspension $sx\in V[1]$. An element $a\in V^{\otimes n}$ has shifted degree $|a|=\deg(a)-n$. A homogeneous multi-linear map $D$ is also endowed with the shifted degree $|D|$ such that the total shifted degree is additive
\[|D|+\sum |y_i|=|D(y_1\otimes ...\otimes y_d)|\]
Hence we conclude that $|D|=\deg(D)+d-1$. In this paper, we will only make use of the shifted degree of an element of a graded vector space $V$ or an operator thereof, which will be the graded vector space underlying an $A_\infty$-algebra $A$, an $L_\infty$-algebra $\gg$, or an open-closed homotopy algebra (OCHA) $(\gg,A)$.

\subsection{Background on open-closed homotopy algebras}
The notion of OCHA was introduced by Kajiura and Stasheff in \cite{Kajiura_2006homalg,Kajiura_2006ocha}. An OCHA is a tuple of structures $(\gg,A;\ll,\qq)$, where $(\gg,\ll)$ is an $L_\infty$-algebra, in the sense that $\gg$ is a graded vector space, and there is a sequence of multi-linear maps 
\[\ll_n:\gg^{\wedge n}\to \gg,\quad n\in \Z_+\]
of shifted degree $1$, satisfying the $L_\infty$-relation: For any $y_{[a]}\in \bigoplus_n \gg^{\wedge n}$,  
\[\sum_{I_1\sqcup I_2=[a]}(-1)^\e \ll_{|I_1|+|I_2|} (\ll_{|I_1|}(y_{I_1})\wedge y_{I_2})=0\]
where $\e$ is determined by the equality $y_{[a]}=(-1)^\e y_{I_1}\wedge y_{I_2}$. In a similar way, $(A,\qq_{0,-})$ is an $A_\infty$-algebra in the sense that $A$ is a graded vector space and there is a sequence of multi-linear maps 
\[\qq_{0,m}:A^{\otimes m}\to A,\quad m\in \Z_+\]
of shifted degree 1, satisfying the $A_\infty$-relation: For any $x_{[d]}\in \bigoplus_m A^m$, 
\[\sum _{J_1\dscup J_2\dscup J_3=[d]}(-1)^\ast \qq_{0,|J_1|+|J_3|+1}(x_{J_1}\otimes \qq_{0,|J_2|}(x_{J_2})\otimes x_{J_3})=0\]
where $\ast=|x_{J_1}|$ is given by the Koszul sign rule for the shifted degree. 

Given these structures, an OCHA consists of the additional data of $\qq_{n,m}:\gg^{\^ n}\otimes A^{\otimes m}\to A$ for $n>0,m\geq 0$, all of shifted degree $1$, satisfying the OCHA relation
\begin{align}
    \sum _{\substack{I_1\sqcup I_2=[a] \\J_1\dscup J_2\dscup J_3=[d]}}(-1)^{\e}\qq(\ll(y_{I_1})\wedge y_{I_2};x_{[d]})=\sum_{\substack{I_1\sqcup I_2=[a] \\J_1\dscup J_2\dscup J_3=[d]}} (-1)^{\clubsuit}\qq(y_{I_1};x_{J_1}\otimes\qq(y_{I_2};x_{J_2})\otimes x_{J_3})
\end{align}
where the sign is determined by Koszul rule: $y_{[a]}=(-1)^\e y_{I_1}\wedge y_{I_2}$, $\clubsuit=\e+|y_{I_2}||x_{J_1}|+|x_{J_1}|+|y_{I_1}|$. We emphasize that the $\qq_{n,0}$ terms make this structure nontrivial: the closed sector could potentially introduce a curvature to the $A_\infty$-algebra as indicated in \cite{fukaya_ms_compacttoric,fukaya_toric2}. We have suppressed the subscript of structural morphisms when the input is clear; in other words, we take $\qq=\sum_{n,m}\qq_{n,m}$. A semi-colon is placed to separate the closed ($L_\infty$) inputs and the open $(A_\infty)$ inputs of the structural morphism $\qq$. This relation is sometimes abbreviated as $\hat \ll (\qq)=\qq\{\qq\}$, using the brace notation, see\cite{yuan2024openclosedstringanaloguehochschild}.

A unit plays a special role. We recall that a unit in an $A_\infty$-algebra $A$ is an element $\one$ such that $\mm_k(a_1,\one,a_2)=0$ for $k> 2$, $\mm_1(\one)=0$, $\mm_2(\one,a)=a$, $\mm_2(a,\one)=(-1)^{|a|+1}a$. Slightly abusing notation, OCHA has a unit $\one$ in the sense that $\qq_{n,k}(y;x_1,\one,x_2)=0$ for $k>2,n\geq 0$, $\qq_{n,1}(y;\one)=0$ for $n\geq 0$, and $\qq_{n,2}(y;\one,x)=\qq_{n,2}(y;x,\one)=0$ for $n>0$. The only non-vanishing item is $\qq_{0,2}(\one,x)=(-1)^{|x|+1}\qq_{0,2}(x,\one)=x$.

\subsection{Open-closed Hochschild homology and cohomology}

For an OCHA $(\gg,A)$, we define the normalized open-closed Hochschild chain complex $C_\bullet(\gg;A,A)\coloneqq \bigoplus_{l,k\geq 0}\gg^{\wedge l}\otimes A\otimes \bar A^{\otimes k}$, where $\bar A=A/k\cdot \one $. The grading is given by the shifted degree $|-|$, or in other words the natural grading on $\bigoplus_{l,k\geq 0}\gg[1]^{\wedge l}\otimes A[1]\otimes \bar A[1]^{\otimes k}$. We also define the un-normalized Hochschild chain complex, $C^{un}_\bullet(\gg;A,A)\coloneqq \bigoplus_{k,l\geq 0} \gg[1]^{\wedge l}\otimes A[1]^{\otimes (k+1)}$ with the natural grading. There is an operator of degree 1 as follows

\begin{defn}
    $b:C^{un}_\bullet(\gg;A,A)\to C^{un}_{\bullet+1}(\gg;A,A)$ 
     \begin{align}
         b(y_{[a]}\otimes x_{[d]})\coloneqq &\sum_{\substack{L\dscup M\dscup R=[d],L\neq \phi\\I_1\sqcup I_2=[a]}}(-1)^{\ast_1} y_{I_1}\otimes x_L\otimes \qq(y_{I_2};x_M)\otimes x_R+\sum_{\substack{L\dscup M\dscup R=[d],L\neq \phi\\I_1\sqcup I_2=[a]}}(-1)^{\ast_2}y_{I_1}\otimes \qq(y_{I_2};x_R\otimes x_L)\otimes x_M\\ -&\sum_{I_1\sqcup I_2=[a]}(-1)^\e \ll(y_{I_1})\wedge y_{I_2}\otimes x_{[d]}
     \end{align}
    Here $\ast_1=\e+|x_L|+|y_{I_1}|+|y_{I_2}||x_L|$, $\ast_2=\e+|y_{I_1}|+|x_R|(|x_M|+|x_L|)$, $\e$ is determined by $y_{[a]}=(-1)^\e y_{I_1}\wedge y_{I_2}$. 
\end{defn}
Note that the index set $M$ could be empty. It can be proved that $b^2=0$, see also \cite{wang2025openclosedhochschildhomologyrelative}, hence $(C^{un}_\bullet,b)$ is a complex. The cohomology of this complex is called open-closed Hochschild homology, $HH_\bullet(\gg;A,A)\coloneqq H^\bullet(C^{un}_\bullet(\gg;A,A),b)$. However we will mostly work with the normalized Hochschild chain complex.
\begin{prop}
    The Hochschild differential $b$ defined above descends to a differential $\bar b$ on $C_\bullet(\gg;A,A)$. In fact, there is a chain map $\pi:C^{un}_\bullet(\gg;A,A)\to C_\bullet(\gg;A,A)$ that fits in the following commutative diagram.
\[\xymatrix{
C^{un}_\bullet(\gg;A,A) \ar[r]^{b} \ar[d]_{\pi}  & C^{un}_\bullet(\gg;A,A) \ar[d]^{\pi} \\
C_\bullet(\gg;A,A) \ar[r]_{\bar b}  & C_\bullet(\gg;A,A)}\]
\end{prop}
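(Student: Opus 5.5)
The map $\pi$ will be the obvious quotient map: it is the identity on the closed factor $\gg^{\wedge l}$ and on the first open factor $A$, and it applies the projection $A\to\bar A=A/k\cdot\one$ to each of the remaining $k$ open factors. This map is surjective and has degree $0$, since the shifted grading on both complexes is induced in the same way. Its kernel $D\subset C^{un}_\bullet(\gg;A,A)$ is spanned by the elementary tensors $y_{[a]}\otimes x_0\otimes x_1\otimes\cdots\otimes x_k$ in which $x_j=\one$ for some $j\geq 1$; these are the degenerate chains. Once we show $b(D)\subset D$, the differential $\bar b$ is defined by $\bar b(\pi c):=\pi(b c)$. This is well defined by the inclusion, the square commutes by construction, and $\bar b^2=0$ follows from $b^2=0$ together with the surjectivity of $\pi$.

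It therefore suffices to check that $b$ maps each generator $c=y_{[a]}\otimes x_{[d]}$ with $x_j=\one$, $j\geq 2$ (that is, $\one$ is not in the first slot $x_1$ of $x_{[d]}$), into $D$. We split $b(c)$ into its three sums and sort the terms by where the distinguished $\one$ ends up.

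\begin{enumerate}
\item \emph{The $\ll$-term.} Here only the closed inputs change, so the $\one$ stays in a non-initial open slot and the term lies in $D$.
\item \emph{Terms not containing $\one$ as an input.} In the first sum with $x_j\notin x_M$, and in the second sum with $x_j\in x_M$, the $\one$ survives in a non-initial position, so these terms lie in $D$.
\item \emph{First sum with $x_j\in x_M$.} Since $L\neq\emptyset$, the output $\qq(y_{I_2};x_M)$ sits in a non-initial slot. By the unit axioms, $\qq(y_{I_2};x_M)$ vanishes unless $(|I_2|,|M|)=(0,1)$ with $x_M=\one$, or $(|I_2|,|M|)=(0,2)$ with $x_M=(\one,x)$ or $(x,\one)$. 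In the first case the output is $\qq_{0,1}(\one)=0$. The second case leaves a term $\pm\, y_{[a]}\otimes\cdots\otimes x_{j\pm1}\otimes\cdots$.
\item \emph{Second sum with $x_j\in x_R$ or $x_j\in x_L$.} Here the output is the wrapped-around term $\qq(y_{I_2};x_R\otimes x_L)$ in the first slot. By the unit axioms again, only $\qq_{0,2}$ with the $\one$ adjacent to one other input survives. These arise when $R=\{d\}$, $x_d=\one$, $L=\{1\}$, or when $L=\{1,2\}$, $x_2=\one$, $R=\emptyset$.
\end{enumerate}

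The surviving $\qq_{0,2}$-terms must cancel in pairs, as in the classical proof that the normalized Hochschild complex is a quotient complex. For $\one$ at position $j$ with $2<j<d$, the term $\qq_{0,2}(x_{j-1},\one)$ cancels against $\qq_{0,2}(\one,x_{j+1})$: both leave the same tensor with $x_j$ deleted. Their signs are opposite because of $\qq_{0,2}(\one,x)=(-1)^{|x|+1}\qq_{0,2}(x,\one)=x$ combined with the Koszul signs $\ast_1$, using $|\one|=-1$. The boundary cases are handled as follows.
\begin{itemize}
\item For $j=2$, the term $\qq_{0,2}(x_1,\one)$ from the second sum (the case $L=\{1,2\}$, which is allowed since $M$ may be empty) cancels against $\qq_{0,2}(\one,x_3)$ from the first sum.
\item For $j=d$, the term $\qq_{0,2}(x_{d-1},\one)$ from the first sum cancels against the wrap-around term $\qq_{0,2}(\one,x_1)$ from the second sum, whose sign $\ast_2$ involves $|x_R|=|\one|=-1$.
\end{itemize}

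The main obstacle is the sign bookkeeping in these cancellations, especially the wrap-around case with sign $\ast_2=\e+|y_{I_1}|+|x_R|(|x_M|+|x_L|)$. I would first verify it for $a=0$, where it reduces to the known $A_\infty$ statement, and then observe that $I_2=\emptyset$ is forced in every surviving term. Consequently $\e=0$, and the closed inputs contribute the same factor $|y_{[a]}|$ and the same $|y_{I_2}||x_L|=0$ to both terms of each pair. Hence the closed sector never obstructs the cancellation.
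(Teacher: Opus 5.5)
Your proposal is correct and is essentially the paper's proof: $\pi$ is the quotient by the span $D_\bullet$ of chains with a unit in a non-initial open slot, and $b(D_\bullet)\subset D_\bullet$ because the unit axioms kill every insertion of $\one$ into $\qq$ except the $\qq_{0,2}$ terms; the ``left-neighbour'' terms (the paper's \eqref{b2.8}, \eqref{b2.10}) cancel exactly against the ``right/cyclic-neighbour'' terms (\eqref{b2.9}, \eqref{b2.11}), and $I_2=\emptyset$ is forced, so the closed sector plays no role. The one pairing you do not list is $d=2$ (so $j=2=d$), where both surviving terms, $\qq_{0,2}(x_1,\one)$ and $\qq_{0,2}(\one,x_1)$, come from the second sum, with $\ast_2=|y_{[a]}|$ and $\ast_2\equiv|y_{[a]}|+|x_1|$ respectively, and they cancel by the same sign check.
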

\begin{proof}
    We use a classical construction from \cite[Chapter 8, 9]{Weibel1960AnIT}, initially for associative algebras. This is later adapted for non-unital $A_\infty$-categories in e.g. \cite[Section 1.4]{loday1992cyclic} and \cite[Section 3.5]{Sheridan_2019}. We define the graded vector space $D_\bullet(\gg;A,A)$ to be the subspace of $C^{un}_\bullet(\gg;A,A)$ generated by tensors with a unit not in the first slot in the open sector input. A generator can be written uniquely as $w=y_{[a]}\otimes x_L \otimes \one \otimes x_R$, such that $L$ is non-empty and $x_R$ contains no unit as a factor. The Hochschild differential $b$ restricts to a differential on $D_\bullet(\gg;A,A)$, which makes it into a subcomplex of $C^{un}_\bullet(\gg;A,A)$. Indeed, one computes that 
    \begin{align}
        b(w)=&\sum_{\substack{L_1\dscup L_2\dscup L_3=L, I_1\sqcup I_2=[a]\\L_1\neq \phi}} (-1)^{\e+|y_{I_1}|+|x_{L_1}|+|y_{I_2}||x_{L_1}|}y_{I_1}\otimes x_{L_1}\otimes \qq(y_{I_2};x_{L_2})\otimes x_{L_3}\otimes \one \otimes x_R \label{b2.4} \\
        +&\sum_{R_1\dscup R_2\dscup R_3=R,I_1\sqcup I_2=[a]} (-1)^{\e+|y_{I_1}|+|x_L|+|x_{R_1}|+|y_{I_2}|(|x_L|+1+|x_{R_1}|)+1}y_{I_1}\otimes x_L\otimes \one \otimes x_{R_1}\otimes \qq(y_{I_2};x_{R_2})\otimes x_{R_3}\label{b2.5}\\
        +&\sum_{\substack{L_1\dscup L_2=L,L_1\neq \phi\\ R_1\dscup R_2=R,I_1\sqcup I_2=[a]}} (-1)^{\e+|x_{R_2}|(|x_L|+|x_{R_1}|+1)+y_{I_1}}y_{I_1}\otimes \qq (y_{I_2};x_{R_2}\otimes x_{L_1})\otimes x_{L_2}\otimes \one \otimes x_{R_1}\label{b2.6}\\
        +&\sum_{I_1\sqcup I_2=[a]} (-1)^{\e+1}\ll(y_{I_1})\wedge y_{I_2}\otimes x_L\otimes \one \otimes x_R\label{b2.7}\\
        +&(-1)^{|y_{[a]}|+|x_{L_1}|}y_{[a]}\otimes x_{L_1}\otimes \qq_{0,2}(x_{l}\otimes \one)\otimes x_R \quad \text{  (if $|L|\geq 2$, $L\eqqcolon L_1\dscup \{l\}$)}\label{b2.8}\\
        +& (-1)^{|y_{[a]}|+|x_L|}y_{[a]}\otimes x_L\otimes \qq_{0,2}(\one\otimes x_r)\otimes x_{R_1} \quad \text{ (if $|R|\geq 1$, $R\eqqcolon\{r\}\dscup R_1$)}\label{b2.9}\\
        +& (-1)^{|y_{[a]}|} y_{[a]}\otimes \qq_{0,2}(x_L\otimes \one)\otimes x_R \quad \text{ (if $|L|=1$)}\label{b2.10}\\
        +& (-1)^{y_{[a]}} y_{[a]}\otimes \qq_{0,2}(\one \otimes x_{L_1})\otimes x_{L_2} \quad \text{ (if $R=\phi$)}\label{b2.11}
    \end{align}
   where $y_{[a]}=(-1)^\e y_{I_1}\wedge y_{I_2}$ as usual. Now observe that \eqref{b2.8} and \eqref{b2.10} combine into a copy of $(-1)^{\e+|y_{[a]}|+|x_L|+1}w$, \eqref{b2.9} and \eqref{b2.11} combine into a copy of $(-1)^{\e+|y_{[a]}|+|x_L|}w$, which cancel precisely. We have applied the unitality $\qq_{0,2}(\one,-)=(-1)^{|-|+1}\qq_{0,2}(-,\one)=\id$. The remaining terms \eqref{b2.4},\eqref{b2.5},\eqref{b2.6},\eqref{b2.7} all sit in $D_\bullet(\gg;A,A)$.
   
   Now the quotient complex $C^{un}_\bullet(\gg;A,A)/D_\bullet(\gg;A,A)$ is just the conventional normalized Hochschild chain complex, $C_\bullet(\gg;A,A)$, and we take $\pi$ to be the natural quotient map.    
\end{proof}
Slightly abusing notation, we will write $b$ for $\bar b$. For associative algebras, it is known that the $\pi$ is a quasi-isomorphism, see e.g. \cite{Weibel1960AnIT}. The same argument works for $A_\infty$-algebras and OCHAs as well. Hence one may freely use either $C_\bullet^{un}(\gg;A,A)$ or $C_\bullet(\gg;A,A)$ as the chain model of Hochschild homology. \begin{lem}
    The complex $(D_\bullet(\gg;A,A),b|_{D_\bullet})$ is acyclic.\end{lem}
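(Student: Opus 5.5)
We will prove acyclicity of $(D_\bullet(\gg;A,A),b|_{D_\bullet})$ with a filtration argument. The idea is to reduce to the classical statement for the unital associative algebra $(A,\qq_{0,2})$: there the subcomplex of degenerate chains is contractible, using the homotopy that inserts or moves the unit.

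\textbf{Step 1: the filtration.} Filter $D_\bullet(\gg;A,A)$ by a weight that strictly increases under every non-classical piece of $b$. Let $F^p D_\bullet$ be spanned by generators $y_{[a]}\otimes x_{[d]}$ with $a - d \geq p$, or some similar combination of the closed length $a$ and the open length $d$. Each summand of $b$ either
\begin{itemize}
\item applies $\qq_{n,m}$ with $(n,m)\neq(0,2)$, or
\item applies $\ll$,
\end{itemize}
and these change $(a,d)$ so that the weight does not decrease. To arrange this we should combine with the grading by the number of inputs absorbed: use the weight $w=2a-d$, or filter by $a$ first and then by $d$. The $\qq_{0,1}$ and $\qq_{0,m}$ pieces with $m\ge 3$ also need treatment. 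We will handle them with the standard trick of filtering by the total "tensor length minus number of $\qq$-inputs". Concretely, the piece of $b$ preserving the weight should be exactly the bar-type differential $b_2$ built from $\qq_{0,2}$ alone, acting on $A^{\otimes(d+1)}$ with the $\gg^{\wedge a}$ factor as a spectator (up to sign). All other pieces strictly raise the weight.

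\textbf{Step 2: the associated graded is acyclic.} On the associated graded, the differential is $\pm\,\id_{\gg^{\wedge a}}\otimes b_2$ restricted to degenerate chains. The strict unitality $\qq_{0,2}(\one,x)=x$ and $\qq_{0,2}(x,\one)=\pm x$ are the only identities used there. Define the contracting homotopy $h$ by
\[
h(y\otimes x_L\otimes \one\otimes x_R)=\pm\, y\otimes x_L\otimes \one\otimes\one\otimes x_R ,
\]
that is, the unit is doubled, or equivalently inserted right after the first unit. This is the classical homotopy of \cite[Chapter 9]{Weibel1960AnIT} and \cite[Section 1.4]{loday1992cyclic}. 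We then check $b_2h+hb_2=\id$ on the degenerate subcomplex, and this check is verbatim the associative computation. Note that $b_2$ need not square to zero by itself, since $\qq_{0,1}$ may be nonzero. So instead we take the weight-preserving part to be $\qq_{0,1}+\qq_{0,2}$, i.e.\ the filtration counts only the $\qq_{0,m}$ with $m\ge3$ and the closed inputs. In the dg case the same homotopy works, because $\qq_{0,1}(\one)=0$ and $\qq_{0,1}$ is a derivation of $\qq_{0,2}$ up to sign.

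\textbf{Step 3: convergence.} A filtered complex whose associated graded is acyclic is itself acyclic, provided the filtration is exhaustive and complete, or bounded in each total degree-and-length component. Here $D_\bullet$ is a direct sum over $(a,d)$, and each application of a higher piece of $b$ changes $(a,d)$ in a controlled way. The cleanest route is therefore to filter by $p=a+$(something bounded by $d$), which is bounded on each $(a,d)$-summand, and to run the spectral sequence argument summand-wise. Alternatively, one can build an explicit contraction $H=h\sum_k(-(b-b_2)h)^k$ by homological perturbation, where the series is finite on each generator.

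\textbf{Main obstacle.} The hardest step is choosing a weight that makes the perturbation locally nilpotent while keeping the weight-zero part exactly $\qq_{0,1}+\qq_{0,2}$. Two features cause trouble: $\qq_{n,0}$ increases $d$ (curvature insertion), and $\qq_{0,m}$ with $m\ge3$ decreases $d$. We would first try the weight $d+2a$ with a decreasing filtration and check each piece. If curvature terms obstruct this, we would fall back on working with a complete, $\hbar$-adic-type filtration by closed length $a$: the $\qq_{n,\ast}$ with $n\ge1$ and $\ll$ with at least one closed input consume closed inputs, while $\ll_1$ preserves $a$. This reduces the problem to the $A_\infty$ case with the $\ll_1$ differential on $\gg$ as a spectator. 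The $A_\infty$ case itself is standard, via the length filtration.
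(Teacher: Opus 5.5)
\textbf{Steps 1--3 do not work as written.}

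A summand of $b$ that applies $\qq_{n,m}$ sends the closed/open lengths $(a,d)$ to $(a-n,\,d-m+1)$. A summand that applies $\ll_k$ sends them to $(a-k+1,\,d)$. So any weight depending on $(a,d)$ that the $\qq_{0,2}$-terms preserve must be independent of $d$, and is then preserved by every $\qq_{0,m}$. Also, $\qq_{0,1}$ and $\ll_1$ preserve $(a,d)$, so they always survive to the associated graded. The weight you are looking for therefore does not exist, and $\gg^{\wedge a}$ is never a pure spectator.

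Write $b_m$ for the part of $b$ built from $\qq_{0,m}$. Adding $b_1$ does not cure the failure of $b_2^2=0$ that you noticed: the $A_\infty$ relations give $(b_1+b_2)^2=b_2^2=-(b_1b_3+b_3b_1)$, which is nonzero in general. So $b_1+b_2$ cannot serve as the unperturbed differential in your perturbation series. That series is not locally finite anyway, since $h$ raises $d$ while $b_1$ and $\ll_1$ lower nothing. The ``summand-wise'' spectral sequence in Step 3 is also not meaningful, because $b$ mixes the $(a,d)$-summands.

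Isolating $\qq_{0,2}$ is in any case unnecessary. By strict unitality ($\qq_{n,k}(y;\ldots,\one,\ldots)=0$ for $k\ge 3$, $\qq_{n,1}(y;\one)=0$, and $\qq_{n,2}(y;\one,x)=\qq_{n,2}(y;x,\one)=0$ for $n>0$), every term in which a higher $\qq_{0,m}$ receives a unit vanishes. What genuinely obstructs $b\Gamma+\Gamma b=\id$ for the unit-doubling homotopy $\Gamma$ are terms that consume closed inputs, notably curvature values $\qq_{n,0}(y)$ inserted between the two copies of $\one$.

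\textbf{Your fallback is correct and is exactly the paper's proof.}

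Take $F^pD$ spanned by generators of closed arity $a\le p$. By the count above, $b$ preserves $F^p$. Its associated graded part $b_0$ consists of all the $\qq_{0,k}$ together with $\ll_1$, which acts as a differential on $\gg^{\wedge p}$. Every other piece strictly lowers $a$, including the curvature terms $\qq_{n,0}$ with $n\ge 1$ and the $\ll_k$ with $k\ge 2$. So curvature causes no trouble at all.

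From here the two routes differ slightly:
\begin{itemize}
\item The paper checks $b_0\Gamma+\Gamma b_0=\id$ on $\mathrm{gr}_p$ by direct computation.
\item You would instead identify $\mathrm{gr}_p$, with Koszul signs, with the tensor product of $(\gg^{\wedge p},\ll_1)$ and the degenerate subcomplex of the $A_\infty$-algebra $(A,\qq_{0,\bullet})$. You then conclude from the $A_\infty$ case and K\"unneth over a field. This is more modular and avoids redoing the unit bookkeeping in the presence of $\ll_1$.
\end{itemize}
The filtration is increasing with $F^{-1}D=0$, and every chain lies in some $F^p$ because $D$ is a direct sum. A finite downward induction on $p$ therefore finishes, as in the paper; no completeness or $\hbar$-adic argument is needed.

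Your Step~2 intuition does have a correct home, in the $A_\infty$ input. For the open-length filtration, $E_0$ carries only $\qq_{0,1}$, and $\qq_{0,2}$ appears as $d_1$. This gives the classical degenerate complex of the unital algebra $H(A,\qq_{0,1})$, so $E_2=0$. Both steps can be merged into the single weight $d+3a$. Your $d+2a$ fails because the curvature $\qq_{1,0}$ lowers it by exactly one, just like $\qq_{0,2}$.
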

\begin{proof}
        We define the following operator (keeping in mind the unique presentation of $w\in D_\bullet$) \[\Gamma:D_\bullet(\gg;A,A)\to D_{\bullet-1}(\gg;A,A):\quad y_{[a]}\otimes x_L\otimes \one \otimes x_R\mapsto (-1)^{|y_{[a]}|+|x_L|+1}y_{[a]}\otimes x_L\otimes \one \otimes \one \otimes x_R \]
     Unlike in the case of classical associative algebras or $A_\infty$-algebras, it is not true that $b\circ \Gamma+\Gamma\circ b=\id_{D_\bullet(A,A)}$. However, we can bypass this using a spectral sequence argument associated to the natural closed-arity filtration on $C^{un}_\bullet$ and hence $ D_\bullet$
     \[F^pD\coloneqq \bigoplus_{k\geq 0}\bigoplus_{l=0}^p \gg^{\wedge l}\otimes A^{\otimes (k+1)}\cap D\]
     It is clear that $b(F^pD)\subset F^pD$. We write $b=b_0+\tilde b$, where $b_0$ consists of operations $\qq_{0,k}$ and $\ll_1$, $\tilde b$ packages the others. The claim is that $b_0\circ \Gamma+\Gamma\circ b_0=\id$ on $\mathrm{gr}^F_p(D)$. To this end, compute    
    \begin{align}
        b\circ \Gamma(w) &= \sum_{\substack{L_1\dscup L_2\dscup L_3=L,L_1\neq \phi\\I_1\sqcup I_2=[a]}} (-1)^{\e+|x_L|+1+|x_{L_1}|+|y_{I_2}|(|x_{L_1}|+1)}y_{I_1}\otimes x_{L_1}\otimes \qq(y_{I_2};x_{L_2})\otimes x_{L_3}\otimes \one \otimes \one \otimes x_R\\
+&\sum_{\substack{L_1\dscup L_2\dscup L_3=L,L_1\neq \phi\\I_1\sqcup I_2=[a]}} (-1)^{\e+|y_{I_2}||x_{L}|}y_{I_1}\otimes x_{L}\otimes \one \otimes\qq(y_{I_2})\otimes \one \otimes x_R \label{eq_acyclic_openarity0}\\        
        +&\sum_{L_1\dscup L_2=L,L_1\neq \phi, |L_2|=1} (-1)^{|x_L|+1+|x_{L_1}|}\textcolor{ForestGreen}{y_{[a]}\otimes x_{L_1}\otimes \qq(x_{L_2}\otimes \one)\otimes \one \otimes x_R} \label{green2.17}\\
        +&(-1)^{1} \textcolor{RubineRed}{y_{[a]}\otimes x_L\otimes \one\otimes x_R}\label{red2.18}\\
        +&\sum_{R_1\dscup R_2=R,|R_1|=1} (-1)^{0} \textcolor{NavyBlue}{y_{[a]}\otimes x_L\otimes \one \otimes \qq(\one \otimes x_{R_1})\otimes x_{R_2}}\label{blue2.19}\\
        +&\sum_{\substack{R_1\dscup R_2\dscup R_3=R\\I_1\sqcup I_2=[a]}} (-1)^{\e+1+|x_{R_1}|+|y_{I_2}|(|x_L|+|x_{R_1}|+1)}y_{I_1}\otimes x_L\otimes \one \otimes \one \otimes x_{R_1}\otimes \qq(y_{I_2};x_{R_2})\otimes x_{R_3}\\
        +&\sum_{\substack{L_1\dscup L_2=L,R_1\dscup R_2=R\\I_1\sqcup I_2=[a],L_1\neq \phi}} (-1)^{\e+|y_{I_2}|+|x_L|+1+|x_{R_2}|(|x_L|+|x_{R_1}|)} y_{I_1}\otimes \qq(y_{I_2};x_{R_2}\otimes x_{L_1})\otimes x_{L_2}\otimes \one \otimes \one \otimes x_{R_1}\\
        +& \sum_{|L|=1}(-1)^{|x_L|+1}\textcolor{ForestGreen}{y_{[a]}\otimes \qq(x_L\otimes \one) \otimes \one \otimes x_{R}}\label{green2.22}\\
        +& \sum_{R=\phi,L_1\dscup L_2=L,|L_1|=1}(-1)^{0}\textcolor{NavyBlue}{y_{[a]}\otimes \qq(1\otimes x_{L_1})\otimes x_{L_2}\otimes \one} \label{blue2.23}\\
        +&\sum_{I_1\sqcup I_2=[a]} (-1)^{\e+|x_L|+|y_{[a]}|}\ll(y_{I_1})\wedge y_{I_2}\otimes x_L\otimes \one \otimes x_R
    \end{align}
where $\e$ is such that $y_{[a]}=(-1)^\e y_{I_1}\wedge y_{I_2}$. We see that \eqref{green2.17} and \eqref{green2.22} combine into a copy of $y_{[a]}\otimes x_L\otimes \one \otimes x_R=w$, so do \eqref{blue2.19} and \eqref{blue2.23}. One copy cancels \eqref{red2.18}. \eqref{eq_acyclic_openarity0} drops the closed-arity.

\begin{align}
    \Gamma\circ b(w)=& \sum_{L_1\dscup L_2\dscup L_3=L,L_1\neq \phi} (-1)^{\e+|x_L|+|x_{L_1}|+|y_{I_2}|+|y_{I_2}||x_{L_1}|}y_{I_1}\otimes x_{L_1}\otimes \qq (y_{I_2};x_{L_2})\otimes x_{L_3}\otimes \one \otimes \one \otimes x_R\\
    +&\sum_{R_1\dscup R_2\dscup R_3=R} (-1)^{\e+|x_{R_1}|+|y_{I_2}|(1+|x_L|+|x_{R_1}|)}y_{I_1}\otimes x_L\otimes \one \otimes \one \otimes x_{R_1}\otimes \qq(y_{I_2};x_{R_2})\otimes x_{R_3}\\
    +&\sum_{L_1\dscup L_2=L,R_1\dscup R_2=R,L_1\neq \phi} (-1)^{\e+|x_{R_2}|(|x_L|+|x_{L_1}|)+|x_L|+|y_{I_2}|} y_{I_1}\otimes \qq(y_{I_2};x_{R_2}\otimes x_{L_1})\otimes x_{L_2}\otimes \one \otimes \one \otimes x_{R_1} \\
    +&\sum_{I_1\sqcup I_2=[a]}(-1)^{\e+|x_L|+|y_{[a]}|+1}\ll(y_{I_1})\wedge y_{I_2}\otimes x_L\otimes \one  \otimes x_R\\
    +&\text{ Terms that involve }\qq(y_{I_2})\label{eq_acyclic_openari0}
\end{align}
\eqref{eq_acyclic_openari0} drops the closed-arity. Apart from that, all terms with at least two units cancel. Terms with bulk insertions cancel. The net result is only $w$. This shows the claim. 

Now to prove the lemma, pick a cycle $z\in D_\bullet$. Suppose it is in $F^pD$. Acyclicity of the associated graded means that there exists an element $h_1\in F^pD$ such that $z-bh_1\in F^{p-1}D$ is a cycle. Repeating this process, we see that $z$ is a boundary, hence $D_\bullet$ is acyclic. This is equivalently a spectral sequence argument for the filtered complex $(F^pD^\bullet,b)$.
\end{proof}
    With the proposition, we may safely choose a splitting $A\cong \bar A\oplus k\cdot \one$. Then $b$ restricted on the normalized complex $C_\bullet(\gg;A,A)$ is computed by first applying the formula above, regarding $C_\bullet$ as a graded sub-vector space of $C_\bullet^{un}$ through the splitting, and then projecting to the normalized Hochschild chain complex.

Similarly, the normalized open-closed Hochschild cochain complex is defined as $$C^\bullet(\gg;A,A)\coloneqq \prod_{\substack{i,k\geq 0\\(i,k)\neq (0,0)}} \hom_k(\gg^{\wedge i}\otimes  {\bar A}^{\otimes j},A)$$ endowed with shifted degree, or in other words $ \prod_{i,j} \hom_k(\gg[1]^{\wedge i}\otimes  \bar A[1]^{\otimes j},A[1])$ endowed with the natural grading. It can be naturally identified with the graded sub-vector space of $$C_{un}^\bullet(\gg;A,A)\coloneqq \prod_{\substack{i,j\geq 0}\\(i,j)\neq (0,0)} \hom_k(\gg^{\wedge i}\otimes A^{\otimes j},A)$$ consisting of cochains that vanish if any open-sector input argument is the unit. The space of unnormalized open-closed Hochschild cochains is a complex, equipped with the following differential \cite{yuan2024openclosedstringanaloguehochschild}

\[\d D\coloneqq\qq\{D\}-(-1)^{|D|}D\{\qq\}+(-1)^{|D|}\hat \ll(D)\]

which has shifted degree $1$. One can check that it restricts to a differential on $C^\bullet(\gg;A,A)$ and so endows it with a chain complex structure. We define the open-closed Hochschild cohomology as the cohomology of the normalized complex $HH^\bullet(\gg;A,A)\coloneqq H^\bullet(C^\bullet(\gg;A,A),\d)$. It was shown in \cite{yuan2024openclosedstringanaloguehochschild,yuan2025cyclicbracerelationbv} to admit a natural BV structure, whose induced Gerstenhaber bracket is given by
\[[D,E]\coloneqq D\{E\}-(-1)^{|D||E|}E\{D\}\]
and the Yoneda product 
\[D\cup E\coloneqq (-1)^{|D|+1}\qq\{D,E\}\]
We now define the graded Lie bracket using composition of operators on $C_\bullet(\gg;A,A)$
\[[f,g]_\circ \coloneqq f\circ g-(-1)^{|f||g|}g\circ f\]

\subsection{The Connes operator}
In this section we define the Connes operator $B$ for an OCHA. Recall that we are working in the normalized open-closed Hochschild chain complex. We start from the Connes operator of a unital $A_\infty$-algebra. We take the normalized Hochschild chain complex $C_\bullet(A,A)$.
\begin{defn}
    The (normalized) Connes operator $B:C_\bullet(A,A)\to C_{\bullet-1}(A,A)$ is defined as
    $$B(a_{[d]})\coloneqq \sum_{\substack{L\dscup R=[d]\\L\neq \phi}}(-1)^{|a_L||a_R|}\one\otimes a_R\otimes a_L$$
    
    The shifted degree of $\one$ is $|\one|=-1$, hence $|B|=-1$.
\end{defn}
\begin{rmk} \label{rmk_A_inf_bB}
    As we work in the normalized Hochschild chain complex, it is obvious that $B^2=0$. Moreover, one can check directly that $B\circ b+b\circ B=0$, based on the property $\mm_2(\one,a)=a$, $\mm_2(a,\one)=(-1)^{|a|+1}a$. In more detail, expand 
    \begin{align}
        B\circ b(a_{[d]})=&\sum_{\substack{L\dscup M\dscup R_1\dscup R_2=[d]\\L\neq \phi}} (-1)^{|a_L|+|a_{R_2}|(|a_{[d]}|-|a_{R_2}|+1)}\one \otimes a_{R_2}\otimes a_L\otimes \mm(a_M)\otimes a_{R_1} \\
        +&\sum_{\substack{L_1\dscup L_2\dscup M\dscup R_2=[d]\\L_1\neq \phi}} (-1)^{|a_L|+(|a_{[d]}|-|a_{L_1}|)|a_{L_1}|}\one \otimes a_{L_2}\otimes \mm(a_M)\otimes a_R\otimes a_{L_1}\\
        +&\sum_{\substack{L\dscup M_1\dscup L_2\dscup R=[d]\\L\neq \phi}} (-1)^{(|a_{M_2}|+|a_R|)(|a_{[d]}|-|a_{M_2}|-|a_R|)+|a_{M_2}|}\one \otimes a_{M_2}\otimes \mm(a_R\otimes a_L)\otimes a_{M_1}
    \end{align}
    \begin{align}
        b\circ B(a_{[d]})=&\sum_{L\dscup R=[d],L\neq \phi} (-1)^{|a_L||a_R|}a_R\otimes a_L \label{eq2.7}\\
        +&\sum_{L\dscup R_1\dscup R_2\dscup R_3=[d],L\neq \phi} (-1)^{|a_L||a_R|+|a_{R_1}|+1}\one \otimes a_{R_1}\otimes \mm(a_{R_2})\otimes a_{R_3}\otimes a_L\\
        +&\sum_{L_1\dscup L_2\dscup R_1\dscup R_2=[d],L_1\neq \phi} (-1)^{|a_L||a_R|+|a_{R_1}|+1}\one \otimes a_{R_1}\otimes \mm(a_{R_2},a_{L_1})\otimes a_{L_2}\\
        +&\sum_{L_1\dscup L_2\dscup K_3\dscup R=[d],L_1\neq \phi} (-1)^{|a_L||a_R|+|a_R|+|a_{L_1}|+1}\one \otimes a_R\otimes a_{L_1}\otimes \mm(a_{L_2})\otimes a_{L_3}\\
        +&\sum_{L\dscup \{l\}\dscup R=[d]} (-1)^{|a_L|(|a_R|+|a_l|)+1}a_l\otimes a_R\otimes a_L\label{eq2.11}
    \end{align}
    the right hand side of \eqref{eq2.7} cancels \eqref{eq2.11}. Other terms cancel in pairs. The same computation applies to unital curved $A_\infty$-algebras.
\end{rmk}     
For an OCHA $(\gg,A)$, we define the Connes operator similarly
\begin{defn}
    The Connes operator $B:C_\bullet(\gg;A,A)\to C_{\bullet-1}(\gg;A,A)$ is defined as follows
    $$B(y_{[a]};x_{[d]})\coloneqq \sum_{\substack{L\dscup R=[d]\\L\neq \phi}}(-1)^{|y_{[a]}|+|a_L||a_R|}y_{[a]}\otimes \one\otimes a_R\otimes a_L$$
    More precisely, it is the induced operator in the normalized Hochschild chain complex.
\end{defn}
This operator obviously squares to zero. In a way similar to the (curved) $A_\infty$-case, one checks that $B\circ b+b\circ B=0$.

\subsection{A sign rule}
In this subsection we form a sign rule in order for our proof to be more readable.

Given a pure tensor $w$ in $C_\bullet(\gg;A,A)$, we denote by $\mathsf w$ its associated word, i.e. forgetting all operating brackets. For instance, the associated word of $y_{I_1}\wedge y_{I_2\sqcup I_3}\otimes x_R\otimes x_L\otimes D(\ll(y_{I_4});x_M)$ is $y_{I_1}\wedge y_{I_2\sqcup I_3}\otimes x_R\otimes x_L\otimes D\otimes \ll\otimes y_{I_4}\otimes x_M$. The Koszul sign rule w.r.t. the shifted degree is the rule of additional signs arising from exchanging adjacent letters. If two words differ only by a sequence of such exchange of adjacent letters, a natural question is to ask what signs appear for a sequence of adjacent exchange. The Koszul sign rule implies the following 

\begin{lem}
    The sign appearing in permuting a word $\mathsf w_1$ to another word $\mathsf w_2$ is independent of the sequence of adjacent exchange. It depends only on the words themselves.
\end{lem}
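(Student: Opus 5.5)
The plan is to show that the sign attached to a rearrangement is an explicit function of the two words. Let $\mathsf w_1=\ell_1\ell_2\cdots\ell_n$ be a word in letters of shifted degrees $|\ell_i|$. Any sequence of adjacent exchanges carrying $\mathsf w_1$ to $\mathsf w_2$ induces a permutation $\sigma\in S_n$ of the letter positions. The first step is to fix this permutation unambiguously. If letters repeat, we label each letter occurrence, so the letters are treated as distinct tokens; then $\sigma$ is determined by the sequence of moves. Each adjacent exchange of letters $\ell,\ell'$ contributes the factor $(-1)^{|\ell||\ell'|}$, and the total sign is the product of these factors.

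The second step is to prove that this product equals
\[
\prod_{\substack{i<j\\ \sigma^{-1}(i)>\sigma^{-1}(j)}}(-1)^{|\ell_i||\ell_j|}.
\]
Here $\sigma^{-1}(i)$ denotes the position of $\ell_i$ in $\mathsf w_2$, so the product runs over the pairs of letters whose relative order is reversed. We argue by tracking one unordered pair $\{\ell_i,\ell_j\}$ at a time. An adjacent exchange changes the relative order of exactly one pair, namely the two letters being swapped. Consequently, along the whole sequence the factor $(-1)^{|\ell_i||\ell_j|}$ appears once for each time that pair is swapped. The pair's relative order flips at each such swap, so the number of swaps is odd exactly when the pair ends up reversed. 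Since $(-1)^{2|\ell_i||\ell_j|}=1$, only this parity matters, which gives the formula above. The right-hand side depends only on the initial and final arrangements, which proves the independence. Equivalently, one can note that the assignment is a well-defined action of $S_n$ on the graded tensor product, as a consequence of the symmetric monoidal structure on graded vector spaces (Coxeter relations). The elementary pair-counting argument is cleaner, though, and we would present that.

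The one subtle point is the identification of letters. If identical symbols occur, for instance the unit $\one$ appearing twice, one might worry that different choices of which occurrence goes where give different signs. We resolve this by fixing the tokens as above, which is exactly what the computations in this paper do implicitly. Swapping two identical letters $\ell$ with $|\ell|$ odd would contribute $-1$, so a consistent labeling is genuinely needed. We expect this labeling convention to be the only real obstacle; the rest of the argument is bookkeeping.
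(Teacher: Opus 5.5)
Your proposal is correct and is essentially the paper's argument: for each pair of letters, the number of times that pair is exchanged is odd exactly when their relative order is reversed in $\mathsf w_2$, so the total sign depends only on the permutation $\sigma$. Your explicit labeling of repeated letters (for instance two copies of $\one$) makes precise a point the paper handles only implicitly, by writing $\mathsf w_2=p_{\sigma(1)}\cdots p_{\sigma(n)}$ for a fixed $\sigma$.
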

\begin{proof}
    Let $\mathsf w_1$ be a word of homogeneous letters $\mathsf w_1=p_1p_2...p_n$, of (shifted) degree $|p_i|$, $\mathsf w_2=p_{\s(1)}p_{\s(2)}...p_{\s(n)}$ for some permutation $\sigma$. The sign is controlled by a sequence of moves as follows, i.e. exchange of adjacent letters,
    \[...p_ip_j...\leadsto  (-1)^{|p_i||p_j|}...p_jp_i...\]
  Each move involves only two letters and contributes a summand in the exponent of $-1$. A sequence of moves contributes a list $N_{ij}$ recording the number of times $p_i$ and $p_j$ are exchanged for each ordered pair $i<j$. The total Koszul sign is $(-1)^{\sum_{i<j} N_{ij}|p_i||p_j|}$, so what matters is $N_{ij} \text { mod } 2$. This is determined only by the relative position of $p_i$ and $p_j$ in $\mathsf w_2$: If $\s^{-1}(i)<\s^{-1}(j)$, then $N_{ij}$ is even, otherwise it is odd. Hence the total Koszul sign is determined only by the word $\mathsf w_2$ but not the concrete sequence of moves.
\end{proof}

This lemma enables the following definition

\begin{defn}
    The \textit{canonical Koszul sign} $\k(\mathsf{w})$ of a word $\mathsf{w}$ is the sign appearing in any sequence of adjacent exchange, such that the final word is $\mathsf w_2=\mathsf w$, and the initial word $\mathsf w_1$ is given by $\mathsf z\otimes y_{[a]}\otimes x_{[d]}$. Here $\mathsf z$ is the tensor product of operations like $\qq,D,E,\one\otimes $ and $\ll$, put with their relative order in $\mathsf w$. 
\end{defn}
\vspace{.5em}
Slightly abusing notation, we write $\k(w)$ for the canonical Koszul sign of the associated word of $w$. We may omit the argument when the latter is clearly the word that follows the sign.

\paragraph{Example}
Let $w=y_{I_1}\wedge y_{I_2\sqcup I_3}\otimes x_R\otimes \one \otimes x_L\otimes D(\ll(y_{I_4});x_M)$, $\mathsf w=y_{I_1}\wedge y_{I_2\sqcup I_3}\otimes x_R\otimes \one \otimes x_L\otimes D\otimes \ll \otimes y_{I_4}\otimes x_M$. Then $\mathsf z=\one \otimes D\otimes \ll$, and $\mathsf w_1=\one \otimes D\otimes \ll \otimes y_{[a]}\otimes x_{[d]}$. We have 

\begin{align}
\k(y_{I_1}\wedge y_{I_2\sqcup I_3}\otimes x_R\otimes \one\otimes x_L\otimes D(\ll(y_{I_4});x_M))&=\notag \\
 \e+|x_R|(|x_L|+|x_M|)+|y_{I_4}|(|x_R|+|x_L|)&+(|y_{I_1}|+|y_{I_{2\sqcup 3}}|+|x_L|+|x_R|)|D|+ |x_L|\text{ mod } 2
\end{align}

where $y_{[a]}=(-1)^\e y_{I_1}\wedge y_{I_2\sqcup I_3}\wedge y_{I_4}$. This can be obtained, say, via first re-order subwords $y_{[a]}\otimes x_{[d]}\leadsto (-1)^{\e+|x_R|(|x_L|+|x_M|)+|y_{I_4}|(|x_R|+|x_L|)}y_{I_1}\wedge y_{I_2\sqcup I_3}\otimes x_R\otimes x_L\otimes y_{I_4}\otimes x_M$, then moving the letters in $\mathsf z$ rightward to the correct places.

Now comes the crucial observation. The canonical Koszul sign is defined with respect to a fixed initial word, which need not coincide with the word occurring in a given computation. The discrepancy, however, has only one source: adjacent exchange in $\mathsf z$. These extra signs are exactly those produced by moving one operation past another. This is the nontrivial contribution that we record separately from the universal Koszul factor $\k$.

In this way, the sign problem can be solved much more efficiently. The proof of identities is largely reduced to the problem of correctly classifying the words.

\section{The calculus structure on OCHA} \label{sec_cal}

In this section we establish the calculus structure on OCHAs. To ease the readers from heavy computations, let us say that the proof is not a collection of \textit{ad hoc} calculations. Rather it follows from e.g. the brace formalism similar to $A_\infty$-algebras \cite{Getzler2002CARTANHF}. The novelty is that the usage of OCHA structural relation instead of $A_\infty$ structural relation produces terms involving the closed sector structural morphisms $\ll_k$, which miraculously cancel. 

First we recall the notion of a calculus \cite{tamarkintsy2005ring} 
\begin{defn} \label{def_cal}
    A calculus is a pair of Gerstenhaber algebra $(\mathcal V^\bullet,\cup,[-,-])$ and a graded vector space $\O^\bullet$ together with 
    \begin{itemize}
        \item A structure of a graded module over the graded commutative algebra $(\mathcal V^{\bullet-1},\cup)$ on $\O^{\bullet}$, with the corresponding action denoted by $\i_D$, $D\in \mathcal V^\bullet$.
        \item A structure of a graded module over the graded Lie algebra $(\mathcal V^{\bullet},[-,-])$ on $\O^{\bullet}$, with the corresponding action denoted by $L_D$, $D\in \mathcal V^\bullet$ such that 
\[[L_D,\i_E]_\circ=(-1)^{|D|}\i_{[D,E]}\]
and $L_{D\cup E}=L_D\circ \i_E+(-1)^{|D|+1}\i_D\circ L_E$. Here $|\cdot|$ denotes the graded degree in both $\mathcal V^\bullet$ and $\O^\bullet$.
\item An operator $B$ of degree $-1$ on $\O^{\bullet}$ such that $B^2=0$ and $[B,\i_D]_\circ=L_D$.
    \end{itemize}
\end{defn}
The key is to define the correct action $\i_D$ and $L_D$ on $\O^\bullet$ and show that they obey the conditions $\i_D\circ \i_E=\i_{D\cup E}$, $L_{[D,E]}=[L_D,L_E]_\circ$. Then we need to check the identities $[B,\i_D]_\circ =L_D$ and $[L_D,\i_E]_\circ=(-1)^{|D|}\i_{[D,E]}$. The last identity $L_{D\cup E}=L_D\circ \i_E+(-1)^{|D|+1}\i_D\circ L_E$ follows automatically.

From now on we take the decomposition of $x$ as simply $[d]=J_1\dscup J_2...$. The range of summation is sometimes omitted.

\subsection{The module structure} \label{subsec_mod}
\begin{defn}
 For each homogeneous $D\in C^\bullet(\gg;A,A)$ we have an element $\i_D\in\hom^{|D|+1}(C_\bullet(\gg;A,A),C_\bullet(\gg;A,A,))$     \[\iota_D(y_{[a]};x_{[d]})=(-1)^{|D|}\sum(-1)^\ast y_{I_1}\otimes \qq(y_{I_2};x_{J_5}\otimes x_{J_1}\otimes D(y_{I_3};x_{J_2})\otimes x_{J_3})\otimes x_{J_4}\]
    Here the summation is over all possible decomposition $I_1\sqcup I_2\sqcup I_3=[a]$ and $J_1\dscup J_2\dscup J_3\dscup J_4\dscup J_5=[d]$, where $J_1$ is nonempty. The sign is     \[\ast =\e+|x_{J_5}|(|x_{[d]}|-|x_{J_5}|)+|y_{I_3}|(|x_{J_5}|+|x_{J_1}|)+|D|(|y_{I_1}|+|y_{I_2}|+|x_{J_1}|+|x_{J_5}|)+y_{I_1}\]
    which is simply $\k$, using our sign notation. $\i_D$ is a map of shifted degree $|D|+1$.
\end{defn}
Now we prove the following lemma, by classifying all patterns of words.
\begin{lem}
 There is a chain level identity  \[[b,\i_D]_\circ=b\circ \i_D-(-1)^{|D|+1}\i_D\circ b=\i_{\d D}\]
\end{lem}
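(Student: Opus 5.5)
Both sides of $[b,\i_D]_\circ=\i_{\d D}$ will be expanded on a generator $w=y_{[a]}\otimes x_{[d]}$ of $C_\bullet(\gg;A,A)$. Every term is a pure tensor decorated by the operations $\qq$, $\ll$ and $D$, so I will sort the terms by the pattern of their associated words $\mathsf w$. By the sign lemma, the Koszul part of every coefficient is the canonical sign $\k(\mathsf w)$. The only extra signs come from reordering the operation letters in $\mathsf z$ (moving $b$'s $\qq$ or $\ll$ past $D$ or past the outer $\qq$ of $\i_D$), plus the explicit prefactors $(-1)^{|D|}$ and $(-1)^{|D|+1}$. So I only need to match word shapes and these few relative signs; I will not recompute full Koszul exponents.

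\emph{Expanding the right-hand side.} Writing $\d D=\qq\{D\}-(-1)^{|D|}D\{\qq\}+(-1)^{|D|}\hat\ll(D)$, the term $\i_{\d D}$ splits into three families of words:
\[y\otimes\qq(y;x\otimes x\otimes\qq(y;\dots D(y;x)\dots)\otimes x)\otimes x,\qquad y\otimes \qq(y;x\otimes x\otimes D(y;\dots\qq(y;x)\dots)\otimes x)\otimes x,\qquad y\otimes\qq(y;x\otimes x\otimes D(\ll(y)\wedge y;x)\otimes x)\otimes x .\]

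\emph{Expanding the left-hand side.} In $b\circ\i_D$, the inner $\qq$ or the $\ll$-term of $b$ acts on the output of $\i_D$ in one of several places:
\begin{itemize}
\item on the closed inputs $y_{I_1}$ alone (an $\ll$-word);
\item on open letters in $x_{J_4}$ only;
\item on a cyclic window containing the outer $\qq(\dots D\dots)$;
\item on a cyclic window wrapping from $x_{J_4}$ around to the first slot.
\end{itemize}
In $\i_D\circ b$, the inner $\qq$ of $b$ lands in one of the following positions relative to the $\i_D$ data:
\begin{itemize}
\item inside the $D$-argument;
\item inside the outer $\qq$ but outside $D$;
\item in $x_{J_4}$;
\item straddling the cyclic cut.
\end{itemize}
Separately, the $\ll$-term of $b$ distributes its $y$'s among $I_1,I_2,I_3$.

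\emph{Matching terms.} The plan is as follows.
\begin{enumerate}
\item Terms where the $b$-operation and the $\i_D$-data are disjoint (for example, $\qq$ acting inside $x_{J_4}$, or $\ll$ acting only on $y_{I_1}$) appear in both compositions and cancel, because the two differ only by moving an operation past $D$ and the outer $\qq$. This produces exactly the sign $(-1)^{|D|+1}$ in the graded commutator.
\item The words with $D$ nested inside the outer $\qq$ nested inside another $\qq$ are regrouped using the OCHA relation $\hat\ll(\qq)=\qq\{\qq\}$ applied to the outer $\qq$. The relation trades a nested $\qq(\dots\qq(\dots)\dots)$, where the inner $\qq$ contains $D$, for the $\qq\{D\}$-family plus $\ll$-words of the form $\qq(\ll(y)\wedge y;\dots D\dots)$.
\item These new $\ll$-words cancel against the $\ll$-terms of $b$ whose $\ll$ absorbs $y$'s from $I_2$ in $\i_D\circ b$.
\item The $\ll$-terms absorbing $y$'s from $I_3$ assemble into $\i_{\hat\ll(D)}$.
\item The terms with $\qq$ inside $D$ give $\i_{D\{\qq\}}$.
\end{enumerate}

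\emph{Unit and normalization.} I will also check that the boundary terms specific to the normalized complex vanish. These are terms where $b$ produces $\qq_{0,2}$ with the first tensor factor, or where the cyclic wrap would place a unit. They vanish after projection by the proposition on $\pi$, the unitality of $\qq$, and the normalization of $D$ (it vanishes on $\one$). This plays the same role as in Remark \ref{rmk_A_inf_bB}.

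\emph{Main obstacle.} I expect the hardest step to be the cyclic-window terms, where the wrap-around block $x_{J_5}$ in $\i_D$ interacts with the wrap-around term of $b$. One has to see that the two cyclic cuts combine to enumerate exactly the decompositions needed for the OCHA relation on the outer $\qq$, with no double counting and with the nonemptiness condition on $J_1$ (and on $L$ in $b$) matching up. For these terms I would first fix the position of $D$ and the first tensor factor, then compare index sets bijectively, and only afterwards check the relative sign using the rule that only reorderings within $\mathsf z$ contribute.
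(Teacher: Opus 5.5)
Your proposal is correct and follows essentially the same route as the paper. Both arguments cancel the disjoint terms. Both then let the wrap-around term of $b\circ\iota_D$ (inner $\qq$ containing $x_1$ and $D$) combine with the $\iota_D\circ b$ terms where $b$'s $\qq$ lands inside the outer $\qq$ but outside $D$ (including the straddling ones) into $\qq\{\qq\}$ minus the $\qq\{D\}$-family. This $\qq\{\qq\}$ is converted via the OCHA relation to cancel the $\ll$-terms absorbing $y$'s from $I_2$, and $\iota_{\d D}$ is read off from the leftover $\qq\{D\}$, $D\{\qq\}$ and $\hat\ll(D)$ words.

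In your step (2), make explicit two points. First, the ``inside the outer $\qq$ but outside $D$'' and straddling terms are exactly what completes $\qq\{\qq\}$; your matching list otherwise never assigns them. Second, the surviving $\qq\{D\}$-family consists of the nested words whose inner $\qq$ contains $D$ but not $x_1$, which is how the condition $J_1\neq\phi$ in $\iota_{\qq\{D\}}$ matches up.
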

\begin{proof}
    For readability, we decompose $b=b_O+b_C$ where $b_O(y_{[a]};x_{[d]})=\sum_{J_1\neq \phi} (-1)^\k y_{I_1}\otimes x_{J_1}\otimes \qq(y_{I_2};x_{J_2})\otimes x_{J_3}+\sum_{J_1\neq \phi} (-1)^\k y_{I_1}\otimes \qq(y_{I_2};x_{J_3}\otimes x_{J_1})\otimes x_{J_2}$ and $b_C(y_{[a]};x_{[d]})=\sum (-1)^{\k+1} \ll(y_{I_1})\wedge y_{I_2}\otimes x_{[d]}$. 
    We have
    \begin{align}
        b_O\circ \i_D(y_{[a]};x_{[d]})&=b_O\left(\sum_{J_1\neq \phi} (-1)^{\k+|D|} y_{I_1}\otimes \qq(y_{I_2};x_{J_5}\otimes x_{J_1}\otimes D(y_{I_3};x_{J_2})\otimes x_{J_3})\otimes x_{J_4}\right)\notag \\
        &=\sum_{J_1\neq \phi}(-1)^{\k+1} y_{I_1}\otimes \qq(y_{I_2};x_{J_7}\otimes x_{J_1}\otimes D(y_{I_3};x_{J_2})\otimes x_{J_3})\otimes x_{J_4}\otimes \qq(y_{I_4};x_{J_5})\otimes x_{J_6}\label{i3.1}\\
        &\quad +\sum_{J_1\neq \phi} (-1)^{\k+|D|} y_{I_1}\otimes \qq(y_{I_2};x_{J_6}\otimes \qq(y_{I_3};x_{J_7}\otimes x_{J_1}\otimes D(y_{I_4};x_{J_2})\otimes x_{J_3})\otimes x_{J_4})\otimes x_{J_5}\label{i3.2}
    \end{align}
\begin{align}
    (-1)^{|D|+2}\i_{D}\circ b_O(y_{[a]};x_{[d]})&=\i_D\left(\sum (-1)^{|D|+\k}y_{I_1}\otimes x_{J_1}\otimes  \qq(y_{I_2};x_{J_2})\otimes x_{J_3}+\sum (-1)^{\k+|D|} y_{I_1}\otimes \qq(y_{I_2};x_{J_3}\otimes x_{J_1})\otimes x_{J_2}\right)\notag\\
    &=\sum_{J_1\neq \phi} (-1)^{\k}y_{I_1}\otimes \qq(y_{I_2};x_{J_7}\otimes x_{J_1}\otimes D(y_{I_3};x_{J_2})\otimes x_{J_3})\otimes x_{J_4}\otimes \qq(y_{I_4};x_{J_5})\otimes x_{J_6} \label{i3.3}\\
    &\quad + \sum_{J_1\neq \phi} (-1)^{\k} y_{I_1}\otimes \qq(y_{I_2};x_{J_7}\otimes x_{J_1}\otimes D(y_{I_4};x_{J_2})\otimes x_{J_3}\otimes \qq(y_{I_3};x_{J_4})\otimes x_{J_5})\otimes x_{J_6} \label{i3.4}\\
    &\quad +\sum_{J_1\neq \phi} (-1)^\k y_{I_1}\otimes \qq(y_{I_2};x_{J_7}\otimes x_{J_1}\otimes D(y_{I_3};x_{J_2}\otimes \qq(y_{I_4};x_{J_3})\otimes x_{J_4})\otimes x_{J_5})\otimes x_{J_6}\label{i3.5}\\
    &\quad +\sum_{J_1\neq \phi} (-1)^{\k+|D|}y_{I_1}\otimes \qq(y_{I_2};x_{J_7}\otimes x_{J_1}\otimes \qq(y_{I_3};x_{J_2})\otimes x_{J_3}\otimes D(y_{I_4};x_{J_4})\otimes x_{J_5})\otimes x_{J_6}\label{i3.6}\\
    &\quad +\sum_{J_1\neq \phi} (-1)^{\k+|D|}y_{I_1}\otimes \qq(y_{I_2};x_{J_5}\otimes \qq(y_{I_3};x_{J_6})\otimes x_{J_7}\otimes x_{J_1}\otimes D(y_{I_4};x_{J_2})\otimes x_{J_3})\otimes x_{J_4}\label{i3.7} \\
    &\quad +\sum_{J_1\neq \phi} (-1)^{\k+|D|}y_{I_1}\otimes \qq(y_{I_2};x_{J_6}\otimes \qq(y_{I_3};x_{J_7}\otimes x_{J_1})\otimes x_{J_2}\otimes D(y_{I_4};x_{J_3})\otimes x_{J_4})\otimes x_{J_5}\label{i3.8}
\end{align}
and 
\begin{align}
    [b_C,\i_D]_\circ&=b_C\left(\sum (-1)^{|D|+\k}y_{I_1}\otimes \qq(y_{I_2};x_{J_5}\otimes x_{J_1}\otimes D(y_{I_3};x_{J_2})\otimes x_{J_3})\otimes x_{J_4}\right)\\
    &\quad -(-1)^{|D|+1}\i_D\left(\sum (-1)^\k \ll(y_{I_1})\wedge y_{I_2}\otimes x_{[d]}\right) \notag \\
    &=\sum_{J_1\neq \phi} (-1)^{|D|+1+\k} \ll(y_{I_1})\wedge y_{I_2}\otimes \qq(y_{I_3};x_{J_5}\otimes x_{J_1}\otimes D(y_{I_3};x_{J_2})\otimes x_{J_3})\otimes x_{J_4}\label{i3.10}\\
    &\quad +\sum_{J_1\neq \phi} (-1)^{|D|+\k} \ll(y_{I_1})\wedge y_{I_2}\otimes \qq(y_{I_3};x_{J_5}\otimes x_{J_1}\otimes D(y_{I_3};x_{J_2})\otimes x_{J_3})\otimes x_{J_4}\label{i3.11}\\
    &\quad +\sum_{J_1\neq \phi} (-1)^{|D|+1+\k}y_{I_1}\otimes \qq(\ll(y_{I_2})\wedge y_{I_3};x_{J_5}\otimes x_{J_1}\otimes D(y_{I_4};x_{J_2})\otimes x_{J_3})\otimes x_{J_4}\label{i3.12}\\
    &\quad +\sum_{J_1\neq \phi} (-1)^{\k} y_{I_1}\otimes \qq(y_{I_2};x_{J_5}\otimes x_{J_1}\otimes D(\ll(y_{I_3})\wedge y_{I_4};x_{J_2})\otimes x_{J_3})\otimes x_{J_4} \label{i3.13}
\end{align}
 \eqref{i3.10} and \eqref{i3.11} cancel. Observe that \eqref{i3.1} cancels with \eqref{i3.3}; \eqref{i3.2} combines with the \eqref{i3.4},\eqref{i3.6}, \eqref{i3.7} and \eqref{i3.8} into
\begin{align}
    \sum_{I_1\sqcup I_2\sqcup I_4=[a]} &(-1)^{\star+|D|}y_{I_1}\otimes \qq\{\qq\}(y_{I_2};x_{J_5}\otimes x_{J_1}\otimes D(y_{I_4};x_{J_2})\otimes x_{J_3})\otimes x_{J_4}\label{i3.14}\\
    +\sum_{J_1\neq \phi} &(-1)^{\k+|D|+1}y_{I_1}\otimes \qq(y_{I_2};x_{J_7}\otimes x_{J_1}\otimes \qq(y_{I_3};x_{J_2}\otimes D(y_{I_4};x_{J_3})\otimes x_{J_4})\otimes x_{J_5})\otimes x_{J_6}\label{i3.15}
\end{align}
here $\star=\k(y_{I_1}\wedge  y_{I_2}\otimes x_{J_5}\otimes x_{J_1}\otimes D(y_{I_4};x_{J_2})\otimes x_{J_3\dscup J_4})$; \eqref{i3.14} then cancels \eqref{i3.12}, using the OCHA relation $\qq\{\qq\}=\hat \ll (\qq)$. What remain are \eqref{i3.5}, \eqref{i3.13} and \eqref{i3.15}, which combine into $\i_{\d D}(y_{[a]};x_{[d]})$.    Note that \eqref{i3.13} is $\i_{(-1)^{|D|}\hat \ll (D)}(y_{[a]};x_{[d]})$.
\end{proof}

An immediate consequence is that when $D$ is a Hochschild cocycle, the action $\i_D$ descends to Hochschild homology and only depends on the cohomology class of  $D$. In other words,
\begin{prop}
    There is a well-defined action $HH^\ast(\gg;A,A)$ on $HH_\ast(\gg;A,A)$ given by 
    \[[a]\mapsto [D]\cap [a]:=[\i_Da]\]
\end{prop}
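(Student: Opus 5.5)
The statement is a formal consequence of the preceding lemma, the chain-level identity $[b,\i_D]_\circ=\i_{\d D}$. The plan is to check two things: that $\i_D$ maps cycles to cycles and boundaries to boundaries when $\d D=0$, and that the resulting map on $HH_\bullet(\gg;A,A)$ depends only on the class $[D]\in HH^\bullet(\gg;A,A)$. Throughout, we work in the normalized complex $C_\bullet(\gg;A,A)$. By the earlier proposition and the acyclicity of $D_\bullet(\gg;A,A)$, this complex computes $HH_\bullet(\gg;A,A)$, and $b$ is the induced differential.

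First, fix a cocycle $D$, so $\d D=0$. The lemma then reads $b\circ\i_D=(-1)^{|D|+1}\i_D\circ b$, so $\i_D$ is a chain map up to the sign $(-1)^{|D|+1}$, which is harmless.
\begin{itemize}
\item If $b a=0$, then $b(\i_D a)=(-1)^{|D|+1}\i_D(ba)=0$, so $\i_D a$ is a cycle.
\item If $a=bc$, then $\i_D a=\i_D(bc)=(-1)^{|D|+1}b(\i_D c)$, which is a boundary.
\end{itemize}
Hence $[a]\mapsto[\i_D a]$ is well defined on $HH_\bullet(\gg;A,A)$.

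Second, suppose $D$ and $D'$ are cohomologous, $D'=D+\d E$ with $E\in C^\bullet(\gg;A,A)$. The map $D\mapsto\i_D$ is linear in $D$: every term in the defining formula is linear in the single insertion of $D$, and the sign $\k$ depends on $|D|$ only, which is the same for $D$ and $\d E$ since $|\d E|=|E|+1=|D|$. Therefore $\i_{D'}=\i_D+\i_{\d E}$. Applying the lemma to $E$ gives $\i_{\d E}=b\circ\i_E-(-1)^{|E|+1}\i_E\circ b$. On a cycle $a$ the second term vanishes, so $\i_{\d E}a=b(\i_E a)$ is a boundary, and $[\i_{D'}a]=[\i_D a]$.

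The only point needing care is that $\i_D$ genuinely acts on the normalized complex. We need $\i_D$ to preserve the subcomplex $D_\bullet(\gg;A,A)$, or at least to be computed through the chosen splitting $A\cong\bar A\oplus k\cdot\one$ compatibly with $\pi$. I would verify this directly. Since $D$ is normalized, it vanishes whenever a unit enters its inputs. If a unit sits among the inputs of the outer $\qq$ but not in the first slot, the unitality of OCHA kills the term, except for the $\qq_{0,2}(\one,-)$ contributions. Terms where the unit survives outside $\qq$ remain in $D_\bullet$. The surviving $\qq_{0,2}(\one,-)$ contributions should cancel pairwise by the same mechanism as in the proof that $b$ descends, namely terms \eqref{b2.8}--\eqref{b2.11}. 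I expect this to be the main bookkeeping step. Granting it, the two steps above complete the proof, and the result is the cap product $[D]\cap[a]$.
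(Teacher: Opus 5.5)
Your argument is correct and is the same as the paper's, which treats the proposition as an immediate consequence of the chain-level identity $[b,\i_D]_\circ=\i_{\d D}$: cycles go to cycles, boundaries go to boundaries, and $\i_{\d E}a=b(\i_E a)$ on cycles. The normalization step you defer is easier than you expect, because $J_1\neq\emptyset$ and $D(\cdots)$ itself occupies a slot, so the outer $\qq$ in $\i_D$ always has at least two open inputs. Consequently a unit in a non-first slot is killed by the normalization of $D$, killed by unitality of $\qq$ in arity $\geq 3$, or lands in the trailing block $x_{J_4}$; no $\qq_{0,2}(\one,-)$ terms survive, and $\i_D(D_\bullet(\gg;A,A))\subset D_\bullet(\gg;A,A)$ holds directly.
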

Furthermore, under the associative algebra structure of $HH^\ast(\gg;A,A)$ induced by Yoneda product, this action endows  $HH_\ast(\gg;A,A)$ with an $HH^\ast(\gg;A,A)$-module structure

\begin{prop}
    For $D,E$ Hochschild cocycles, we have 
    \[[D]\cap ([E]\cap -)=[D\cup E]\cap -\]
    in $\hom(C_\bullet(\gg;A,A),C_\bullet (\gg;A,A))$. In other words, for $[D],[E]\in HH^\ast(\gg;A,A)$, we have $\i_D\i_E=\i_{D\cup E}$ in $HH_\ast(\gg;A,A)$.
\end{prop}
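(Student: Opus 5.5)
The plan is to follow the classical $A_\infty$ strategy of \cite{Getzler2002CARTANHF}: construct an explicit chain homotopy $H_{D,E}$ with
\[\i_D\circ\i_E-\i_{D\cup E}=[b,H_{D,E}]_\circ+(\text{terms involving }\i_{\d D},\ \i_{\d E}).\]
When $D$ and $E$ are cocycles, the correction terms vanish. The identity then holds on $HH_\bullet(\gg;A,A)$, and the preceding Lemma $[b,\i_D]_\circ=\i_{\d D}$ guarantees that both sides are well defined there.

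The homotopy will be the two-insertion analogue of $\i_D$:
\[H_{D,E}(y_{[a]};x_{[d]})=\pm\sum(-1)^{\k}\,y_{I_1}\otimes\qq(y_{I_2};x_{J_7}\otimes x_{J_1}\otimes D(y_{I_3};x_{J_2})\otimes x_{J_3}\otimes E(y_{I_4};x_{J_4})\otimes x_{J_5})\otimes x_{J_6},\]
with $J_1\neq\phi$. The sign is the canonical Koszul sign, together with an overall factor depending on $|D|$ and $|E|$, to be fixed at the end.

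\textbf{Step 1: expand $\i_D\circ\i_E$.} Applying $\i_D$ to $\i_E(w)$ gives two kinds of words. In the first kind, $D$ sits in the outer $\qq$ alongside the output $\qq(\dots E\dots)$ of $\i_E$, or $D$ is inserted in the tail. In the second kind, the first slot has been moved, and these are the wraparound words. The term in which the whole inner $\qq(\dots E\dots)$ sits in the first slot and $D$ follows it is exactly the shape of $\qq\{D,E\}$ plugged into $\i$. Up to the sign $(-1)^{|D|+1}$ from the definition of $\cup$ and the $(-1)^{|D|}$ in $\i$, this is $\i_{D\cup E}$.

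\textbf{Step 2: compute $[b,H_{D,E}]_\circ$.} I will split $b=b_O+b_C$ as in the previous proof and classify the words that arise.

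The $b_C$ terms behave exactly as in that Lemma. The terms where $\ll$ hits the outer $y_{I_1}$ cancel between the two compositions. The terms where $\ll$ enters the outer $\qq$ pair with the $\hat\ll(\qq)$ side of the OCHA relation. The terms where $\ll$ enters $D$ or $E$ produce the $\hat\ll(D)$ and $\hat\ll(E)$ parts of $\i_{\d D}$ and $\i_{\d E}$.

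For $b_O$, I will use the OCHA relation $\qq\{\qq\}=\hat\ll(\qq)$ to collect all nested $\qq(\dots\qq(\dots)\dots)$ words. The words in which $\qq$ is composed into $D$ or $E$, or $D$ or $E$ into $\qq$, assemble into the $\qq\{D\}$ and $D\{\qq\}$ parts of $\d D$ and $\d E$. Everything left over should be precisely the difference between $\i_D\i_E$ and $\i_{D\cup E}$ computed in Step 1.

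\textbf{Step 3: fix signs.} I will do this with the canonical Koszul sign $\k$, tracking only the exchanges among operation letters $\qq,D,E,\ll$, as explained in the subsection on the sign rule.

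\textbf{Main obstacle.} I expect the hardest part to be the wraparound words, where $x_{J_7}$ comes from the tail, and the closed-sector distribution. Closed inputs $y_I$ may enter either $D$, $E$, the inner $\qq$ or the outer $\qq$, and the $\qq_{n,0}$ curvature terms must also cancel. My first attempt will be to verify the cancellation on the associated graded of the closed-arity filtration, as in the acyclicity Lemma, where only $\qq_{0,k}$ and $\ll_1$ appear. There the argument reduces to Getzler's $A_\infty$ computation. I will then check that the higher-arity terms cancel by the same bracket bookkeeping, since each term is obtained from an $A_\infty$ term by distributing the $y$'s via $\e$. If a genuine chain-level identity fails, I will settle for the identity up to a further filtered homotopy. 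That weaker statement still suffices for the claim in $HH_\bullet(\gg;A,A)$.
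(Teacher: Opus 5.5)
There is a genuine gap, and it is in the order in which $D$ and $E$ are inserted. Since $J_1\neq\phi$ in the definition of $\i_D$, applying $\i_D$ to $\i_E(w)$ always puts the zeroth slot $\qq(\dots x_{J_1}\dots E(\dots)\dots)$ of $\i_E(w)$ into the outer $\qq$ \emph{before} $D$. So every term of $\i_D\circ\i_E$ is a word $\qq(\dots,\qq(\dots x_{J_1}\dots E\dots),\dots,D(\dots),\dots)$, whose inner $\qq$ contains the zeroth slot and $E$ but never $D$. Every term of $\i_{D\cup E}$ instead has the form $\qq(\dots x_{J_1}\dots,\qq(\dots D\dots E\dots),\dots)$, so your Step 1 identification is wrong. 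For the same reason, applying $\qq\{\qq\}=\hat\ll(\qq)$ to the outer $\qq$ of your $H_{D,E}$ (with $D$ before $E$) produces inner $\qq$'s containing either the zeroth slot and $D$ (the shape of $\i_E\circ\i_D$) or $D$ and $E$ (the shape of $\i_{D\cup E}$). Thus $[b,H_{D,E}]_\circ$ links $\i_E\circ\i_D$ to $\i_{D\cup E}$, not $\i_D\circ\i_E$. The identity you aim for is in fact false for every choice of overall sign. Take $A=k[x,y]$, $\gg=0$, only $\mm_2$, and the cocycles $D=\p_x$, $E=\p_y$. Then $H_{D,E}=0$, because its outer $\qq$ would need at least three inputs, and all correction terms vanish. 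Yet $\i_D\i_E(1\otimes x\otimes y)=\pm\,\p_y(x)\,\p_x(y)=0$, while $\i_{D\cup E}(1\otimes x\otimes y)=\pm\,\p_x(x)\,\p_y(y)=\pm 1$. Since the identity already fails in the $A_\infty$ case, neither the associated-graded reduction nor a further filtered homotopy can repair it.

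What works, and what the paper does, is to insert $E$ before $D$. With $K(E,D)$ (your $H_{E,D}$), exactly the bookkeeping you describe in Step 2 gives
\[(-1)^{|D||E|+|D|}\i_D\circ\i_E+(-1)^{|E|}\i_{E\cup D}+[K(E,D),b]_\circ+(-1)^{|D|}K(E,\d D)+(-1)^{|D|+|E|}K(\d E,D)=0,\]
hence $\i_{E\cup D}\simeq(-1)^{(|D|+1)(|E|+1)}\i_D\i_E$ for cocycles. The ingredient missing from your plan is the graded commutativity of the Yoneda product on $HH^\bullet(\gg;A,A)$, namely $[E\cup D]=(-1)^{(|E|+1)(|D|+1)}[D\cup E]$. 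This is a separate input coming from the Gerstenhaber structure. At cochain level it is witnessed by $D\bullet E$: for cocycles, Lemma~\ref{lem_bullet} gives $\qq\{D,E\}+(-1)^{|D||E|}\qq\{E,D\}=-\d(D\bullet E)$. Without it you only get $\i_D\i_E=\pm\i_{E\cup D}$ on homology, not the stated $\i_D\i_E=\i_{D\cup E}$.
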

\begin{proof}
    We claim that
    \[\i_{E\cup D}\simeq (-1)^{(|D|+1)(|E|+1)}\i_D\i_E\]
 Define the homotopy operator as in \cite{chen2022note}, which is a straightforward generalization from \cite{tamarkintsy2005ring,Tsygan2004},
  \[K(E,D)(y_{[a]};x_{[d]})=\sum(-1)^\k \qq(y_{I_1};x_{J_7}\otimes x_{J_1}\otimes E(y_{I_2};x_{J_2})\otimes x_{J_3}\otimes D(y_{I_3};x_{J_4})\otimes x_{J_5})\otimes x_{J_6}\]
    where the summation is over $J_1\dscup J_2\dscup J_3\dscup J_4\dscup J_5\dscup J_6\dscup J_7=[d]$ and $I_1\sqcup I_2\sqcup I_3=[a]$, with $J_1$ being nonempty. The claim follows from the chain level identity
    \[(-1)^{|D||E|+|D|}\i_D\circ \i_{E}+(-1)^{|E|}\i_{E\cup D}+[K(E,D),b]_\circ+(-1)^{|D|}K(E,\d D)+(-1)^{|D|+|E|}K(\d E,D)=0\]

To this end, we simply act the LHS on $(y_{[a]}\otimes x_{[d]})$ and expand every term. We obtain
\begin{align}
    (-1)^{|D||E|+|D|}\i_D\circ \i_E=&(-1)^{|D|+|E|}\times \notag \\\sum (-&1)^\k \textcolor{blue}{y_{I_1}\otimes \qq(y_{I_2};x_{J_9}\otimes x_{J_1}\otimes \qq(y_{I_3};x_{J_2}\otimes E(y_{I_4};x_{J_3})\otimes x_{J_4}\otimes D(y_{I_5};x_{J_5})\otimes x_{J_6})\otimes x_{J_7})\otimes x_{J_8}}\label{cap3.16}
\end{align}
\begin{align}
    (-1)^{|E|}\i_{E\cup D}=& (-1)^{|E|+|D|}\times \notag \\\sum (-&1)^\k \textcolor{blue}{y_{I_1}\otimes \qq(y_{I_2};x_{J_8}\otimes \qq(y_{I_3};x_{J_9}\otimes x_{J_1}\otimes E(y_{I_4};x_{J_2})\otimes x_{J_3})\otimes x_{J_4}\otimes D(y_{I_5};x_{J_5})\otimes x_{J_6})\otimes x_{J_7}}\label{cap3.17}
\end{align}
The commutator expands as follows 
\begin{align}
    K(E,D)&\circ b(y_{[a]};x_{[d]})=\notag \\ \sum (-&1)^{\k}\textcolor{blue}{y_{I_1}\otimes \qq\left(y_{I_2};x_{J_9}\otimes x_{J_1}\otimes E(y_{I_3};x_{J_2})\otimes x_{J_3}\otimes D(y_{I_4};x_{J_4})\otimes x_{J_5}\otimes \qq(y_{I_5};x_{J_6}\right)\otimes x_{J_7})\otimes x_{J_8}}\label{cap3.18} \\
    +\sum (&-1)^\k \textcolor{blue}{y_{I_1}\otimes \qq\left(y_{I_2};x_{J_9}\otimes x_{J_1}\otimes E(y_{I_3};x_{J_2})\otimes x_{J_3}\otimes D(y_{I_4};x_{J_4}\otimes \qq(y_{I_5};x_{J_5}\right)\otimes x_{J_6})\otimes x_{J_7})\otimes x_{J_8}}\label{cap3.19} \\
    +\sum (&-1)^{\k+|D|}\textcolor{blue}{y_{I_1}\otimes \qq(y_{I_2};x_{J_9}\otimes x_{J_1}\otimes E(y_{I_3};x_{J_2})\otimes x_{J_3}\otimes \qq(y_{I_5};x_{J_4})\otimes x_{J_5}\otimes D(y_{I_4};x_{J_6})\otimes x_{J_7})\otimes x_{J_8}}\label{cap3.20} \\
    +\sum (&-1)^{\k+|D|}\textcolor{blue}{y_{I_1}\otimes \qq(y_{I_2};x_{J_9}\otimes x_{J_1}\otimes E(y_{I_3};x_{J_2}\otimes \qq(y_{I_5};x_{J_3})\otimes x_{J_4})\otimes x_{J_5}\otimes D(y_{I_4};x_{J_6})\otimes x_{J_7})\otimes x_{J_8}}\label{cap3.21}\\
    +\sum (&-1)^{\k+|D|+|E|}\textcolor{blue}{y_{I_1}\otimes \qq(y_{I_2};x_{J_9}\otimes x_{J_1}\otimes \qq(y_{I_5};x_{J_2})\otimes x_{J_3}\otimes E(y_{I_3};x_{J_4})\otimes x_{J_5}\otimes D(y_{I_4};x_{J_6})\otimes x_{J_7})\otimes x_{J_8}}\label{cap3.22}
    \end{align} 
    \begin{align}
        +\sum (&-1)^{\k}\textcolor{red}{y_{I_1}\otimes \qq(y_{I_2};x_{J_9}\otimes x_{J_1}\otimes E(y_{I_3};x_{J_2})\otimes x_{J_3}\otimes D(y_{I_4};x_{J_4})\otimes x_{J_5})\otimes x_{J_6}\otimes \qq(y_{I_5};x_{J_7})\otimes  x_{J_8}}\label{cap3.23}\\
    +\sum (&-1)^{\k+|D|+|E|}\textcolor{blue}{y_{I_1}\otimes \qq(y_{I_2};x_{J_7}\otimes \qq(y_{I_5};x_{J_8})\otimes x_{J_9}\otimes x_{J_1}\otimes E(y_{I_3};x_{J_2})\otimes x_{J_3}\otimes D(y_{I_4};x_{J_4})\otimes x_{J_5})\otimes x_{J_6}}\label{cap3.24}\\
    +\sum (&-1)^{\k+|E|+|D|}\textcolor{blue}{y_{I_1}\otimes \qq(y_{I_2};x_{J_8}\otimes \qq(y_{I_5};x_{J_9}\otimes x_{J_1})\otimes x_{J_2}\otimes E(y_{I_3};x_{J_3})\otimes x_{J_4}\otimes D(y_{I_4};x_{J_5})\otimes x_{J_6})\otimes x_{J_7}}\label{cap3.25}\\
    +\sum (&-1)^{\k+|E|+|D|}\textcolor{blue}{\ll(y_{I_1})\wedge y_{I_5}\otimes \qq(y_{I_2};x_{J_7}\otimes x_{J_1}\otimes E(y_{I_3};x_{J_2})\otimes x_{J_3}\otimes D(y_{I_4};x_{J_4})\otimes x_{J_5})\otimes x_{J_6}}\label{cap3.26}\\
    +\sum (&-1)^{\k+|E|+|D|+1}\textcolor{purple}{y_{I_1}\otimes \qq(\ll(y_{I_2})\wedge y_{I_5};x_{J_7}\otimes x_{J_1}\otimes E(y_{I_3};x_{J_2})\otimes x_{J_3}\otimes D(y_{I_4};x_{J_4})\otimes x_{J_5})\otimes x_{J_6}}\label{cap3.27}\\
    +\sum (&-1)^{\k+|D|+1}\textcolor{blue}{y_{I_1}\otimes \qq(y_{I_2};x_{J_7}\otimes x_{J_1}\otimes E(\ll(y_{I_3})\wedge y_{I_5};x_{J_2})\otimes x_{J_3}\otimes D(y_{I_4};x_{J_4})\otimes x_{J_5})\otimes x_{J_6}}\label{cap3.28}\\
    +\sum (&-1)^{\k+1}\textcolor{blue}{y_{I_1}\otimes \qq(y_{I_2};x_{J_7}\otimes x_{J_1}\otimes E(y_{I_3};x_{J_2})\otimes x_{J_3}\otimes D(\ll(y_{I_4})\wedge y_{I_5};x_{J_4})\otimes x_{J_5})\otimes x_{J_6}}\label{cap3.29}
\end{align}

We observe that \eqref{cap3.27} is nothing but
$$\sum_{I_1\sqcup I_2\sqcup I_3\sqcup I_4=[a]} (-1)^{|D|+|E|+1+\star}\textcolor{purple}{y_{I_1}\otimes\hat \ll(\qq)(y_{I_2};x_{J_7}\otimes x_{J_1}\otimes E(y_{I_3};x_{J_2})\otimes x_{J_3}\otimes D(y_{I_4};x_{J_4})\otimes x_{J_5})\otimes x_{J_6}}$$
Here $\star=\k(y_{I_1}\wedge y_{I_2};x_{J_7}\otimes x_{J_1}\otimes E(y_{I_3};x_{J_2})\otimes x_{J_3}\otimes D(y_{I_4};x_{J_4})\otimes x_{J_5\dscup J_6})$. 

\begin{align}
    -(-1)^{|D|+|E|+1}b&\circ K(E,D)(y_{[a]};x_{[d]})=\notag \\ \sum (&-1)^{\k+|E|+|D|}\textcolor{blue}{y_{I_1}\otimes \qq\left(y_{I_2};x_{J_8}\otimes \qq(y_{I_5};x_{J_9}\otimes x_{J_1}\otimes E(y_{I_3};x_{J_2})\otimes x_{J_3}\otimes D(y_{I_4};x_{J_4})\otimes x_{J_5})\otimes x_{J_6} \right)\otimes x_{J_7}}\label{cap3.30}\\
    +\sum (&-1)^{\k+1}\textcolor{red}{y_{I_1}\otimes \qq(y_{I_2};x_{J_9}\otimes x_{J_1}\otimes E(y_{I_3};x_{J_2})\otimes x_{J_3}\otimes D(y_{I_4};x_{J_4})\otimes x_{J_5})\otimes x_{J_6}\otimes \qq(y_{I_5};x_{J_7})\otimes x_{J_8}}\label{cap3.31}\\
    +\sum (&-1)^{\k+1+|D|+|E|}\textcolor{blue}{\ll(y_{I_1})\wedge y_{I_5}\otimes \qq(y_{I_2};x_{J_7}\otimes x_{J_1}\otimes E(y_{I_3};x_{J_2})\otimes x_{J_3}\otimes D(y_{I_4};x_{J_4})\otimes x_{J_5})\otimes x_{J_6}}\label{cap3.32}
\end{align}

Observe that \eqref{cap3.31} cancels \eqref{cap3.23}. \eqref{cap3.16}-\eqref{cap3.22}, \eqref{cap3.24}-\eqref{cap3.26}, \eqref{cap3.28}-\eqref{cap3.30} and \eqref{cap3.32} together with $K(E,\d D)$ and $K(\d E,D)$ form exactly the expression $$\sum (-1)^{|D|+|E|+\star}y_{I_1}\otimes\qq \{\qq\}(y_{I_2};x_{J_7}\otimes x_{J_1}\otimes E(y_{I_3};x_{J_2})\otimes x_{J_3}\otimes D(y_{I_4};x_{J_4})\otimes x_{J_5})\otimes x_{J_6}$$ For a fixed decomposition, it cancels \eqref{cap3.27} by OCHA relation. Hence the claim is proved.
    
    The statement then follows by graded commutativity of Yoneda product at cohomology level,
    \[[E\cup D]=(-1)^{(|E|+1)(|D|+1)}[D\cup E]\]
    i.e. $[\i_{[E\cup D]}(a)]=[\i_{[E]}\i_{[D]}(a)]$.
\end{proof}

\subsection{The Lie derivatives}\label{subsec_Lie}

It remains to show the last three identities involving the Lie derivative. 
\begin{defn}
    \[L_D(y_{[a]};x_{[d]})=(-1)^{|D|}\sum (-1)^{\k} y_{I_1}\otimes x_{J_1}\otimes D(y_{I_2};x_{J_2})\otimes x_{J_3}+(-1)^{|D|}\sum (-1)^\k y_{I_1}\otimes D(y_{I_2};x_{J_3}\otimes x_{J_1})\otimes x_{J_2}\]
    where as usual, the summation is over decomposition $J_1\dscup J_2\dscup J_3=[d]$, $I_1\sqcup I_2=[a]$, $J_1$ is nonempty. It is clear that $|L_D|=|D|$.
\end{defn}

Observe that the definition does not depend on the OCHA structure. Hence one can check the identity $[L_D,L_E]=L_{[D,E]}$ in the same way as for associative algebras. We omit it here. 
\begin{prop}
   We have 
    \[[B,\i_D]_\circ+[b,S_D]_\circ-S_{\d D}=L_D\]
    Here \[S_D(y_{[a]};x_{[d]})=\sum(-1)^{\k+|D|}y_{I_1}\otimes \one \otimes x_{J_4}\otimes x_{J_1}\otimes D(y_{I_2};x_{J_2})\otimes x_{J_3}\]
    where $J_1\neq \phi$ as always. We see that $|S_D|=|D|-1$.
\end{prop}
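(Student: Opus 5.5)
The identity will be proved at the chain level, the same way as the preceding lemma $[b,\i_D]_\circ=\i_{\d D}$ and the cap-product homotopy. We apply both sides to a generator $y_{[a]}\otimes x_{[d]}$ of the normalized complex $C_\bullet(\gg;A,A)$, expand every term, and sort the resulting words by pattern. The canonical Koszul sign $\k$ handles the universal signs, so the only signs to track by hand are those from reordering the operation letters $\qq,D,\one,\ll$.

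The left-hand side expands into four groups.
\begin{itemize}
\item $B\circ\i_D$: this places a unit in front and cyclically rotates the output of $\i_D$. The output starts with $\qq(y_{I_2};x_{J_5}\otimes x_{J_1}\otimes D(\cdots)\otimes x_{J_3})$, so after rotation $\qq(\cdots)$ sits in some interior slot after the unit.
\item $\i_D\circ B$: here $\i_D$ acts on $y\otimes\one\otimes x_R\otimes x_L$. The key subcase is when the unit lands inside the first $\qq$ together with $D$. By normalization $D$ kills the unit, and the unitality of the OCHA says only $\qq_{0,2}(\one,-)$ and $\qq_{0,2}(-,\one)$ survive. So the terms where the unit is the sole companion of $D(\cdots)$ in a binary $\qq_{0,2}$ collapse to $\pm y_{I_1}\otimes D(y_{I_2};x_{J_3}\otimes x_{J_1})\otimes x_{J_2}$. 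This is the cyclic half of $L_D$, and it should also produce the part of the linear half of $L_D$ in which the $D$-block is not at the front. Terms in which the unit is an input of a $\qq_{n,k}$ with $n>0$ or $k\ge 3$ vanish by unitality.
\item $[b,S_D]_\circ$: in $b\circ S_D$ the leading unit $\one$ either meets $\qq_{0,2}$ in the first slot or stays put. The $\qq_{0,2}(\one\otimes x_{J_4})$ and $\qq_{0,2}(x\otimes\one)$ collapses should give the remaining linear terms $y_{I_1}\otimes x_{J_1}\otimes D(y_{I_2};x_{J_2})\otimes x_{J_3}$. All other terms of $b\circ S_D$, where $\qq$ acts behind the unit, must cancel against $S_D\circ b$ and against the rotated terms from $B\circ\i_D$ and $\i_D\circ B$.
\item $S_{\d D}$: this cancels the terms in which $\qq$ or $\hat\ll$ is fed into $D$, or in which $D$ is fed into $\qq$ entirely inside the rotated block.
\end{itemize}

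The plan proceeds in four steps.
\begin{enumerate}
\item Treat the $\ll$-terms first. Splitting $b=b_O+b_C$ as before, $[b_C,S_D]_\circ$ gives terms with $\ll(y_{I_1})\wedge y_{I_2}$ in front, which cancel in pairs exactly as \eqref{i3.10} and \eqref{i3.11} do. It also gives terms with $D(\ll(y)\wedge y;\cdots)$, which cancel the $\hat\ll(D)$ part of $S_{\d D}$.
\item Match $D\{\qq\}$ and $\qq\{D\}$ in $S_{\d D}$ against the corresponding patterns from $S_D\circ b_O$ and $b_O\circ S_D$. These are the patterns in which the inner $\qq$ is adjacent to, or contained in, the $D$-block.
\item Show that the patterns with a unit in front and a nested $\qq(\cdots D\cdots)$ appear twice with opposite signs. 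One copy comes from $B\circ\i_D$ and $\i_D\circ B$, the other from $b_O\circ S_D$ and $S_D\circ b_O$. This is the analogue of $Bb+bB=0$ in Remark~\ref{rmk_A_inf_bB}, now with $D$ carried along.
\item Collect the unit-collapse terms and identify them with $L_D$, including its factor $(-1)^{|D|}$.
\end{enumerate}

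I expect the main obstacle to be the third and fourth steps. The difficulty is bookkeeping the boundary cases of the decompositions: $J_1\neq\phi$, empty $J_4$ in $S_D$, and the unit adjacent to $D$ when $|J_1|$ or $|J_4|$ is small. These must be handled so that every collapsed term of $L_D$ is produced exactly once. A further risk is that a leftover term with a unit in an interior slot is only zero in the normalized complex, not in the unnormalized one. So I would work throughout in $C_\bullet(\gg;A,A)$ and drop such terms immediately.
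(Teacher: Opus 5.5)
Your plan follows the paper's proof. Expand on $y_{[a]}\otimes x_{[d]}$, kill unit insertions by strict unitality and normalization, and cancel the two $\ll(y_{I_1})\wedge y_{I_2}$ families against each other. Then cancel $B\circ\i_D$ against the matching nested term of $b\circ S_D$, cancel the disjoint cross terms against $S_D\circ b$, and recognize the $\qq\{D\}$, $D\{\qq\}$, $\hat\ll(D)$ patterns as $S_{\d D}$.

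One prediction in your bookkeeping is wrong, though. $\i_D\circ B$ contributes nothing to the linear half of $L_D$. Since $J_1\neq\phi$, the unit is always inside the outer $\qq$ ahead of $D$, so the only survivor is $\qq_{0,2}(\one,D(\cdots))$. Every surviving term therefore has $D$ in front. These terms are the cyclic half $y_{I_1}\otimes D(y_{I_2};x_{J_3}\otimes x_{J_1})\otimes x_{J_2}$, plus an extra family $y_{I_1}\otimes D(y_{I_2};x_{J_2})\otimes x_{J_3}\otimes x_{J_1}$ that is not part of $L_D$.

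The entire linear half comes from $b\circ S_D$:
\begin{itemize}
\item The $\qq_{0,2}(\one\otimes -)$ collapse with $J_4\neq\phi$ cancels the $\qq_{0,2}(-\otimes\one)$ collapse whose last letter is some $x_j$.
\item The $J_4=\phi$ part is exactly $y_{I_1}\otimes x_{J_1}\otimes D(y_{I_2};x_{J_2})\otimes x_{J_3}$.
\item The remaining case $\qq_{0,2}(D(\cdots)\otimes\one)$, i.e.\ $J_3=\phi$, cancels the extra family from $\i_D\circ B$.
\end{itemize}
With this reallocation your step 4 (collecting all unit-collapse terms into $L_D$) goes through as stated.
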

\begin{rmk}\label{rmk_cart_hom}
        It is obvious that $[B,S_D]=0$ in the normalized Hochschild chain complex. Hence the proposition can be recast in a more familiar way $[b+uB,\i_D+uS_D]=uL_D+\i_{\d D}+uS_{\d D}$. This is the non-commutative analogue of Cartan homotopy formula \cite{Rinehart1963,Getzler2002CARTANHF}.
\end{rmk}
\begin{proof}
    We compute
    \begin{align}
        B\circ \i_D(y_{[a]};x_{[d]})=\sum(-1)^{\k+|D|}\textcolor{RubineRed}{y_{I_1}\otimes \one \otimes x_{J_5}\otimes \qq(y_{I_2};x_{J_6}\otimes x_{J_1}\otimes D(y_{I_3};x_{J_2})\otimes x_{J_3})\otimes x_{J_4}}\label{L3.33}
    \end{align}
    
    \begin{align}
        -(-1)^{|D|+1}\i_D&\circ  B(y_{[a]};x_{[d]})=\notag \\
         \sum (&-1)^{\k+|D|}y_{I_1}\otimes\qq(y_{I_2};x_{J_2}\otimes \one \otimes x_{J_3}\otimes D(y_{I_3};x_{J_4})\otimes x_{J_5})\otimes x_{J_6}\otimes x_{J_1} \notag  \\
         +\sum (&-1)^{\k+|D|}y_{I_1}\otimes \qq(y_{I_2};x_{J_3}\otimes \one \otimes x_{J_4}\otimes D(y_{I_3};x_{J_5})\otimes x_{J_6}\otimes x_{J_1})\otimes x_{J_2}  \notag  \\ 
         +\sum (&-1)^{\k+|D|} y_{I_1}\otimes \qq(y_{I_2};x_{J_4}\otimes \one \otimes x_{J_5}\otimes D(y_{I_3};x_{J_6}\otimes x_{J_1})\otimes x_{J_2})\otimes x_{J_3}  \notag  \\
         +\sum (&-1)^{\k+|D|}  y_{I_1}\otimes \qq(y_{I_2};x_{J_5}\otimes \one \otimes x_{J_6}\otimes x_{J_1}\otimes D(y_{I_3};x_{J_2})\otimes x_{J_3})\otimes x_{J_4} \notag \\
         +\sum (&-1)^{\k+|D|} y_{I_1}\otimes \qq(y_{I_2};x_{J_6}\otimes x_{J_1}\otimes \one \otimes x_{J_2}\otimes D(y_{I_3};x_{J_3})\otimes x_{J_4})\otimes x_{J_5} \notag 
    \end{align}
By strict unitality we see that the only terms that remain are from $I_2=\phi$, i.e. $\sum (-1)^{\k+|D|}y_{I_1}\otimes \qq_{0,2}(\one,D(y_{I_3};x_{J_2}))\otimes x_{J_3}\otimes x_{J_1}$ and $\sum (-1)^{\k+|D|}y_{I_1}\otimes \qq_{0,2}(\one,D(y_{I_3};x_{J_3}\otimes x_{J_1}))\otimes x_{J_2}$. Hence we obtain 
\begin{align}
    (-1)^{|D|}\i_D&\circ B(y_{[a]};x_{[d]})=\notag \\
    \sum (&-1)^{\k+|D|} \textcolor{blue}{y_{I_1}\otimes D(y_{I_2};x_{J_2})\otimes x_{J_3}\otimes x_{J_1}  }+\sum (-1)^{\k+|D|} \textcolor{ForestGreen}{ y_{I_1}\otimes  D(y_{I_2};x_{J_3}\otimes x_{J_1})\otimes x_{J_2}}\label{L3.34}
\end{align}
where $J_1\neq \phi$. 
Similarly, using the unitality of OCHA one computes
\begin{align}
    b\circ S_D=& \notag \\
    \sum (-&1)^{\k+|D|}y_{I_1}\otimes x_{J_4}\otimes x_{J_1}\otimes D(y_{I_2};x_{J_2})\otimes x_{J_3} \label{L3.35}\\
    +\sum (-&1)^{\k+1+|D|}\textcolor{BurntOrange}{y_{I_1}\otimes \one \otimes x_{J_6}\otimes x_{J_1}\otimes \qq(y_{I_2};x_{J_2})\otimes x_{J_3}\otimes  D(y_{I_3};x_{J_4})\otimes x_{J_5}}\label{L3.36}\\
    +\sum (-&1)^{\k+1+|D|}\textcolor{BurntOrange}{y_{I_1}\otimes \one \otimes x_{J_4}\otimes \qq(y_{I_2};x_{J_5})\otimes x_{J_6}\otimes x_{J_1}\otimes  D(y_{I_3};x_{J_2})\otimes x_{J_3}}\label{L3.37}\\
    +\sum (-&1)^{\k+1+|D|}\textcolor{TealBlue}{y_{I_1}\otimes \one\otimes  x_{J_6}\otimes x_{J_1}\otimes \qq(y_{I_2};x_{J_2}\otimes D(y_{I_3};x_{J_3})\otimes x_{J_4})\otimes x_{J_5}}\label{L3.38}\\
    +\sum (-&1)^{\k+1}\textcolor{BurntOrange}{y_{I_1}\otimes \one\otimes  x_{J_6}\otimes x_{J_1}\otimes D(y_{I_3};x_{J_2})\otimes x_{J_3}\otimes \qq(y_{I_2};x_{J_4})\otimes x_{J_5}}\label{L3.39}\\
    +\sum (-&1)^{\k+1+|D|}\textcolor{BurntOrange}{y_{I_1}\otimes \one \otimes x_{J_5}\otimes \qq(y_{I_2};x_{J_6}\otimes x_{J_1})\otimes x_{J_2}\otimes D(y_{I_3};x_{J_3})\otimes x_{J_4}}\label{L3.40}\\
    +\sum (-&1)^{\k+1+|D|}\textcolor{RubineRed}{y_{I_1}\otimes \one \otimes x_{J_5}\otimes \qq(y_{I_2};x_{J_6}\otimes x_{J_1}\otimes D(y_{I_3};x_{J_2})\otimes x_{J_3})\otimes x_{J_4}}\label{L3.41}\\
    +\sum_{J_1\dscup J_2\dscup J_3\dscup \{j\}\dscup J_4=[d]} (-&1)^{\k+1+|D|} y_{I_1}\otimes x_{j}\otimes x_{J_4}\otimes x_{J_1}\otimes D(y_{I_2};x_{J_2})\otimes x_{J_3} \label{L3.42} \\
    +\sum_{J_1\dscup J_2 \dscup J_4=[d]} (-&1)^{\k+|D|+1} y_{I_1}\otimes D(y_{I_2};x_{J_2})\otimes x_{J_4}\otimes x_{J_1}\label{L3.43}\\
    +\sum (-&1)^{\k+|D|+1|}\textcolor{RoyalPurple}{\ll(y_{I_1})\wedge y_{I_2}\otimes \one \otimes x_{J_4}\otimes x_{J_1}\otimes D(y_{I_3};x_{J_2})\otimes x_{J_3}}\label{L3.44}
\end{align}
\eqref{L3.35},\eqref{L3.42} and \eqref{L3.43} simplify into 
\begin{align}
\sum (-1)^{\k+|D|}\textcolor{ForestGreen}{y_{I_1}\otimes x_{J_1}\otimes D(y_{I_2};x_{J_2})\otimes x_{J_3}}+\sum (-1)^{\k+|D|+1}\textcolor{blue}{y_{I_1}\otimes D(y_{I_2};x_{J_2})\otimes x_{J_3}\otimes x_{J_1}} \label{L3.45}
\end{align}
with $J_1\neq \phi$. The second summation cancels the first summation in \eqref{L3.34}, while the first summation combines with the second summation in \eqref{L3.34} into precisely $L_D$. 

\begin{align}
    -(-1)^{|S_D|}S_D&\circ b= \notag \\
    \sum (&-1)^{\k}\textcolor{BurntOrange}{y_{I_1}\otimes \one \otimes x_{J_6}\otimes x_{J_1}\otimes D(y_{I_3};x_{J_2})\otimes x_{J_3}\otimes \qq(y_{I_2};x_{J_4})\otimes x_{J_6}}\label{L3.46}\\
    +\sum (&-1)^{\k}\textcolor{TealBlue}{ y_{I_1}\otimes \one \otimes x_{J_6}\otimes x_{J_1}\otimes D(y_{I_3};x_{J_2}\otimes \qq(y_{I_2};x_{J_3})\otimes x_{J_4})\otimes x_{J_5} }\label{L3.47}\\
    +\sum (&-1)^{\k+|D|} \textcolor{BurntOrange}{y_{I_1}\otimes \one \otimes x_{J_6}\otimes x_{J_1}\otimes \qq(y_{I_2};x_{J_2})\otimes x_{J_3}\otimes D(y_{I_3};x_{J_4})\otimes x_{J_5}}\label{L3.48}\\
    +\sum (&-1)^{\k+|D|} \textcolor{BurntOrange}{y_{I_1}\otimes \one \otimes x_{J_4}\otimes \qq(y_{I_2};x_{J_5})\otimes x_{J_6}\otimes x_{J_1}\otimes D(y_{I_3};x_{J_2})\otimes x_{J_3}}\label{L3.49}\\
    +\sum (&-1)^{\k+|D|}\textcolor{BurntOrange}{y_{I_1}\otimes x_{J_5}\otimes \qq(y_{I_2};x_{J_6}\otimes x_{J_1})\otimes x_{J_2}\otimes D(y_{I_3};x_{J_3})\otimes x_{J_4}}\label{L3.50}\\
    +\sum(&-1)^{\k+|D|}\textcolor{RoyalPurple}{\ll(y_{I_1})\wedge y_{I_2}\otimes \one \otimes x_{J_4}\otimes x_{J_1}\otimes D(y_{I_3};x_{J_2})\otimes x_{J_3}}\label{L3.51}\\
    +\sum (&-1)^{\k+1}\textcolor{TealBlue}{y_{I_1}\otimes \one \otimes x_{J_4}\otimes x_{J_1}\otimes D(\ll(y_{I_2})\wedge y_{I_3};x_{J_2})\otimes x_{J_3}}\label{L3.52}
\end{align}
    Cross terms \eqref{L3.36}, \eqref{L3.37}, \eqref{L3.39}, \eqref{L3.40},\eqref{L3.46} and \eqref{L3.48}-\eqref{L3.50} cancel. \eqref{L3.44} cancels \eqref{L3.51}. \eqref{L3.33} cancels \eqref{L3.41}. \eqref{L3.47} and \eqref{L3.52} cancel with $S_{\d D}$. We are done.
\end{proof}
Finally
\begin{prop}
    We have 
    \[[L_D,\i_E]_\circ-T(\d D,E)+(-1)^{|D|}T(D,\d E)+[b,T(D,E)]_{\circ}=(-1)^{|D|}\i_{[D,E]}\]
    Here \[T(D,E)(y_{[a]};x_{[d]})=\sum (-1)^{|E|+1+\k}y_{I_1}\otimes D(y_{I_2};x_R\otimes x_L\otimes E(y_{I_3};x_{M_1})\otimes a_{M_2})\otimes a_{M_3}\]
    where the decomposition is $L\dscup M_1\dscup M_2\dscup M_3\dscup R=[d]$, $L\neq \phi$. We see that $|T(D,E)|=|D|+|E|$.
\end{prop}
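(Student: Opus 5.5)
We plan to prove the identity by the same word-classification strategy used for $[b,\i_D]_\circ=\i_{\d D}$ and for the homotopy $K(E,D)$. We apply each of the operators $L_D\circ\i_E$, $\i_E\circ L_D$, $b\circ T(D,E)$, $T(D,E)\circ b$, $T(\d D,E)$, $T(D,\d E)$ and $\i_{[D,E]}$ to a generator $y_{[a]}\otimes x_{[d]}$. We expand each result as a sum of words, with sign $(-1)^{\k}$ times a sign recording only the exchanges among the operation letters $\qq,D,E,\ll,\one$. Throughout we decompose $b=b_O+b_C$ as before, and we split $\d D=\qq\{D\}-(-1)^{|D|}D\{\qq\}+(-1)^{|D|}\hat\ll(D)$ into its three pieces, and similarly for $\d E$. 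Working in the normalized complex, we discard terms with a unit in a non-initial slot. The only words that can occur have one of the following shapes:
\begin{itemize}
\item $E$ nested inside $D$;
\item $D$ nested inside $\qq$ next to $E$;
\item $D$ nested inside $E$;
\item $D$ acting on the tensor factors outside the $\qq$-bracket produced by $\i_E$;
\item a $\qq$-insertion inside, or outside, a $D$- or $E$-bracket;
\item an $\ll$ applied to a closed input.
\end{itemize}
We will organize the cancellation according to these shapes.

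The first step handles $[L_D,\i_E]_\circ$ itself. Terms of $L_D\circ\i_E$ in which $D$ acts on factors $x_{J_4}$ outside $\qq(\cdots E\cdots)$ cancel against the corresponding terms of $\i_E\circ L_D$, as in the classical case. The surviving terms have one of three forms: $\qq(\cdots D(\cdots E\cdots)\cdots)$, $\qq(\cdots D\cdots E\cdots)$ where $D$ sits beside $E$ inside the outer $\qq$, and $D(\cdots\qq(\cdots E\cdots)\cdots)$ where the whole $\i_E$-bracket is swallowed by $D$. Among these, the terms with $E$ inside $D$ and $D$ inside $E$ should assemble, up to the sign $(-1)^{|D|}$, into $\i_{D\{E\}-(-1)^{|D||E|}E\{D\}}=\i_{[D,E]}$. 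The remaining terms, with $D$ beside $E$ or with $\qq$ inside $D$, are exactly the defects that $T(D,E)$ is designed to absorb.

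The second step expands $[b,T(D,E)]_\circ$. The $b_C$ contributions produce $\ll$ acting on the outer closed inputs in both $b\circ T$ and $T\circ b$, and these cancel pairwise. The only exceptions are $\ll$ applied to inputs of $D$ or $E$, which give $\hat\ll(D)$ and $\hat\ll(E)$ and are matched by the $\hat\ll$-parts of $T(\d D,E)$ and $T(D,\d E)$. In the $b_O$ contributions, terms where $\qq$ acts disjointly from the $D$-bracket cancel between the two composites. Terms where $\qq$ is inserted inside $D$ next to $E$, or inside $E$, are matched by the $D\{\qq\}$ and $E\{\qq\}$ parts of $T(\d D,E)$ and $T(D,\d E)$. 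Terms where $D(\cdots)$ is fed into $\qq$ come from $b_O\circ T$ and from the $\qq\{D\}$-part of $T(\d D,E)$. Combined, they should reproduce precisely the leftover words of $[L_D,\i_E]_\circ$ with opposite sign. The cyclic terms of $b_O$, which wrap $x_{J_{\mathrm{last}}}$ around to the front, must be matched with the cyclic terms in $T$ and $L_D$. This is routine bookkeeping with the constraint $L\neq\phi$ handled as in \eqref{L3.42}–\eqref{L3.43}.

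I expect the main obstacle to be the pieces that involve the OCHA relation and the unit. The $\qq\{D\}$-part of $T(\d D,E)$ yields words $D$-inside-$\qq$ with $E$ elsewhere. Unlike in the proof for $K(E,D)$, these are not a full $\qq\{\qq\}$ pattern, so the OCHA relation is not needed there. But one must check that no residual $\hat\ll(\qq)$ terms appear, in contrast with \eqref{cap3.27}. I would verify this first by classifying all words containing two $\qq$'s and checking that each such word occurs exactly twice, with opposite operation-exchange signs. A second delicate point is the degenerate case in which the outer $\qq$ of $\i_E$ reduces, via strict unitality, to $\qq_{0,2}(\one,-)$ acting on the result of $D$. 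This creates the boundary terms that turn $[L_D,\i_E]$ into $\i_{[D,E]}$ rather than $0$, and I would track these separately, as was done for $\i_D\circ B$.
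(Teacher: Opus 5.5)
Your method is the paper's: expand every term on $y_{[a]}\otimes x_{[d]}$, split $\d D$ and $\d E$ into their three pieces, and match words. Your treatment of the $\ll$-terms, of the terms where $\qq$ is disjoint from $D$, and of the $E\{\qq\}$-part against \eqref{li3.77} is right. However, the bookkeeping you describe for the right-hand side does not close.

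\textbf{The $D\{E\}$-half of the answer.} $[L_D,\i_E]_\circ$ contains no word of the form $\qq(\cdots D(\cdots E\cdots)\cdots)$. In $L_D\circ\i_E$ the cochain $D$ acts on the finished chain $y_{I_1}\otimes\qq(\cdots E\cdots)\otimes x_{J_4}$, so it either lies in $x_{J_4}$ or swallows the whole $\qq$-bracket. In $\i_E\circ L_D$ the value $D(\cdots)$ is already a single tensor factor, so $E$ can contain it or sit beside it, but never inside it. Hence the only nested piece of $(-1)^{|D|}\i_{[D,E]}$ produced directly is the $E\{D\}$-half \eqref{li3.57}.

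The $D\{E\}$-half \eqref{li3.80} is a residue of exactly the step you claim is exact. The $\qq\{D\}$-part \eqref{li3.73} of $-T(\d D,E)$ contains all placements of $D$ among $x_R\otimes x_L\otimes E(\cdots)\otimes x_{M_2}$.
\begin{itemize}
\item The words with $D$ beside $E$ are cancelled by \eqref{li3.56} and \eqref{li3.58}--\eqref{li3.60}.
\item The words in which $D$ contains both the first open input and $E$ are cancelled by \eqref{li3.62} from $b\circ T(D,E)$.
\item The words $\qq(\cdots x_{J_1}\otimes D(\cdots E\cdots)\cdots)$, with the first open input outside $D$, are produced by nothing else and survive.
\end{itemize}
So your sentence that these terms ``reproduce precisely the leftover words of $[L_D,\i_E]_\circ$ with opposite sign'' is false, and the discrepancy is half of the answer. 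This is precisely what the homotopy $T(D,E)$, with $E$ nested in $D$, is for.

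Similarly, the $D\{\qq\}$-part \eqref{li3.74} is cancelled only after combining \eqref{li3.65}--\eqref{li3.68} with two further terms: \eqref{li3.54}, and the $\qq\{E\}$-part \eqref{li3.76} of $(-1)^{|D|}T(D,\d E)$. Your plan never uses the latter.

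\textbf{The ``delicate points'' you single out do not occur.} Every term of the identity is linear in $(\qq,\ll)$: $\i_E$, $b$ and $\d$ each contribute exactly one structure map, and $L_D$ and $T$ contribute none. So there are no words with two $\qq$'s, and no OCHA relation enters. Also, none of $L_D$, $\i_E$, $T(D,E)$, $b$ inserts $\one$, so strict unitality plays no role here, in contrast with the identity for $[B,\i_D]_\circ$. In particular, $\i_{[D,E]}$ is not produced by $\qq_{0,2}(\one,-)$ degenerations; it comes from the nested words \eqref{li3.57} and \eqref{li3.80} as described above.

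Followed as written, your plan would search for a unit mechanism that is absent and would hit an unexplained mismatch at the $\qq\{D\}$ step. With the accounting corrected as above, your word classification becomes the paper's proof.
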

\begin{proof}
    Expanding, we see that 
    \begin{align}
        L_D&\circ \i_E(y_{[a]};x_{[d]})=\notag \\
         \sum (&-1)^{\k+|D||E|+|E|} \textcolor{NavyBlue}{y_{I_1}\otimes \qq(y_{I_2};x_{J_7}\otimes x_{J_1}\otimes E(y_{I_3};x_{J_2})\otimes x_{J_3})\otimes x_{J_4}\otimes D(y_{I_4};x_{J_5})\otimes x_{J_6}}\label{li3.53} \\
         +\sum (&-1)^{\k+|E|+|D|}\textcolor{RubineRed}{y_{I_1}\otimes D(y_{I_2};x_{J_6}\otimes \qq(y_{I_3};x_{J_7}\otimes x_{J_1}\otimes E(y_{I_4};x_{J_2})\otimes x_{J_3})\otimes x_{J_4})\otimes x_{J_5}}\label{li3.54}
    \end{align}
\begin{align}
    -(-1)^{(|E|+1)|D|} \i_E&\circ L_D(y_{[a]};x_{[d]})= \notag \\
     \sum (&-1)^{\k+|E||D|+|E|+1}\textcolor{NavyBlue}{y_{I_1}\otimes \qq(y_{I_2};x_{J_7}\otimes x_{J_1}\otimes E(y_{I_3};x_{J_2})\otimes x_{J_3})\otimes x_{J_4}\otimes D(y_{I_4};x_{J_5})\otimes x_{J_6} } \label{li3.55} \\
     +\sum (&-1)^{\k+|E||D|+|E|+1}\textcolor{BurntOrange}{y_{I_1}\otimes \qq(y_{I_2};x_{J_7}\otimes x_{J_1}\otimes E(y_{I_3};x_{J_2})\otimes x_{J_3}\otimes D(y_{I_4};x_{J_4})\otimes x_{J_5})\otimes x_{J_6}}\label{li3.56}\\
     +\sum (&-1)^{\k+|E||D|+|E|+1}y_{I_1}\otimes \qq(y_{I_2};x_{J_7}\otimes x_{J_1}\otimes E(y_{I_3};x_{J_2}\otimes D(y_{I_4};x_{J_3})\otimes x_{J_4})\otimes x_{J_5})\otimes x_{J_6}\label{li3.57}\\
     +\sum (&-1)^{\k+|E|+1}\textcolor{BurntOrange}{y_{I_1}\otimes \qq(y_{I_2};x_{J_7}\otimes x_{J_1}\otimes D(y_{I_3};x_{J_2})\otimes x_{J_3}\otimes E(y_{I_4};x_{J_4})\otimes x_{J_5})\otimes x_{J_6}}\label{li3.58}\\
     +\sum (&-1)^{\k+|E|+1}\textcolor{BurntOrange}{y_{I_1}\otimes \qq(y_{I_2};x_{J_5}\otimes D(y_{I_3};x_{J_6})\otimes x_{J_7}\otimes x_{J_1}\otimes  E(y_{I_4};x_{J_2})\otimes x_{J_3})\otimes x_{J_4}}\label{li3.59}\\
     +\sum (&-1)^{\k+|E|+1}\textcolor{BurntOrange}{y_{I_1}\otimes \qq(y_{I_2};x_{J_6}\otimes D(y_{I_3};x_{J_7}\otimes x_{J_1})\otimes x_{J_2}\otimes E(y_{I_4};x_{J_3})\otimes x_{J_4})\otimes x_{J_5}}\label{li3.60}
\end{align}
and  
\begin{align}
    [b,T(D,E)]&_\circ = \notag \\
    \sum (&-1)^{\k+1+|D|}\textcolor{Maroon}{y_{I_1}\otimes D(y_{I_2};x_{J_7}\otimes x_{J_1}\otimes E(y_{I_3};x_{J_2})\otimes x_{J_3})\otimes x_{J_4}\otimes \qq(y_{I_4};x_{J_5})\otimes x_{J_6}}\label{li3.61} \\
    +\sum (&-1)^{\k+|E|+1}\textcolor{BurntOrange}{y_{I_1}\otimes \qq(y_{I_2};x_{J_6}\otimes D(y_{I_3};x_{J_7}\otimes x_{J_1}\otimes E(y_{I_4};x_{J_2})\otimes x_{J_3})\otimes x_{J_4})\otimes x_{J_5}}\label{li3.62}\\
    +\sum (&-1)^{\k+|D|}\textcolor{Maroon}{y_{I_1}\otimes D(y_{I_2};x_{J_7}\otimes x_{J_1}\otimes E(y_{I_3};x_{J_2})\otimes x_{J_3})\otimes x_{J_4}\otimes \qq(y_{I_4};x_{J_5})\otimes x_{J_6}}\label{li3.63}\\
    +\sum (&-1)^{\k+|D|}\textcolor{ForestGreen}{y_{I_1}\otimes D(y_{I_2};x_{J_7}\otimes x_{J_1}\otimes E(y_{I_3};x_{J_2}\otimes \qq(y_{I_4};x_{J_3})\otimes x_{J_4})\otimes x_{J_5})\otimes  x_{J_6}}\label{li3.64}\\
    +\sum (&-1)^{\k+|D|+|E|}\textcolor{RubineRed}{y_{I_1}\otimes D(y_{I_2};x_{J_7}\otimes x_{J_1}\otimes \qq(y_{I_3};x_{J_2})\otimes x_{J_3}\otimes E(y_{I_4};x_{J_4}) \otimes x_{J_5})\otimes x_{J_6}}\label{li3.65}\\
    +\sum (&-1)^{\k+|D|}\textcolor{RubineRed}{y_{I_1}\otimes D(y_{I_2};x_{J_7}\otimes x_{J_1}\otimes E(y_{I_3};x_{J_2})\otimes x_{J_3}\otimes \qq(y_{I_4};x_{J_4})\otimes x_{J_5})\otimes x_{J_6}}\label{li3.66}\\
    +\sum (&-1)^{\k+|D|+|E|}\textcolor{RubineRed}{y_{I_1}\otimes D(y_{I_2};x_{J_5}\otimes \qq(y_{I_3};x_{J_6})\otimes x_{J_7}\otimes x_{J_1}\otimes E(y_{I_4};x_{J_2})\otimes x_{J_3})\otimes x_{J_4}}\label{li3.67}\\
     +\sum (&-1)^{\k+|D|+|E|}\textcolor{RubineRed}{y_{I_1}\otimes D(y_{I_2};x_{J_6}\otimes \qq(y_{I_3};x_{J_7}\otimes x_{J_1})\otimes x_{J_2}\otimes E(y_{I_4};x_{J_3})\otimes x_{J_4})\otimes x_{J_5}}\label{li3.68}\\
     +\sum(&-1)^{|E|+\k} \textcolor{red}{\ll(y_{I_1})\wedge y_{I_2}\otimes D(y_{I_3};x_{J_5}\otimes x_{J_1}\otimes E(y_{I_4};x_{J_2})\otimes x_{J_3})\otimes x_{J_4}}\label{li3.69}\\
     +\sum(&-1)^{1+|E|+\k}\textcolor{red}{\ll(y_{I_1})\wedge y_{I_2}\otimes D(y_{I_3};x_{J_5}\otimes x_{J_1}\otimes E(y_{I_4};x_{J_2})\otimes x_{J_3})\otimes x_{J_4}}\label{li3.70}\\
     +\sum(&-1)^{1+|D|+|E|+\k} \textcolor{red}{y_{I_1}\otimes D(\ll (y_{I_2})\wedge y_{I_3};x_{J_5}\otimes x_{J_1}\otimes E(y_{I_4};x_{J_2})\otimes x_{J_3})\otimes x_{J_4}}\label{li3.71}\\
     +\sum(&-1)^{1+|D|+\k}\textcolor{red}{\ll (y_{I_1})\otimes D(y_{I_2};x_{J_5}\otimes x_{J_1}\otimes E(\ll(y_{I_3})\wedge y_{I_4};x_{J_2})\otimes x_{J_3})\otimes x_{J_4}}\label{li3.72}
\end{align}
    
    For completeness we expand the homotopy operators as well
    \begin{align}
        -T(\d D,E)&(y_{[a]};x_{[d]})= \notag \\
        \sum &(-1)^{|E|+\star_1}\textcolor{Plum}{y_{I_1}\otimes \qq\{D\}(y_{I_2};x_{J_5}\otimes x_{J_1}\otimes E(y_{I_3};x_{J_2})\otimes x_{J_3})\otimes x_{J_4}}\label{li3.73}\\
        +\sum &(-1)^{|D|+|E|+\star_1+1}\textcolor{Cerulean}{y_{I_1}\otimes D\{\qq\}(y_{I_2};x_{J_5}\otimes x_{J_1}\otimes E(y_{I_3};x_{J_2})\otimes x_{J_3})\otimes x_{J_4}}\label{li3.74}\\
         +\sum &(-1)^{|E|+|D|+\star_1}\textcolor{Red}{y_{I_1}\otimes \hat \ll(D)(y_{I_2};x_{J_5}\otimes x_{J_1}\otimes E(y_{I_3};x_{J_2})\otimes x_{J_3})\otimes x_{J_4}}\label{li3.75}
    \end{align}
    Here $\star_1=\k(y_{I_1}\wedge y_{I_2}\otimes x_{J_5}\otimes x_{J_1}\otimes E(y_{I_3};x_{J_2})\otimes x_{J_3\dscup J_4})+(1+|D|)|y_{I_1}|$,  
    \begin{align}
        (-1)^{|D|}T(D,\d E)&(y_{[a]};x_{[d]})=\notag \\
        \sum &(-1)^{|D|+|E|+\k} \textcolor{RubineRed}{y_{I_1}\otimes D(y_{I_2};x_{J_7}\otimes x_{J_1}\otimes \qq(y_{I_3};x_{J_2}\otimes E(y_{I_4};x_{J_3})\otimes x_{J_4})\otimes x_{J_5})\otimes x_{J_6} }\label{li3.76}\\
        +\sum &(-1)^{|D|+\k+1} \textcolor{ForestGreen}{ y_{I_1}\otimes D(y_{I_2};x_{J_7}\otimes x_{J_1}\otimes E(y_{I_3};x_{J_2}\otimes \qq(y_{I_4};x_{J_3})\otimes x_{J_4})\otimes x_{J_5})\otimes x_{J_6}} \label{li3.77}\\
         +\sum &(-1)^{|D|+\k}\textcolor{red}{y_{I_1}\otimes D(y_{I_2};x_{J_5}\otimes x_{J_1}\otimes E(\ll(y_{I_3})\wedge y_{I_4};x_{J_2})\otimes x_{J_3})\otimes x_{J_4}}\label{li3.78}
    \end{align}
    \eqref{li3.53} cancels \eqref{li3.55}. \eqref{li3.64} cancels \eqref{li3.77}. \eqref{li3.61} cancels \eqref{li3.63}. \eqref{li3.54}, \eqref{li3.65}-\eqref{li3.68} and \eqref{li3.76} combine into
    \[\sum (-1)^{\star_1+|D|+|E|}\textcolor{Cerulean}{y_{I_1}\otimes D\{\qq\}(y_{I_2};x_{J_5}\otimes x_{J_1}\otimes E(y_{I_3};x_{J_2})\otimes x_{J_3})\otimes x_{J_4}}\]
    which cancels \eqref{li3.74}. \eqref{li3.56},\eqref{li3.58}-\eqref{li3.60} and \eqref{li3.62} combine into 
    \begin{align}
        \sum &(-1)^{|E|+1+\star_1}\textcolor{Plum}{y_{I_1}\otimes \qq\{D\}(y_{I_2};x_{J_5}\otimes x_{J_1}\otimes E(y_{I_3};x_{J_2})\otimes x_{J_3})\otimes x_{J_4}}\label{li3.79}\\
        +\sum &(-1)^{|E|+\k} y_{I_1}\otimes \qq(y_{I_2};x_{J_7}\otimes x_{J_1}\otimes D(y_{I_3};x_{J_2}\otimes E(y_{I_4};x_{J_3})\otimes x_{J_4})\otimes x_{J_5})\otimes x_{J_6}\label{li3.80}
    \end{align}
    \eqref{li3.79} then cancels \eqref{li3.73}. Terms involving $\ll$ are \eqref{li3.69}-\eqref{li3.72},\eqref{li3.75} and \eqref{li3.78}, which cancel. We finally obtain 
\begin{align}
     \sum &(-1)^{\k+|E|} y_{I_1}\otimes \qq(y_{I_2};x_{J_7}\otimes x_{J_1}\otimes D(y_{I_3};x_{J_2}\otimes E(y_{I_4};x_{J_3})\otimes x_{J_4})\otimes x_{J_4})\otimes x_{J_5}\notag \\
     +\sum & (-1)^{\k+|E||D|+|E|+1}y_{I_1}\otimes \qq(y_{I_2};x_{J_7}\otimes x_{J_1}\otimes E(y_{I_3};x_{J_2}\otimes D(y_{I_4};x_{J_3})\otimes x_{J_4})\otimes x_{J_5})\otimes x_{J_6}\notag 
\end{align}
which is $(-1)^{|D|}\i_{[D,E]}$. 
\end{proof}

We summarize the results in this section as follows. Let $(C_\bullet(\gg;A,A),b)$ be the normalized open-closed Hochschild chain complex, $(C^\bullet(\gg;A,A),\d)$ be the normalized open-closed Hochschild cochain complex, both associated to an OCHA $(\gg,A;\ll,\qq)$. For every open-closed cochain $D$ of homogeneous degree $|D|$, one can define a contraction operator $\i_D:C_\bullet(\gg;A,A)\to C_{\bullet+|D|+1} (\gg;A,A)$, a Lie derivative $L_D:C_\bullet(\gg;A,A)\to C_{\bullet+|D|} (\gg;A,A)$. Then
\begin{thm} \label{thm}
     When $D,E$ are Hochschild cocycles, the operators $B,\i_D$ and $L_D$ satisfy the Cartan-type identities up to explicit chain-level homotopies
\[[b,\i_D]_\circ=\i_{\d D},\quad \i_{E\cup D}\simeq (-1)^{(|D|+1)(|E|+1)}\i_D\i_E, \]
\[\quad [B,\i_D]_\circ\simeq L_D,\quad [L_D,L_E]_\circ=L_{[D,E]},\quad  [L_D,\i_E]_\circ \simeq (-1)^{|D|}\i_{[D,E]} \]
\end{thm}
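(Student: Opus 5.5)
Theorem \ref{thm} is a summary of the chain-level identities already established in Section \ref{sec_cal}. The plan is to specialize each of those identities to cocycles $D,E$, i.e. $\d D=\d E=0$, and read off the statement.

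First, the identity $[b,\i_D]_\circ=\i_{\d D}$ holds for arbitrary homogeneous $D$ by the lemma of Subsection \ref{subsec_mod}, so it needs no further argument. For the cup product, take the chain-level identity
\[(-1)^{|D||E|+|D|}\i_D\circ \i_{E}+(-1)^{|E|}\i_{E\cup D}+[K(E,D),b]_\circ+(-1)^{|D|}K(E,\d D)+(-1)^{|D|+|E|}K(\d E,D)=0\]
from the proof of the module structure. When $\d D=\d E=0$ the last two terms drop, and $\i_{E\cup D}+(-1)^{|D||E|+|D|+|E|}\i_D\i_E$ is the $b$-commutator $\pm[K(E,D),b]_\circ$. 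Since $(-1)^{|D||E|+|D|+|E|}=-(-1)^{(|D|+1)(|E|+1)}$, this exhibits $K(E,D)$, up to sign, as an explicit homotopy for $\i_{E\cup D}\simeq(-1)^{(|D|+1)(|E|+1)}\i_D\i_E$.

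Next, the proposition of Subsection \ref{subsec_Lie} gives $[B,\i_D]_\circ+[b,S_D]_\circ-S_{\d D}=L_D$. For a cocycle $D$ this reduces to $[B,\i_D]_\circ-L_D=-[b,S_D]_\circ$, so $S_D$ is the homotopy. In the same way, the last proposition gives
\[[L_D,\i_E]_\circ-(-1)^{|D|}\i_{[D,E]}=-[b,T(D,E)]_\circ\]
once $\d D=\d E=0$, so $T(D,E)$ is the homotopy.

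The identity $[L_D,L_E]_\circ=L_{[D,E]}$ involves only the brace structure and not $\qq$ or $\ll$, as remarked after the definition of $L_D$. I would verify it exactly as in the associative/$A_\infty$ case. Expand $L_D L_E$: it consists of terms with $D$ and $E$ in disjoint positions, including wrapped positions, and terms with $E$ nested inside $D$. The disjoint terms are symmetric up to the Koszul sign $(-1)^{|D||E|}$ and cancel in the graded commutator, while the nested terms reassemble into $L_{D\{E\}}-(-1)^{|D||E|}L_{E\{D\}}$.

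The closed inputs add no difficulty here. The $y$'s are merely distributed by shuffles $I_1\sqcup I_2\sqcup I_3$, and the sign bookkeeping is handled by the canonical Koszul sign $\k$ together with the extra sign from exchanging the operators $D$ and $E$. This sign bookkeeping is the only step where care is really needed, specifically the wrapped terms where the nonempty block $J_1$ migrates into $D$ or $E$. I would organize the terms by their associated words using the sign rule of Section \ref{sec_pre}, so that each cancellation reduces to matching words.
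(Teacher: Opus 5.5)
Your proposal is correct and is essentially the paper's own argument: Theorem \ref{thm} is simply the specialization to $\d D=\d E=0$ of the chain-level identities of Section \ref{sec_cal}, with $K(E,D)$, $S_D$ and $T(D,E)$ serving as the homotopies, and your sign bookkeeping (e.g.\ $(-1)^{|D||E|+|D|+|E|}=-(-1)^{(|D|+1)(|E|+1)}$) checks out. The paper likewise omits the on-the-nose identity $[L_D,L_E]_\circ=L_{[D,E]}$ as the standard associative-case computation, and your sketch of it correctly supplies the missing step: disjoint insertions cancel in the graded commutator, and nested ones assemble into $L_{D\{E\}}-(-1)^{|D||E|}L_{E\{D\}}$.
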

    Here $\simeq$ means equal up to chain homotopy $[b,\cO]_\circ$ for some multi-linear operator $\cO$ on $C_\bullet(\gg;A,A)$. These chain level identities immediately imply the following

\begin{cor}\label{cor_hom}
 There is a calculus structure for an OCHA $(\gg,A)$ with the following data
 \begin{itemize}
        \item  $\mathcal V^\bullet\coloneqq HH^\bullet(\gg;A,A)\coloneqq H^\bullet((C^\bullet(\gg;A,A),\d))$,
        \item  $\O^\bullet \coloneqq HH_\bullet(\gg;A,A)\coloneqq H^\bullet((C_\bullet(\gg;A,A),b)$,
        \item  $B,\i_{[D]},L_{[D]}$ are induced from the chain level maps that appear in Theorem \ref{thm}.
    \end{itemize}
\end{cor}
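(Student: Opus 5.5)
The plan is to deduce Corollary \ref{cor_hom} formally from the chain-level identities of Theorem \ref{thm}, together with the earlier facts that $b^2=0$, $B^2=0$, $Bb+bB=0$ on the normalized complex, and that $(HH^\bullet(\gg;A,A),\cup,[-,-])$ is a Gerstenhaber algebra (from the BV structure in \cite{yuan2024openclosedstringanaloguehochschild,yuan2025cyclicbracerelationbv}). No new computation should be needed; the work is checking that every operator descends to (co)homology and that each axiom of Definition \ref{def_cal} is an identity from Theorem \ref{thm} read modulo homotopies of the form $[b,\cO]_\circ$.

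\textbf{Step 1: the operators descend.} Since $Bb=-bB$, $B$ induces an operator on $HH_\bullet(\gg;A,A)$ of degree $-1$. It squares to zero because $B^2=0$ at chain level. By $[b,\i_D]_\circ=\i_{\d D}$, a cocycle $D$ gives a chain map $\i_D$, and a coboundary $D=\d F$ gives $\i_D=[b,\i_F]_\circ$, which is null-homotopic. Hence $\i_{[D]}$ is well defined. For $L_D$ I would either verify $[b,L_D]_\circ=L_{\d D}$ directly, or rather deduce it. From the Proposition in \S\ref{subsec_Lie},
\[L_D=[B,\i_D]_\circ+[b,S_D]_\circ-S_{\d D}.\]
Applying $[b,-]_\circ$, using the graded Jacobi identity and $[b,b]_\circ=0$, $[b,B]_\circ=0$, and then $[b,\i_D]_\circ=\i_{\d D}$, one gets $[b,L_D]_\circ=L_{\d D}$ up to sign. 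The same formula shows that on cohomology $L_{[D]}=[B,\i_{[D]}]$, since for a cocycle the remaining terms are a commutator with $b$. This depends only on $[D]$.

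\textbf{Step 2: the axioms.} Every relation ``$\simeq$'' in Theorem \ref{thm} is an equality of induced maps on $HH_\bullet$, because $[b,\cO]_\circ$ maps cycles to boundaries. The relation $\i_{E\cup D}\simeq\pm\i_D\i_E$ and graded commutativity of $\cup$ on cohomology give $\i_{[D]}\i_{[E]}=\i_{[D]\cup[E]}$; this was already done in the module Proposition, and it yields the module over $(\mathcal V^{\bullet-1},\cup)$. The relation $[L_D,L_E]_\circ=L_{[D,E]}$ gives the Lie module structure. The relations $[L_D,\i_E]_\circ\simeq(-1)^{|D|}\i_{[D,E]}$ and $[B,\i_D]\simeq L_D$ give the remaining compatibilities. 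The last axiom, $L_{D\cup E}=L_D\i_E+(-1)^{|D|+1}\i_DL_E$, follows formally on cohomology. Write $L_{D\cup E}=[B,\i_D\i_E]$ (up to the sign fixed in the module step), expand by the graded Leibniz rule for $[B,-]$, and substitute $[B,\i_D]=L_D$ and $[B,\i_E]=L_E$.

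\textbf{Main obstacle.} The only delicate point is sign and degree bookkeeping. Here $|\i_D|=|D|+1$ while $|L_D|=|D|$, and the paper uses the shifted grading. So the Leibniz expansion in the last axiom, and the identification of $\mathcal V^{\bullet-1}$ in Definition \ref{def_cal}, must be matched against the sign $(-1)^{(|D|+1)(|E|+1)}$ in Theorem \ref{thm}. I would fix these conventions first by checking the $A_\infty$ case ($\gg=0$), where the result is \cite{chen2022note}, and then carry the identical formal manipulation over.
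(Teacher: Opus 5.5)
Your proposal is correct and is the same formal deduction the paper has in mind. The chain-level identities of Theorem~\ref{thm} hold up to homotopies of the form $[b,\cO]_\circ$, so they descend to $HH_\bullet(\gg;A,A)$, and the axiom for $L_{D\cup E}$ then follows from $[B,\i_D]=L_D$ and $\i_{D\cup E}=\i_D\i_E$ by the graded Leibniz rule. Your derivation of $[b,L_D]_\circ=-L_{\d D}$ from the Proposition in \S\ref{subsec_Lie} also supplies the well-definedness of $L_{[D]}$ on homology, which the paper leaves implicit.
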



\section{Getzler-Gauss-Manin connection}\label{sec_ggm}
In this section we show the cyclic-triviality of Getzler-Gauss-Manin connections on a family of OCHAs, as an application of the calculus structure we obtained in the previous section. Now we take $k=\C$. To keep notations comparable with literature, we introduce the operation on $C^\bullet(\gg;A,A)$ as $D\bullet E=D\{E\}$. We use filled circle to distinguish it with the ordinary composition. The proof in this section will be rather short, as it is basically an exercise following \cite{Getzler2002CARTANHF}, using our sign rules.
 
Let $(\gg,A)$ be an OCHA. In particular, $A$ is a unital $A_\infty$-algebra under the structural morphism $\qq_{0,-}$. Recall that a unital OCHA structure is an element $\qq=\qq_1+\qq_2+...\in C_{un}^1(\gg;A,A)$ such that $\qq\{\qq\}=\hat \ll (\qq)$. Following the convention, we make the following definitions of cyclic chain complex and its variants
\begin{defn}
        \begin{align}
        CC_\bullet ^-(\gg;A,A)&\coloneqq (C_\bullet(\gg;A,A)[[u]],b+uB) \\
        CC_\bullet ^{per}(\gg;A,A)&\coloneqq (C_\bullet(\gg;A,A)((u)),b+uB)\\
        CC_\bullet(\gg;A,A) &\coloneqq (C_\bullet(\gg;A,A)((u))/uC_\bullet(\gg;A,A)[[u]],b+uB)
    \end{align}
    Here $u$ is a formal variable of degree 2.
\end{defn}
We will use the periodic cyclic homology so as to divide the formal variable $u$. Now, consider a smooth parameter space $\cM$. Let $\cE^\bullet$ be the free graded $\cO_\cM$-module sheaf $C_\bullet(\gg;A,A)((u))\otimes _\C \cO_\cM$, endowed with shifted degree. We take $\cM=\mathrm{Spf}(\C[[t_1,...,t_n]])$, and consider a family of OCHA structures with a fixed bulk $L_\infty$-structure over $\cM$: this is an OCHA structure of formal power series $\qq_{m,n}(t)\in C^\bullet(\gg;A,A)[[t]]$. We put $\qq(t)=\sum \qq_{m,n}(t)$. We put $b(t)$ for the Hochschild differential varying in family. Now $b+uB$ gives a section of $End_{\cO_\cM\mathrm{-mod}}^1(\cE^\bullet)$ that squares to zero. This datum allows us to define the cohomology sheaf $\cH^i(\cE^\bullet,b+uB)$ of $\cE^\bullet$. 

Recall the Cartan homotopy formula (Remark \ref{rmk_cart_hom}). Let $\i_D+uS_D=R_D$ for $D\in \Gamma(End^{|D|}(\cE^\bullet))$. We make the following definition
\begin{defn}[Proposition-Definition]
    There is an \textit{open-closed Getzler-Gauss-Manin connection} on $\cM$
    \begin{align}
         \nabla\coloneqq d-u^{-1}\sum_i R_{\p_i\qq_t}dt_i:\cE^\bullet\to \cE^\bullet\otimes \O_\cM^1
    \end{align}
    which descends to $\cH^\bullet(\cE)$.
\end{defn}
\begin{proof}
    The Leibnitz rule is clear. It suffices to check     \[[\p_i-u^{-1}\i_{\p_i\qq_t}-S_{\p_i\qq_t},b_t+uB]_\circ=0\]
    where $b_t+uB$ is regarded as a section of $End_{\cO_\cM\mathrm{-mod}}^1(\cE^\bullet)$. $\p_i$ is the conventional basis of tangent vectors. This follows from the following observation. As $B$ does not depend on the OCHA structure, it furnishes a constant section of $\cE^\bullet$ under the trivial connection $d$, so $[\p_i, b_t]_\circ=-L_{\p_i \qq(t)}$. The Cartan homotopy formula, Remark \ref{rmk_cart_hom}, implies that the only other contribution to the LHS is $L_{\p_i \qq_t}$. Indeed, taking derivatives of the OCHA relation $\qq(t)\{\qq(t)\}=\qq(t)\{\ll\}$ gives $[\qq,\p_i\qq(t)]=\hat \ll(\p_i \qq(t))$. In other words, $\d (\p_i \qq(t))=0$. 
\end{proof}
The following theorem is a generalization of \cite{Getzler2002CARTANHF}. We define the operator $\s(D,E)=R_{\{D,E\}}+(-1)^{|D||E|}R_{\{E,D\}}+R_{D\bullet E}$.
\begin{thm}[compare\cite{Getzler2002CARTANHF}, Theorem 3.3]\label{thm_GGM}
   The curvature of open-closed Getzler-Gauss-Manin connection is given by 
   \[[\nabla_{\p_i},\nabla_{\p_j}]=[b+uB,u^{-2}\s(\p_i \qq_t,\p_j\qq_t)-u^{-1}R_{\p_i\qq_t}\circ L_{\p_j\qq_t}]_\circ\]
\end{thm}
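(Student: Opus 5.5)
The plan is to follow Getzler's original argument and reduce everything to the Cartan homotopy formula of Remark \ref{rmk_cart_hom} together with a second-order ``Cartan homotopy'' identity for products of two contractions. Write $\nabla_{\p_i}=\p_i-u^{-1}R_i$ with $R_i\coloneqq R_{\p_i\qq_t}$, $L_i\coloneqq L_{\p_i\qq_t}$, and $\d_t$ the family Hochschild differential. Since $\p_i,\p_j$ commute, expanding gives
\[[\nabla_{\p_i},\nabla_{\p_j}]=-u^{-1}\bigl(R_{\p_i\p_j\qq_t}-R_{\p_j\p_i\qq_t}\bigr)+u^{-2}[R_i,R_j]_\circ=u^{-2}[R_i,R_j]_\circ ,\]
where we use $[\p_i,R_j]_\circ=R_{\p_i\p_j\qq_t}$, valid because $R_D$ depends on $\qq$ through $\i_D$. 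More precisely, $\p_i\i_{D}$ produces an extra term in which $\qq$ is replaced by $\p_i\qq_t$. I would track this term carefully: it is exactly an $\i$-type operator with two inserted cochains, namely $K$ from the proof of the cup-product proposition. So the first step is to record
\[[\p_i,R_j]_\circ=R_{\p_i\p_j\qq_t}+K(\p_i\qq_t,\p_j\qq_t)\text{-type term},\]
and to antisymmetrize it.

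The second step is to compute $[R_i,R_j]_\circ$ modulo $[b+uB,-]_\circ$. Here I would prove a chain-level identity of the form
\[[b+uB,\s(D,E)]_\circ=\s(\d D,E)\pm\s(D,\d E)+u[R_D,R_E]_\circ\text{-correction}+u^2(\dots),\]
assembled from the identities already established. The cup-product identity expresses $\i_D\i_E$ through $\i_{E\cup D}$, $[K,b]$ and $K(\d-,-)$. The proposition $[L_D,\i_E]+[b,T(D,E)]=(-1)^{|D|}\i_{[D,E]}+\dots$ handles the $L\circ\i$ terms. Finally $[B,\i_D]+[b,S_D]=L_D+S_{\d D}$ converts $B$-commutators into Lie derivatives.

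The operators $\{D,E\}$ and $D\bullet E$ in $\s$ are identified with the combinations appearing in $K$ and $T$ (namely $K\leftrightarrow R_{\{D,E\}}$ and $T\leftrightarrow R_{D\bullet E}$ up to $S$-parts). Thus $\s(D,E)$ is the symmetrized homotopy between $R_DR_E$ and $R_{D\cup E}$-type terms. Specializing to $D=\p_i\qq_t$ and $E=\p_j\qq_t$, both of which are $\d_t$-closed by the proposition-definition, kills the $\d$-terms. What remains is $[b+uB,u^{-2}\s]$ on one side and $u^{-2}[R_i,R_j]$ plus the leftover $u^{-1}$ terms on the other. These leftovers (the $L_i$-terms coming from $[uB,\i]$ and from $[\p_i,b]=-L_i$) should combine into $-u^{-1}[b+uB,R_i\circ L_j]_\circ$, using $[b+uB,R_i]=uL_i$ and $[b+uB,L_j]=0$.

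The main obstacle is the middle step: verifying the single chain-level identity for $[b+uB,\s(D,E)]$ with correct signs, including the $u^1$ and $u^2$ coefficients, which involve $[B,S]$-type and $S\circ S$ terms. I would organize it by powers of $u$. The $u^0$ part is the cup-product proposition. The $u^1$ part is the combination of the $[L,\i]$ and $[B,\i]$ identities. The $u^2$ part uses $[B,S_D]=0$ in the normalized complex. In each part the $\ll$-terms are checked to cancel exactly as in Section \ref{sec_cal}, via the word-classification sign rule. I expect the $\ll$-terms to again cancel in pairs, since every operator involved is built from braces into $\qq$ and the closed differential acts as a derivation.
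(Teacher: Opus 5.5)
Your outer skeleton matches the paper's: expand the curvature, find $\s$ with $[b+uB,\s]_\circ$ reproducing $[R_i,R_j]_\circ$, specialize to the closed cochains $\p_i\qq_t,\p_j\qq_t$, and absorb the leftover through $R_i\circ L_j$. Your $u^0$ bookkeeping via the cup-product identity (with $K\leftrightarrow\i_{\{D,E\}}$) also matches the second identity in Lemma \ref{lem_RL}. There is nevertheless a genuine gap: the step that closes the argument is misidentified, and its key input is missing.

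\textbf{The $t$-derivative term.} Differentiating $\i_{\p_j\qq_t}$ in $t_i$ replaces the \emph{outer} $\qq$ by $\p_i\qq_t$. The extra term is therefore $\r_{\{\p_i\qq_t,\p_j\qq_t\}}$, which is $T(\p_i\qq_t,\p_j\qq_t)$ up to sign: $\p_i\qq_t$ outermost with $\p_j\qq_t$ nested inside it. It is not $K(\p_i\qq_t,\p_j\qq_t)$, which keeps $\qq$ outermost with two sibling insertions. So the curvature is $u^{-1}(\r_{\{\p_i\qq_t,\p_j\qq_t\}}-\r_{\{\p_j\qq_t,\p_i\qq_t\}})+u^{-2}[R_i,R_j]_\circ$, and these bare $\r$-terms must be cancelled by something. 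Likewise $T(D,E)$ corresponds to $\r_{\{D,E\}}$, not to $R_{D\bullet E}$, which has $\qq$ outermost with $D\{E\}$ inserted.

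Nothing in your plan produces bare $\r$-terms. For closed $D,E$ the $[L_D,\i_E]$ identity only involves $[b,T(D,E)]_\circ$, never $T(D,E)$ itself. The relation $[\p_i,b]=-L_i$ plays no role, since no $b$ occurs in $[\nabla_{\p_i},\nabla_{\p_j}]$.

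\textbf{The missing lemma.} What is missing is the purely combinatorial analogue of \cite[Lemma 2.3]{Getzler2002CARTANHF}, stated at the start of the paper's proof:
\[L_D\circ L_E+\r_{\{D,E\}}+(-1)^{|D||E|}\r_{\{E,D\}}=L_{D\bullet E}+L_{\{D,E\}}+(-1)^{|D||E|}L_{\{E,D\}}.\]
It converts the order-$u$ output of $[b+uB,\s(D,E)]_\circ$ into $L_DL_E$ plus exactly those two $\r$-terms. That output consists of $L_{D\bullet E}$ from the Cartan formula and $L_{\{D,E\}}$ from the $B$-part of Lemma \ref{lem_RL}. The resulting identity is
\[[b+uB,\s(D,E)]_\circ-\s(\d D,E)-(-1)^{|D|}\s(D,\d E)+(-1)^{|D|}[R_D,R_E]_\circ=u\bigl(L_DL_E+\r_{\{D,E\}}+(-1)^{|D||E|}\r_{\{E,D\}}\bigr).\]
Here $[R_D,R_E]_\circ$ enters with coefficient $u^0$, not as a $u$-correction. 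There is no $u^2$ part, since $[B,S]_\circ=0$ and $S\circ S=0$.

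\textbf{The $\d$-terms.} Closedness of $D,E$ does not kill all $\d$-terms. By Lemma \ref{lem_bullet}, $\d(D\bullet E)=-\qq\{D,E\}-(-1)^{|D||E|}\qq\{E,D\}$. The resulting $R_{\qq\{D,E\}}$-terms must cancel those in Lemma \ref{lem_RL}, which is exactly why $R_{D\bullet E}$ belongs in $\s$.

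\textbf{The power of $u$.} Since $[b+uB,R_i]_\circ=uL_i$ and $[b+uB,L_j]_\circ=0$, you get $[b+uB,R_i\circ L_j]_\circ=uL_iL_j$. The leftover $-u^{-1}L_iL_j$ is therefore $-u^{-2}[b+uB,R_i\circ L_j]_\circ$, not $-u^{-1}[\cdots]$. As a check, take the symmetric part in $i,j$ of the right-hand side, using $\s(D,E)+\s(E,D)=R_{[D,E]}$ for odd $D,E$ and $L_iL_j+L_jL_i=L_{[\p_i\qq_t,\p_j\qq_t]}$. It vanishes only with $u^{-2}$, as it must because the left-hand side is antisymmetric. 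So the exponent in the displayed target, and in the last line of the paper's proof, should be read as $u^{-2}$.
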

\begin{proof}
    We define the following operators
    \begin{align}
        \iota_{\{D,E\}}(y_{[a]};x_{[d]})&\coloneqq\sum(-1)^{\k+|D|+|E|} y_{I_1}\otimes \qq(y_{I_2};x_{J_7}\otimes x_{J_1}\otimes D(y_{I_3};x_{J_2})\otimes x_{J_3}\otimes E(y_{I_4};x_{J_4})\otimes x_{J_5})\otimes x_{J_6}\\
        S_{\{D,E\}}(y_{[a]};x_{[d]})&\coloneqq \sum(-1)^{\k+|D|+|E|}y_{I_1}\otimes \one \otimes x_{J_6}\otimes x_{J_1}\otimes D(y_{I_2};x_{J_2})\otimes x_{J_3}\otimes E(y_{I_3};x_{J_4})\otimes x_{J_5}\\
    L_{\{D,E\}}&\coloneqq\sum (-1)^{\k+|D|+|E|} y_{I_1}\otimes x_{J_1}\otimes D(y_{I_2};x_{J_2})\otimes x_{J_3}\otimes E(y_{I_3};x_{J_4})\otimes x_{J_5} \\
    &+\sum (-1)^{\k+|D||E|+|D|+|E|}y_{I_1}\otimes E(y_{I_2};x_{J_5}\otimes x_{J_1}\otimes D(y_{I_3};x_{J_2})\otimes x_{J_3})\otimes x_{J_4}\notag \\
    &+\sum (-1)^{\k+|D||E|+|D|+|E|}y_{I_1}\otimes E(y_{I_2};x_{J_6}\otimes x_{J_1})\otimes x_{J_2}\otimes D(y_{I_3};x_{J_3})\otimes x_{J_4})\otimes x_{J_5} \notag\\
    \r_{\{D,E\}}&\coloneqq \sum (-1)^{|D|+|E|+\k}y_{I_1}\otimes D(y_{I_2};x_{J_5}\otimes x_{J_1}\otimes E(y_{I_3};x_{J_2})\otimes x_{J_3})\otimes x_{J_4}
\end{align}
Direct computations give the following
\begin{lem}[compare \cite{Getzler2002CARTANHF}, Lemma 2.3]
    \[L_D\circ L_E+\r_{\{D,E\}}+(-1)^{|D||E|}\r_{\{E,D\}}=L_{D\bullet E}+L_{\{D,E\}}+(-1)^{|D||E|}L_{\{E,D\}}\]
\end{lem}
Recall that $\d D=\qq\bullet D-(-1)^{|D|}D\bullet \qq+(-1)^{|D|}\hat \ll(D)$. 
\begin{lem}[compare \cite{Getzler2002CARTANHF}, Lemma 1.4] \label{lem_bullet}
    \[(\d D) \bullet E-\d(D \bullet E)+(-1)^{|D|}D\bullet (\d E)=\qq\{D,E\}+(-1)^{|D||E|}\qq\{E,D\}\]
\end{lem}
Define $\i_{\{D,E\}}+uS_{\{D,E\}}=R_{\{D,E\}}$, we have
\begin{lem}\label{lem_RL}
  \[[b+uB,R_{\{D,E\}}]_\circ- R_{\{\d D,E\}}-(-1)^{|D|}R_{\{D,\d E\}}+(-1)^{|D||E|+|E|}R_E\circ R_D-R_{\qq\{D,E\}}=uL_{\{D,E\}}\]
\end{lem}
\begin{proof}
    It suffices to show that 
\[[B,\i_{\{D,E\}}]_\circ+[b,S_{\{D,E\}}]_\circ+(-1)^{|D||E|+|E|}(S_{E}\circ \i_{D}+\i_E\circ S_D)+S_{\qq\{D,E\}}-S_{\{\d D, E\}}-(-1)^{|D|}S_{\{D,\d E\}}=L_{\{D,E\}}\]
and 
\[[b,\i_{\{D,E\}}]_\circ+(-1)\i_{\{\d D,E\}}+(-1)^{|D|+1}\i_{\{D,\d E\}}+(-1)\i_{\qq\{D,E\}}+(-1)^{|E||D|+|E|}\i_{E}\circ \i_D=0\]
The first equality is true tautologically, though we omit details. The second equality is verified using the OCHA relation. The statement follows, since clearly $[B,S]_\circ=0$, $S\circ S=0$.
\end{proof}
The curvature of Getzler-Gauss-Manin connection is given by 
\begin{align}
    [\p_{i}-u^{-1}R_{\p_i \qq_t},\p_j-u^{-1}R_{\p_i \qq_t}]_\circ=-u^{-1}R_{\p_i\p_j \qq_t}+u^{-1}R_{\p_j\p_i \qq_t}+u^{-1}\r_{\{\p_i\qq_t,\p_j\qq_t\}}-u^{-1}\r_{\{\p_j\qq_t,\p_i\qq_t\}}+u^{-2}[R_{\p_i\qq_t},R_{\p_j\qq_t}]_\circ \label{curv4.9}
\end{align}
    The first two terms on the RHS cancel. We have used that $\p_i R_{D_t}=R_{\p_i D_t}-u^{-1}\r_{\{\p_i\qq_t,D\}}$. 
    
    From Lemma \ref{lem_RL} we already see that 
\[[b+uB,R_{\{D,E\}}]_\circ- R_{\{\d D,E\}}-(-1)^{|D|}R_{\{D,\d E\}}+(-1)^{|D||E|+|E|}R_E\circ R_D-R_{\qq\{D,E\}}=uL_{\{D,E\}}\]
Similarly,
\[[b+uB,R_{\{E,D\}}]_\circ- R_{\{\d E,D\}}-(-1)^{|E|}R_{\{E,\d D\}}+(-1)^{|D||E|+|D|}R_D\circ R_E-R_{\qq\{E,D\}}=uL_{\{E,D\}}\]
Hence let $\s_0(D,E)=R_{\{D,E\}}+(-1)^{|E||D|}R_{\{E,D\}}$ we have 
\begin{align}
    [b+uB,\s_0(D,E)]_\circ-\s_0(\d D,E)-(-1)^{|D|}\s_0(D,\d E)+&(-1)^{|D|}[R_D,R_E]_\circ-R_{\qq\{D,E\}}-(-1)^{|E||D|}R_{\qq\{E,D\}}\notag \\
     &=uL_{\{D,E\}}+u(-1)^{|D||E|}L_{\{E,D\}}\label{s_04.9}
\end{align}
By Remark \ref{rmk_cart_hom} and Lemma \ref{lem_bullet}, 

\begin{align}
    [b+uB,R_{D\bullet E}]_\circ-R_{(\d  D)\bullet E}-(-1)^{|D|}R_{D\bullet (\d E)}+R_{\qq\{D,E\}}+(-1)^{|D||E|}R_{\qq\{E,D\}}=uL_{D\bullet E}\label{RL4.10}
\end{align}
Put $\s(D,E)=R_{\{D,E\}}+(-1)^{|D||E|}R_{\{E,D\}}+R_{D\bullet E}$. Combining \eqref{s_04.9} and \eqref{RL4.10} leads to the following
\begin{align}
    [b+uB,\s(D,E)]_\circ-\s(\d D,E)-(-1)^{|D|}\s(D,\d E)+(-1)^{|D|}[R_D,R_E]_\circ=u\left(L_D\circ L_E+\r_{\{D,E\}}+(-1)^{|D||E|}\r_{\{E,D\}}\right) \label{sigma4.11}
\end{align}
In particular, let $D=\p_i \qq_t$, $E=\p_j \qq_t$ which are closed under the open-closed Hochschild differential $\d$. Substituting \eqref{sigma4.11}, the RHS of \eqref{curv4.9} equals 
\[u^{-2}[b+uB,\s(\p_i \qq_t,\p_j\qq_t)]_\circ-u^{-1}L_{\p_i \qq_t}\circ L_{\p_j \qq_t}\]
Now using the fact that $L_{\p_i \qq_t}\circ L_{\p_j \qq_t}=[b+uB,R_{\p_i\qq_t}\circ L_{\p_j\qq_t}]_\circ$, the proof is complete.
\end{proof}
\begin{rmk}
    The $L_{\{D,E\}}$ operator in the proof is different from the one defined in \cite{Getzler2002CARTANHF}. The reason is basically conventional. In the operators $S$ and $\i$ we always put $x_0$ in front of $D$ and $E$, while Getzler puts $x_0$ behind $D$ and $E$. However as all statements involve only $L_{\{D,E\}}+(-1)^{|D||E|}L_{\{E,D\}}$, this distinction is harmless even in comparing with the literature.
\end{rmk}

	\bibliographystyle{alpha}
	\addtocontents{toc}{\protect\setcounter{tocdepth}{0}}
	\bibliography{myref}		

@article{keller2001introductionainfinityalgebrasmodules,
      title={Introduction to {$A_\infty$} algebras and modules}, 
      author={Bernhard Keller},
      year={2001},
      eprint={math/9910179},
      archivePrefix={arXiv},
      primaryClass={math.RA},
      url={https://arxiv.org/abs/math/9910179}, 
}

@article{Fukaya_toric1,
   title={Lagrangian {Floer} theory on compact toric manifolds, I},
   volume={151},
   ISSN={0012-7094},
   url={http://dx.doi.org/10.1215/00127094-2009-062},
   DOI={10.1215/00127094-2009-062},
   number={1},
   journal={Duke Mathematical Journal},
   publisher={Duke University Press},
   author={Fukaya, Kenji and Oh, Yong-Geun and Ohta, Hiroshi and Ono, Kaoru},
   year={2010},
   month=jan }

@article{fukaya_toric2,
      title={Lagrangian {Floer} theory on compact toric manifolds II : {Bulk} deformations}, 
      author={Kenji Fukaya and Yong-Geun Oh and Hiroshi Ohta and Kaoru Ono},
      year={2011},
      eprint={0810.5654},
      archivePrefix={arXiv},
      primaryClass={math.SG},
      url={https://arxiv.org/abs/0810.5654}, 
}

@article{fukaya_ms_compacttoric,
      title={Lagrangian {Floer} theory and mirror symmetry on compact toric manifolds}, 
      author={Kenji Fukaya and Yong-Geun Oh and Hiroshi Ohta and Kaoru Ono},
      year={2016},
      eprint={1009.1648},
      archivePrefix={arXiv},
      primaryClass={math.SG},
      url={https://arxiv.org/abs/1009.1648}, 
}

@article{HochschildKostantRosenberg1962,
  title={Differential Forms on Regular Affine Algebras},
  author={Hochschild, G. and Kostant, Bertram and Rosenberg, Alex},
  journal={Transactions of the American Mathematical Society},
  volume={102},
  number={3},
  pages={383-408},
  year={1962},
  publisher={American Mathematical Society},
  doi={10.1090/S0002-9947-1962-0142598-8},
  url={https://doi.org}
}

@book{FOOO,
title={Lagrangian intersection {Floer} theory: anomaly and obstruction I and II},
author={Kenji Fukaya and Yong-Geun Oh and Hiroshi Ohta and Kaoru Ono},
series={AMS/IP Studies in Advanced Mathematics},
publisher={Amer. Math. Soc. and International Press},
volume={46},
year={2009},
}

@article{wang2025openclosedhochschildhomologyrelative,
      title={Open-Closed {Hochschild} Homology and the Relative Disk Mapping Space}, 
      author={Yi Wang and Hang Yuan},
      year={2025},
      eprint={2511.05010},
      archivePrefix={arXiv},
      primaryClass={math.AT},
      url={https://arxiv.org/abs/2511.05010}, 
}

@article{yuan2025cyclicbracerelationbv,
      title={Cyclic brace relation and {BV} structure on open-closed Hochschild cohomology}, 
      author={Hang Yuan},
      year={2025},
      eprint={2511.04095},
      archivePrefix={arXiv},
      primaryClass={math.QA},
      url={https://arxiv.org/abs/2511.04095}, 
}

@article{chen2022note,
  author    = {Chen, Youming and Lyu, Weiguo and Yang, Song},
  title     = {A note on the differential calculus of {Hochschild} theory for $A_{\infty}$-algebras},
  journal   = {Electronic Research Archive},
  volume    = {30},
  number    = {9},
  pages     = {3211-3237},
  year      = {2022},
  doi       = {10.3934/era.2022163},
  url       = {https://doi.org}
}

@article{Kajiura_2006homalg,
   title={Homotopy Algebras Inspired by Classical Open-Closed String Field Theory},
   volume={263},
   ISSN={1432-0916},
   url={http://dx.doi.org/10.1007/s00220-006-1539-2},
   DOI={10.1007/s00220-006-1539-2},
   number={3},
   journal={Communications in Mathematical Physics},
   publisher={Springer Science and Business Media LLC},
   author={Kajiura, Hiroshige and Stasheff, Jim},
   year={2006},
   month=Mar, pages={553-581} }

@inproceedings{Getzler2002CARTANHF,
  title={CARTAN HOMOTOPY FORMULAS AND THE {Gauss-Manin} CONNECTION IN CYCLIC HOMOLOGY},
  author={Ezra Getzler},
  year={2002},
  url={https://api.semanticscholar.org/CorpusID:32085481}
}

@article{Rinehart1963,
  title = {Differential forms on general commutative algebras},
  author = {Rinehart, George S.},
  journal = {Transactions of the American Mathematical Society},
  volume = {108},
  number = {2},
  pages = {195-222},
  year = {1963},
  publisher = {American Mathematical Society},
  doi = {10.1090/S0002-9947-1963-0154331-5},
  url = {https://ams.org}
}

@book{loday1992cyclic,
  title={Cyclic Homology},
  author={Loday, J.L.},
  isbn={9783540533399},
  lccn={92034146},
  series={Die Grundlehren der mathematischen Wissenschaften in Einzeldarstellungen},
  url={https://books.google.com.hk/books?id=gFHvAAAAMAAJ},
  year={1992},
  publisher={Springer-Verlag}
}

@article{Sheridan_2019,
   title={Formulae in noncommutative {Hodge} theory},
   volume={15},
   ISSN={1512-2891},
   url={http://dx.doi.org/10.1007/s40062-019-00251-2},
   DOI={10.1007/s40062-019-00251-2},
   number={1},
   journal={Journal of Homotopy and Related Structures},
   publisher={Springer Science and Business Media LLC},
   author={Sheridan, Nick},
   year={2019},
   month=Nov, pages={249-299} }

@article{GelfandDaletskiiTsygan1990,
  author     = {Gelfand, Israel M. and Daletskii, Yuri L. and Tsygan, Boris L.},
  title     = {On a Variant of Noncommutative Differential Geometry},
  journal   = {Soviet Mathematics - Doklady},
  volume    = {40},
  number    = {2},
  pages     = {422-426},
  year      = {1990},
  publisher = {American Mathematical Society}
}

@article{Kajiura_2006ocha,
   title={Open-closed homotopy algebra in mathematical physics},
   volume={47},
   ISSN={1089-7658},
   url={http://dx.doi.org/10.1063/1.2171524},
   DOI={10.1063/1.2171524},
   number={2},
   journal={Journal of Mathematical Physics},
   publisher={AIP Publishing},
   author={Kajiura, Hiroshige and Stasheff, Jim},
   year={2006},
   month=Feb }

@Inbook{Tsygan2004,
author="Tsygan, Boris",
title="Cyclic Homology",
bookTitle="Cyclic Homology in Non-Commutative Geometry",
year="2004",
publisher="Springer Berlin Heidelberg",
address="Berlin, Heidelberg",
pages="73-113",
isbn="978-3-662-06444-3",
doi="10.1007/978-3-662-06444-3_2",
url="https://doi.org/10.1007/978-3-662-06444-3_2"
}

@incollection{tamarkintsy2005ring,
  author    = {Tamarkin, Dmitri and Tsygan, Boris},
  title     = {The ring of differential operators on forms in noncommutative calculus},
  booktitle = {Graphs and patterns in mathematics and theoretical physics},
  series    = {Proceedings of Symposia in Pure Mathematics},
  volume    = {73},
  pages     = {105-131},
  publisher = {American Mathematical Society},
  year      = {2005},
  doi       = {10.1090/pspum/073/2131013}
}

@inproceedings{Weibel1960AnIT,
  title={An Introduction to Homological Algebra: References},
  author={Charles A. Weibel},
  year={1960},
  url={https://api.semanticscholar.org/CorpusID:116651935}
}

@article{yuan2024openclosedstringanaloguehochschild,
      title={An open-closed string analogue of {Hochschild} cohomology}, 
      author={Hang Yuan},
      year={2024},
      eprint={2410.20888},
      archivePrefix={arXiv},
      primaryClass={math.QA},
      url={https://arxiv.org/abs/2410.20888}, 
}
	\addtocontents{toc}{\protect\setcounter{tocdepth}{2}}
	
\end{document}